\mathchardef\ordinarycolon\mathcode`\:
\renewcommand*\env@matrix[1][\arraystretch]{%
	\edef\arraystretch{#1}%
	\hskip -\arraycolsep
	\let\@ifnextchar\new@ifnextchar
	\array{*\c@MaxMatrixCols c}}
\newtheorem{theorem}{Theorem}[section]
\newtheorem{corollary}{Corollary}[theorem]
\newtheorem{lemma}[theorem]{Lemma}
\theoremstyle{definition}
\newtheorem{definition}[theorem]{Definition}
\newtheorem{remark}[theorem]{Remark}
\newtheorem{exam}[theorem]{Example}
\newcommand{\F}{\mathcal{F}}
\newcommand{\G}{\mathcal{G}}
\newcommand{\cP}{\mathcal{P}}
\newcommand{\B}{\mathcal{B}}
\title[]{On Escape rate for subshift with Markov measure}
\author[Nikita Agarwal]{Nikita Agarwal}
\address{Department of Mathematics\\
	Indian Institute of Science Education and Research Bhopal\\
	Bhopal Bypass Road, Bhauri \\
	Bhopal 462 066, Madhya Pradesh\\
	India}
\email{nagarwal@iiserb.ac.in}
\author[Haritha Cheriyath]{Haritha Cheriyath}
\address{Centro de Modelamiento Matemático (CNRS IRL2807)\\Universidad de Chile\\Santiago, Chile}
\email{harithacheriyath@gmail.com}
\author[Sharvari Neetin Tikekar]{Sharvari Neetin Tikekar}
\address{School of Mathematics\\
	Tata Institute of Fundamental Research\\
	Mumbai 400 005, Maharashtra\\
	India}
\email{sharvari.tikekar@gmail.com}
\date{\today}
\begin{document} 
	\maketitle

	\begin{abstract} 
	In this paper, we present a precise formula to compute the escape rate into a hole in a subshift of finite type endowed with any Markov measure. The hole considered here is a set of sequences that do not begin with any of the words from a given finite collection. We obtain the escape rate directly in terms of the spectral radius of a perturbed stochastic matrix, where the perturbation rule is determined by the hole. By exploiting the combinatorial nature of the space under consideration and using the method of recurrence relations, we also establish that the escape rate into a hole is the logarithm of the smallest real pole of certain rational function. Both methods have their own merits, which are illustrated through examples.
	\end{abstract}
	
	\noindent \textbf{Keywords}: Open dynamical systems, symbolic dynamics, escape rate, Markov measure, subshift of finite type, weighted correlation polynomial, generating functions \\
	\noindent \textbf{2020 Mathematics Subject Classification}: 37B10 (Primary); 37A05, 05C50 (Secondary)

		\section{Introduction}
		The study of open dynamical systems, also known as dynamical systems with a hole, originated in 1979 in the work of Pianigiani and Yorke \cite{PY}. It is currently an active subbranch of dynamical systems and ergodic theory. 
		% The motivation for their work was to study the trajectories of particles on a billiard table which has a small hole made in it. This question can be asked in the context of any dynamical space, be it complex, topological, or measure-theoretic.
		
		In a probability space $(X,\mathcal{B},\mu)$ with a measure preserving transformation $T$ on $X$, a hole $H$ is a measurable subset of $X$ having a positive measure. The orbit of a point $x \in X$ is said to \textit{escape into the hole $H$} if $T^k(x) \in H$ for some $k \ge 0$. That is, the orbit of $x$ intersects $H$. The {\it open system} is defined as $T|_{X \setminus H}: X \setminus H \rightarrow X$, and the original system $T : X \rightarrow X$ is referred to as the {\it closed system}. The measure of the set of points in $X$ which do not escape into the hole until time $n$ is called the {\it survival probability at time $n$}. The exponential rate of decay of these survival probabilities as $n \rightarrow \infty$ is known as the {\it escape rate into the hole $H$}. 
		
		Dynamical systems with holes represent systems on subsets of $X$, which are not invariant under the dynamics. Although it is a fairly recent area, some significant progress has been made towards the study of a variety of systems with holes. To cite a few, uniformly expanding or hyperbolic systems, including Smale horseshoes, Anosov diffeomorphisms~\cite{CMS:94,PY,Cencova} {\it etc.}, and also certain non-uniformly or weakly hyperbolic systems such as logistic maps~\cite{Demers:05} have been explored. In the probabilistic setting specifically, the finite and countable state Markov chains with holes~\cite{CMS:97,DIMMY} have been investigated. Many of these systems are conjugate to subshifts of finite type (SFTs). For example, it is well-known that certain piecewise linear maps and logistic maps on the unit interval are isomorphic to a full shift on an appropriate symbol set. Studying the dynamics via this conjugacy has proven to be a very powerful tool; hence, it makes sense to pose relevant questions in a more general framework of symbolic dynamics, which is the main theme of this paper.
		
		One of the natural questions that arises in the study of open dynamical systems is how the escape rate into a hole depends on the location and the size or measure of the hole, and to effectively classify holes according to certain parameters such as their measure, and identifying hole(s) of maximal escape rate in a given class. For the full shift over finite symbols with a single cylinder as the hole, this question was addressed by Bunimovich and Yurchenko in~\cite{BY} when the shift is equipped with the uniform probability measure. In general, if a hole is a finite union of cylinder sets in an irreducible SFT with the Parry measure, then the first two authors of this paper, in~\cite{HA:19}, provide a formula for escape rate into a hole as the difference between the topological entropy of the original SFT and that of the SFT consisting of sequences of the original SFT whose orbit under the left shift map does not intersect the hole. {\it The Parry measure} on an irreducible SFT is the unique invariant Markov measure of maximal entropy. Moreover, the authors compare escape rates into various holes having the same measure. In a follow-up work~\cite{HA:23}, they provide conditions on the size of the symbol set to obtain holes with large escape rates and establish a relationship between the escape rate and the minimum period of the hole. Their work focuses on the SFT with the Parry measure and a Markov hole. 
		
		Recently, in~\cite{BCL}, Bonanno {\it et. al.} consider SFTs with a single cylinder based at a word of finite length as a hole. They provide a complete characterization of such holes having maximal escape rate, when the SFT is equipped with a product measure. Comparing escape rates into different cylinders based at words of the same length, the authors prove that the escape rate is maximum either into a cylinder based at a prime word or a cylinder of maximum measure. However, the authors mention that the problem becomes much more intricate when the SFT is induced with a general Markov measure instead of the product measure. They discuss some results in the setting of Markov measure, however, only for shifts over the symbol set of size 2. Their work also sheds light on some difficulties in obtaining a precise expression for the escape rate into a Markov hole when the SFT is induced with a general Markov measure.
		
		In our work, we address the most general situation where the SFT is equipped with any Markov measure, and the hole is a finite union of cylinders. Our results subsume the two cases studied by Bonanno {\it et. al.}~\cite{BCL} for single cylinder as a hole in an SFT, first, when the SFT is equipped with product measure, and second, the SFT is on two symbols equipped with any Markov measure. We obtain an explicit formula for the escape rate in terms of the spectral radius of a certain matrix, which can be computed using higher block presentations of the shift and the stochastic matrix giving the Markov measure. This formula generalizes the one obtained in~\cite{HA:19} when the SFT is equipped with the Parry measure.  We also shed 
		light on some possible connections of our work with the work on perturbations of subshifts by Lind~\cite{Lind_pert} and Ramsey~\cite{Ramsey}, later in the paper. Due to the use of combinatorial techniques developed by Guibas and Odlyzko~\cite{Guibas}, the results bring out interesting observations on how the overlapping of words associated with the cylinders constituting the hole affects the escape rate.

		\subsection{Organization and Summary of main results}
		In Section \ref{sec:prelim}, we discuss preliminaries on subshifts of finite type in detail. Here, we only mention some of the necessary facts to the extent that help us state our main results in brief. 
		
		Let $\Sigma$ be a finite symbol set and $A$ be a $0-1$ matrix indexed by $\Sigma$ (we say that a \textit{matrix is indexed by a set} if its rows and columns are indexed by the elements of that set). The {\it subshift of finite type} (SFT) associated with $A$ is defined as the collection of sequences $\Sigma_A:= \left\lbrace x = (x_n)_{n \ge 1} \in \Sigma^{\mathbb{N}} \ | \ A_{x_ix_{i+1}} = 1, \ i \ge 1 \right\rbrace$. An \textit{allowed word} $u$ in $\Sigma_A$ is a finite string of symbols from $\Sigma$ which appears as a part of some sequence in $\Sigma_A$. A \textit{cylinder} based at an allowed word $u$, denoted by $C_u$, is defined as the set of all sequences in $\Sigma_A$ which begin with $u$. Let $\mu_P$ be the Markov measure on $\Sigma_A$ given by a stochastic matrix $P$.     
		
		This paper focuses on studying the escape rate into a Markov hole in $\Sigma_A$. By a Markov hole $H$, we mean a finite union of cylinders, that is, a hole $H=\cup_{u \in \G} C_u$, determined by a finite collection $\G$ of allowed words in $\Sigma_A$. In order to deal with the words of length larger than 2, we switch to the higher block representations of $\Sigma_A$ and $P$, as defined in Section~\ref{subsec:higher_block}. To avoid getting into the intricate nitty-gritty of the techniques involved, at this stage, let us assume that each word in $\G$ has length $2$. However, in this paper, we prove the results for when $\G$ consists of words of different lengths. Let $B$ (with size same as that of $A$, which is nothing but $|\Sigma|$) denote a $0-1$ matrix defined as follows: $B_{ij} = 0$, if and only if, either $A_{ij} = 0$, or the word $ij \in \G$. Note that $B$ is the adjacency matrix of the SFT consisting of those sequences in $\Sigma_A$ which do not contain words from $\G$. Let $\rho(H)$ denote the escape rate  into the hole $H$.

		% $r-1$-block representation of $\Sigma_A$, denoted by $\Sigma_{A_{r-1}}$, where the associated matrix $A_{r-1}$ is indexed by allowed words of length $r-1$, and the $r-1$-block representation of the stochastic matrix denoted by $P_{r-1}$ as defined in Section \eqref{sec:prelim}. For the purpose of this section, we lose the subscript $r-1$, as it will be clear from the context whether we are using the higher block representation. 
		
		In Section \ref{sec:esc_rate_form}, we obtain a precise formula for the escape rate into a Markov hole as stated below.
		\begin{enumerate}
			\item[(A)] {\bf Formula for the escape rate} (Theorem~\ref{thm:erformula}): The escape rate is given by $\rho(H) = -\ln \lambda(B\circ P)$, where $B \circ P$ is the Hadamard product or element-wise product of the matrices $B$ and $P$ and
			$\lambda(B \circ P)$ denotes its spectral radius. Moreover, $\rho(H)>0$.
		\end{enumerate}
		
		\noindent This formula can also be expressed as the difference between the topological pressure of certain potential on the original shift $\Sigma_A$ and that of the potential restricted to the new shift $\Sigma_{B}$, see Theorem \ref{thm:erate_pressure}. This formulation of escape rate generalizes the escape rate formula (as a difference of topological entropies) with respect to the Parry measure given in~\cite{HA:19}. See Section~\ref{subsec:er_and_pressure} for the concepts of pressure, potential {\it etc.} We further obtain an upper bound on the escape rate into a hole, which is a single cylinder, as follows:
		\begin{enumerate}
			\item[(B)] {\bf An upper bound on the escape rate when $H$ is a single cylinder} (Theorem~\ref{thm:location}): If $P$ has a positive real eigenvalue other than 1, and $\theta$ is the second largest real eigenvalue of $P$, then $\rho(H) \in (0, \ln \theta^{-1}]$.
		\end{enumerate}
		
		In Section~\ref{sec:rec_relations}, we obtain certain recurrence relations by using the weighted correlation polynomials of the words in $\G$, which gives another method to compute the escape rate. The techniques used to arrive at the recurrence relations are inspired by Guibas and Odlyzko~\cite{Combinatorial}. 
		\begin{enumerate}
			\item[(C1)] {\bf Escape rate as a logarithm of a pole of certain rational function} (Theorems~\ref{thm:esc_rate_as_smallest_pole} and~\ref{thm:rec_relations}): The escape rate into the Markov hole $H$ is the logarithm of the smallest positive real pole of certain rational function.
			\item[(C2)] {\bf Escape rate in terms of a root of a polynomial when $H$ is a single cylinder} (Theorem~\ref{thm:erone}): If $\G=\{u\}$ then the escape rate $\rho(H)$ is the logarithm of the smallest positive real root of certain polynomial, which is a function of the weighted correlation polynomial of $u$ and the stochastic matrix $P$.
		\end{enumerate}
		
		We also establish in Section~\ref{sec:rec_relations}, that the results in Bonanno~\textit{et. al.}~\cite{BCL} are special cases of our results. The techniques used in this paper are combinatorial in nature since we are dealing with words and sequences over a finite symbol set. In Section~\ref{sec:advantages}, we discuss the limitations and advantages of both the methods of computations of escape rate, (A) and (C1, C2). We conclude the paper with some numerical observations and derived conjectures. 
		
		%Below we give a brief outline of our main results.
		%Following is a simple outline of our results.
		%\begin{enumerate}[label=(\Alph*)]
		%\item {\bf Formula for escape rate:} The escape rate $\rho(H) = -\ln \lambda(B\circ P)$, where $B \circ P$ denotes the Hadamard product or element-wise product of $B$ and $P$, and $\lambda(\dot)$ denotes the spectral radius of the relevant matrix.  
		%\item {\bf Bounds on the escape rate when $H$ is a single cylinder:} Suppose $\G$ consists only of a single word, then $H$ is a cylinder. If $\theta$ is the second largest positive eigenvalue of $P$, then $\rho(H) \in (1, \theta^{-1}]$. Moreover, if $P$ has no positive eigenvalue other than $1$, then $\rho(H) \in (1, \infty)$.
		%\item {\bf Escape rate as a pole of certain rational function:} For given $\G$, the escape rate into a the hole $H$ is obtained as the smallest pole of certain rational function as defined in Theorem \eqref{thm:esc_rate_as_smallest_pole}. Moreover, in the special case when $H$ is a cylinder based at a single word $u$, $\rho(H)$ is the log of the smallest real real root of a certain polynomial. As stated in Theorem , this polynomial depends on the autocorrelation polynomial of $u$, the measure $\mu$ and the stochastic matrix $P$. 
		%\end{enumerate}

		\section{Preliminaries on symbolic dynamics}\label{sec:prelim}
		Let $\Sigma=\{1,2,\dots,N\}$ whose elements are known as \textit{symbols}. The set $\Sigma^{\mathbb{N}}$ consists of all one-sided sequences on $\Sigma$. For $\ell\ge 1$, each element of $\Sigma^\ell$ is known as a \textit{word} with length $\ell$ on $\Sigma$. A word $w=w_1w_2\dots w_n$ on $\Sigma$ is a \textit{subword} of a word $u=u_1u_2\dots u_k$ on $\Sigma$ if $k\ge n$ and there exists a $0\le j\le k-n$ such that $w_i=u_{j+i}$ for all $1\le i\le n$.
		A word $w=w_1w_2\dots w_n$ on $\Sigma$ \textit{appears in a sequence} $x=x_1x_2\dots\in\Sigma^\mathbb{N}$ if there exists $j\ge 1$ such that $w_i=x_{j+i}$ for all $1\le i\le n$. The function $\sigma:\Sigma^{\mathbb{N}}\to\Sigma^{\mathbb{N}}$ defined as $\sigma((x_n)_{n\in\mathbb{N}})=(x_{n+1})_{n\in\mathbb{N}}$, is known as the \textit{left shift map}. The set $\Sigma^\mathbb{N}$ is a compact metric space, and each $\sigma$-invariant closed subspace of $\Sigma^\mathbb{N}$ is called a \emph{subshift}. If a word appears in a sequence in a subshift, it is said to be an \emph{allowed word}. A word that is not allowed is said to be a \emph{forbidden word}. 
		
		Let $A$ be a matrix with entries 0 or 1, indexed by $\Sigma$. A \emph{subshift of finite type (SFT) with respect to $A$}, denoted by $\Sigma_A$, is defined as the collection of sequences $x=(x_n)_{n\ge 1}\in \Sigma^\mathbb{N}$ such that $A_{x_n x_{n+1}}=1$, for all $n\ge 1$. The matrix $A$ associated with the SFT $\Sigma_{A}$ is called as the {\it adjacency matrix} of $\Sigma_A$. It is well-known that $\Sigma_A$ is closed and invariant under $\sigma$ and hence is a subshift. Note that $\Sigma_A$ consists of all sequences in $\Sigma^\mathbb{N}$ in which words of the type $ij$ for which $A_{ij}=0$ do not appear. In other words, $\Sigma_A$ is completely described by a finite set of words of length two that are forbidden.  We say that a non-negative matrix $A$ is \emph{irreducible}, if for $i,j\in\Sigma$, if there exists a $n=n(i,j)>0$ such that $A^n_{ij}>0$. The SFT $\Sigma_A$ is said to be \emph{irreducible} if $A$ is irreducible. The SFT $\Sigma_A$ can be interpreted in terms of certain graph. Let $G_A=(V_A,E_A)$ be the directed graph with adjacency matrix $A$, where $V_A=\Sigma$ is the set of vertices, and $E_A$ is the set of all edges. An edge from a vertex $i$ to $j$ is labeled as $ij$, and $ij\in E_A$ if only if $A_{ij}=1$. Thus, a one-to-one correspondence exists between $\Sigma_A$ and one-sided infinite paths on $G_A$.  
		
		Let $\Sigma_A$ be an SFT. For each $n\ge 1$, let $\mathcal{L}_n$ denote the collection of all allowed words of length $n$ in the subshift $\Sigma_A$. 
		%That is, 
		%	\[
		%	\mathcal{L}_n = \{w_1w_2\dots w_n \ : \ w_1,\dots,w_n\in\Sigma,\ A_{w_iw_{i+1}}=1,\ \text{for } 1\le i\le n-1\}.
		%	\]
		For $w\in \mathcal{L}_n$, the collection of all sequences in $\Sigma_A$ beginning with the word $w$ is known as a \emph{cylinder}, denoted by $C_w$. Further, for $X=x_1\dots x_n$, $Y=y_1\dots y_n\in\mathcal{L}_n$, two words of length $n$, with $x_2\dots x_{n}= y_1\dots y_{n-1}$, we define a word $X*Y$ of length $n+1$ as $X*Y:=x_1x_2\dots x_ny_{n}$. In such case, we say that $X$ and $Y$ \emph{overlap progressively}. For $n=1$, we define $X*Y=x_1y_1$.
		
		\subsection{Higher block representation of $\Sigma_A$}\label{subsec:higher_block}
		For $n\ge 1$, define a new subshift $\Sigma_A^{(n)}=\Sigma_{A_n}$, where $A_n$ is a $0-1$ matrix with rows and columns indexed by $\mathcal{L}_{n}$ (allowed words of length $n$ in $\Sigma_A$) given as follows: Set $A_1=A$ and $\Sigma_A^{(1)}=\Sigma_{A}$. Further, if $n \ge 2$, then for $X=x_1\dots x_{n}$ and $Y=y_1\dots y_{n}\in \mathcal{L}_{n}$,
		\[
		(A_n)_{XY} :=\begin{cases}
			1, & \text{ if }  x_2\dots x_{n}= y_1\dots y_{n-1} \text{ and } X*Y\in \mathcal{L}_{n+1},\\
			0, & \text{otherwise}.
		\end{cases}
		\]
		
		There is a one-to-one correspondence between the original subshift $\Sigma_A$ and the new subshift $\Sigma_{A}^{(n)}$ via  $x_1x_2x_3\dots \ \leftrightarrow \ (x_1\dots x_{n})(x_2\dots x_{n+1})(x_3\dots x_{n+2})\dots$. 
		
		The shift $\Sigma_{A_n}$ is known as the \textit{higher block representation of $\Sigma_A$ of order $n$}. Let $G_n = (V_n, E_n)$ denote the graph with adjacency matrix $A_n$. Here $V_n = \mathcal{L}_n$ and an edge from the vertex $X$ to $Y$, whenever it exists, is labeled as $X*Y \in \mathcal{L}_{n+1}$. The following example illustrates the higher block representation of an SFT.
		
		\begin{exam}
			Consider a subshift $\Sigma_A$ with $\Sigma=\{1,2\}$ and $A=\begin{pmatrix}
				0 & 1\\ 1 & 1
			\end{pmatrix}$. Take for instance $n=3$. The higher block presentation $\Sigma^{(3)}_A=\Sigma_{A_3}$ of $\Sigma_A$ of order 3 is given as follows. The set of words of length 3 that appear in $\Sigma_A$ is $\mathcal{L}_3=\{121,122,212,221,222\}$. Hence the matrix $A_3$ indexed by elements of $\mathcal{L}_3$ is given by\renewcommand{\kbldelim}{(}% Left delimiter
			\renewcommand{\kbrdelim}{)}% Right delimiter
			\[ 
			A_3 =  \kbordermatrix{
				&  121 & 122 & 212 & 221& 222\\
				121&  0 & 0 & 1 & 0& 0\\
				122&  0 & 0 & 0 & 1& 1\\
				212&  1 & 1 & 0 & 0& 0\\
				221&  0 & 0 & 1 & 0& 0\\
				222&  0 & 0 & 0 & 1& 1}.
			\]
			Note that $(121)(121)$ entry of $A_3$ is 0 since $121$ and $121$ do not progressively overlap, and others are 0 for the same reason. Figures~\ref{fig:1} and~\ref{fig:2} show the graphs $G_A^{(1)}=G_A,$ and $G_A^{(3)}$ respectively. In these figures an edge from vertex $X$ to vertex $Y$ is labeled by the word $X*Y$.
			
			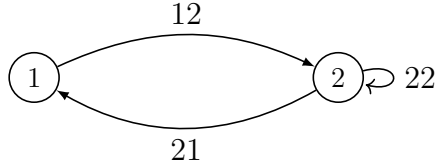
\begin{figure}[h]
				\centering
				$\displaystyle
				\begin {tikzpicture}[-latex ,auto ,node distance =3cm and 4cm ,on grid , semithick , state/.style ={draw, circle}] 
				\node[state,scale=0.9] (A) {$1$};
				\node[state,scale=0.9] (B) [right =of A] {$2$};
				\path (A) edge [bend left =25] node[above] {$12$} (B);
				\path (B) edge [bend right = -30] node[below] {$21$}(A);
				\path (B) edge [loop right] node[right] {$22$} (B);
			\end{tikzpicture}
			$
			\caption{The graph $G_A$ corresponding to the shift $\Sigma_A$}
			\label{fig:1}
		\end{figure}
		
		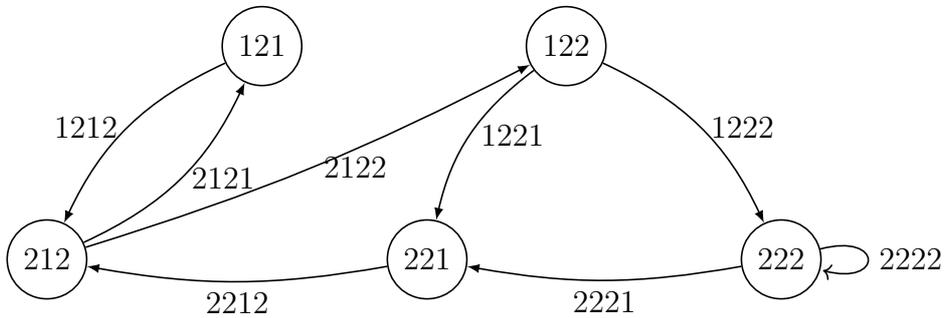
\begin{figure}[h]
			\centering
			$\displaystyle
			\begin {tikzpicture}[-latex, auto ,node distance =4cm, ,on grid, semithick, state/.style ={draw, circle}]
			\node[state] (A) {$121$};
			\node[state] (B) [right =of A] {$122$};
			\node[state] (C) [below left of = A] {$212$};
			\node[state] (D) [right = 5of C] {$221$};
			\node[state] (E) [below right of = B] {$222$};
			\path (A) edge [bend right =20] node[left] {$1212$} (C);
			\path (C) edge [bend right =20] node[right] {$2121$} (A);
			\path (D) edge [bend left =10] node[below] {$2212$} (C);
			\path (E) edge [bend left =10] node[below] {$2221$} (D);
			\path (B) edge [bend right =20] node[right] {$1221$} (D);
			\path (B) edge [bend left =20] node[right] {$1222$} (E);
			\path (E) edge [loop right] node[right] {$2222$} (E);
			\path (C) edge [bend right= 5] node[right]{$2122$} (B);
		\end{tikzpicture} $
		\caption{The graph $G_{A_3}$ corresponding to the higher block presentation of $\Sigma_A$ of order 3}
		\label{fig:2}
	\end{figure}
\end{exam}

%The higher block representation $\Sigma^{(2)}_A=\Sigma_{A_2}$ of $\Sigma_A$ of order 2 is given as follows. The set of words of length 2 that appear in $\Sigma_A$ is $\mathcal{L}_2=\{12,21,22\}$. Hence the matrix $A_2$ with its rows and columns indexed by elements of $\mathcal{L}_2$ is given by 
%\renewcommand{\kbldelim}{(}% Left delimiter
%\renewcommand{\kbrdelim}{)}% Right delimiter
%\[ 
%A_2 =  \kbordermatrix{
%	&  12 & 21 & 22\\
%	12 & 0 & 1 & 1 \\
%	21 & 1 & 0 & 0\\
%	22 & 0 & 1 & 1 
%}.
%\]
%Note that $(12)(12)$ entry of $A_2$ is 0 since $12$ and $12$ do not progressively overlap, and others are 0 for the same reason.

%\begin{figure}[h]
%	\centering
%	$\displaystyle\begin {tikzpicture}[-latex, auto ,node distance =3cm and 4cm, ,on grid, semithick, state/.style ={draw, circle}]
%	\node[state,scale=0.9] (A) {$12$};
%\node[state,scale=0.9] (B) [right =of A] {$21$};
%	\node[state,scale=0.9] (D) [below left of = B] {$22$};
%	\path (A) edge [bend left =20] node[above] {$121$} (B);
%	\path (B) edge [bend right = -20] node[below] {$212$}(A);
%	\path (D) edge [loop below] node[below] {$222$} (D);
%	\path (A) edge [bend right =25] node[below] {$122$} (D);
%	\path (D) edge [bend right =25] node[right] {$221$} (B);
%\end{tikzpicture}
%$
%\caption{The graph $G_{A_2}$ corresponding to the higher block presentation of $\Sigma_A$ of order 2}
%\label{fig:2}
%\end{figure}

\subsection{Markov measure on a SFT}
Let $\Sigma=\{1,2,\dots,N\}$ and let $A$ be an irreducible matrix with entries 0 or 1. Let $P=(P_{ij})$ be a row stochastic matrix of size $N$ which is \textit{compatible with $A$}. That is, $P_{ij}> 0$ if and only if $A_{ij}=1$, for each $i,j\in \Sigma$. A row stochastic matrix is a non-negative matrix having every row sum to be $1$. We now define a measure on $\Sigma_A$ with respect to $P$, denoted by $\mu_P$, known as a \textit{Markov measure}. We know that $1$ is the Perron root of $P$ (which is the largest positive eigenvalue of $A$), and the column vector of size $N$ denoted by $\mathbf{1}$ having all entries $1$, is the corresponding right Perron eigenvector of $P$. Let $\textbf{p}=(p_1,\dots,p_N)^T$ be a normalized left Perron column eigenvector of $P$ corresponding to Perron root 1, that is, $\textbf{p}^TP=\textbf{p}^T$ and $p_1+\dots+p_N=1$. The vector $\textbf{p}$ is known as the \textit{stationary vector}. Since $A$ is irreducible, the matrix $P$ is also irreducible. Hence, the stationary vector $\textbf{p}$ exists and is unique by the Perron-Frobenius Theorem.

Let $C_w$ be the cylinder based at an allowed word $w=w_1\dots w_n$ in $\Sigma_A$. Then $\mu_P$ is defined on cylinders as $\mu_P(C_w) := p_{w_1}P_{w_1w_2}\dots P_{w_{n-1}w_n}$. If $w$ is just a symbol, that is $w=w_1$, then $\mu(C_w) := p_{w_1}$. By the Kolmogorov Extension Theorem, this uniquely
defines a measure $\mu_P$ on the Borel sigma-algebra generated by cylinders. When the context is clear, we use the notation $\mu$ instead of $\mu_P$. The measure $\mu_P$ is a probability measure on $\Sigma_A$ that is invariant under $\sigma$. We use shorthand $\mu(w)$ to denote the measure $\mu_P(C_w)$ of $C_w$. Due to the irreducibility of $A$, the SFT $\Sigma_A$ is ergodic with respect to $\mu_P$. 

The Markov measure, which is the unique measure of maximal entropy on $\Sigma_A$, is known as the \textit{Parry measure}. It is defined as follows. Let $\textbf{u}=(u_1,\dots,u_N)^T$, $\textbf{v}=(v_1,\dots,v_N)^T$ be the positive left and right Perron eigenvectors of the matrix $A$ corresponding to the Perron root $\lambda$ (which is the largest positive eigenvalue of $A$ that exists by the Perron-Frobenius theorem). Define a stochastic matrix $P$ as follows. For each $i,j\in \Sigma$, let $P_{ij}=\dfrac{A_{ij}v_j}{\lambda v_i}$, $p_i=\dfrac{u_iv_i}{\textbf{u}^T\textbf{v}}$. The Parry measure is $\mu=\mu_P$ with $P$ just defined.

\subsection{Higher block representation of the Markov measure $\mu_P$}
We aim to describe the measure on $\Sigma_A^{(n)}$ induced by the Markov measure $\mu_P$ on $\Sigma_A$. The induced measure is also a Markov measure. For each $n\ge 1$, define a matrix $P_n$ indexed by $\mathcal{L}_n$. For each $X=x_1\dots x_n$ and $Y=y_1\dots y_n$ in $\mathcal{L}_n$, define $(P_n)_{XY}=P_{x_ny_n}$, if $X*Y\in \mathcal{L}_{n+1}$, and is 0, otherwise. 

Note that $P_1=P$. It is easy to check that $P_n$ is a stochastic matrix compatible with $A_n$, and its stationary vector is given by $\mathbf{\Lambda}_n=(\mu(X))_{X\in \mathcal{L}_n}$. That is, $\mathbf{\Lambda}_n^T P_n=\mathbf{\Lambda}_n^T$. The measure induced by the Markov measure $\mu_P$ on the higher block presentation $\Sigma_{A}^{(n)}$ of $\Sigma_A$ is nothing but the Markov measure $\mu_{P_n}$. 

%Let us see how. Let $X_1=x_1\dots x_n, X_2=x_2\dots x_{n+1}, \dots, X_k=x_k\dots x_{n+k-1}$, be words from $\mathcal{L}_n$. Observe that
%\begin{eqnarray*}
%\mu_{P_n}(C_{X_1X_2\dots X_k}) &=& \Lambda_n(X_1)(P_n)_{X_1X_2}\dots (P_n)_{X_{k-1}X_k}\\
%&=& \left(p_{x_1}P_{x_1x_2}\dots P_{x_{n-1}x_n}\right)P_{x_nx_{n+1}}\dots P_{x_{n+k-2}x_{n+k-1}}\\
%&=& \mu_P(C_{x_1\dots x_{n+k-1}}).
%\end{eqnarray*}

\begin{exam}
Consider the SFT $\Sigma_A$ with $\Sigma=\{1,2\}$ and $A=\begin{pmatrix}
	0 & 1\\ 1 & 1
\end{pmatrix}$. Any stochastic matrix $P=(P_{ij})_{1\le i,j\le 2}$ compatible with $A$ satisfies $P_{11}=0$, hence $P_{12}=1$ since $P$ is row stochastic. For $n=2$, the matrix $P_2$ has size 3 indexed by words from $\mathcal{L}_2=\{12,21,22\}$. We have 
\renewcommand{\kbldelim}{(}% Left delimiter
\renewcommand{\kbrdelim}{)}% Right delimiter
\[ 
A_2 =  \kbordermatrix{
	&  12 & 21 & 22\\
	12 & 0 & 1 & 1 \\
	21 & 1 & 0 & 0\\
	22 & 0 & 1 & 1 
}, \  
P_2 =  \kbordermatrix{
	&  12 & 21 & 22\\
	12 & 0 & P_{21} & P_{22} \\
	21 & 1 & 0 & 0\\
	22 & 0 & P_{21} & P_{22}
}.
\]
\end{exam}

\section{Escape rate formula using higher block representation of the Markov measure}\label{sec:esc_rate_form}
In this section, we consider an SFT $\Sigma_A$ with a Markov measure $\mu_P$. We will first recall the definition of escape rate into a hole in $\Sigma_A$. We then state and prove one of the main results of our paper, which establishes a direct formula for the escape rate, see Theorem~\ref{thm:erformula}. Further, Theorem~\ref{thm:location} gives bounds on the value of the escape rate into a hole, which is a single cylinder. %Some intermediate results on interlacing of eigenvalues of matrices and some properties of stochastic matrices, that are needed to prove Theorem~\ref{thm:location}, have been discussed in Subsection~\ref{subsec:intermediate_results}.

\begin{definition}[Markov Hole]
Let $\G$ be a finite collection of allowed words in an SFT $\Sigma_A$. A \textit{Markov hole given by the collection $\G$}, denoted by $H_\G$, is defined as the union of cylinders based at words from $\G$. That is, $H_\G = \bigcup_{w\in\G}C_w$.
\end{definition}

In the rest of the paper, $\G$ denotes a finite collection of allowed words in $\Sigma_A$ and will define the Markov hole.

\begin{definition}[Escape rate into a Markov hole]
Let $\Sigma_A$ be an SFT with a Markov measure $\mu=\mu_P$. The escape rate into the hole $H_\G$ is defined as 
\begin{equation}\label{eq:escrate_defn}
	\rho(H_\G) :=-\lim_{m\to\infty}\frac{1}{m}\ln\mu(\mathcal{W}_m),
\end{equation}
provided the limit exists, where $\mathcal{W}_{m}=\{x\in\Sigma_A\ : \ \sigma^ix\notin H_\G, \text{ for } 0\le i\le m\}$.
\end{definition}

\noindent In due course, we will prove that the above limit exists and give two approaches to compute the escape rate.

\subsection{Adjacency matrix of the open system}%\label{subsec:adj_matrix_B_G}
Let $\Sigma_A$ be an SFT with a Markov hole $H_\G$. Let $r$ be the length of the longest word in $\G$. We now define a matrix $B = B_\G$ with each of its entries either 0 or 1, as follows. For $r \ge 2$, the matrix $B$ is indexed by $\mathcal{L}_{r-1}$, and for each $X,Y\in\mathcal{L}_{r-1}$, $B_{XY} = 1$ if and only if $X*Y$ belongs to $\mathcal{L}_r$ and does not contain a word from $\G$. For $r=1$, we fix the convention that, $B$ is a matrix indexed by $\Sigma$, defined in the same fashion above.

The matrix $B_\G$ will be called the \textit{adjacency matrix of the open system} throughout the rest of the paper. The SFT $\Sigma_{B_\G}$ is the collection of all those sequences in $\Sigma_A$ which do not contain words from $\G$. The matrix $B_\G$ is the adjacency matrix of the graph obtained from $G_{A_{r-1}}$ by removing all those edges whose labels contain a word from $\G$.

\begin{exam}
	Let $\Sigma=\{1,2\}$, $A=\begin{pmatrix}
		0 & 1\\ 1 & 1
	\end{pmatrix}$, $\mathcal{G}=\{121,1221\}$. Since the longest word in $\mathcal{G}$ has length 4, the matrix $B_{\mathcal{G}}$ is indexed by words in $\mathcal{L}_3=\{121,122,212,221,222\}$. The adjacency matrix corresponding to the hole $H_\G$ is given by 
	\[
	B_{\mathcal{G}} = \renewcommand{\kbldelim}{(}
	\renewcommand{\kbrdelim}{)}
	\kbordermatrix{
		& 121 & 122 & 212 & 221 & 222\\
		121 & 0 & 0 & \textbf{0}  & 0 & 0\\
		122& 0 & 0 & 0 & \textbf{0} & 1\\
		212 & \textbf{0} & 1 & 0 & 0 & 0\\
		221 & 0 & 0 & 1 & 0 & 0\\
		222 & 0 & 0 & 0 & 1 & 1
	}.
	\]
	The $0$ entries in boldface correspond to $X=x_1x_2x_3, Y=y_1y_2y_3\in \mathcal{L}_3$ for which $x_2x_3=y_1y_2$ (progressive overlap), but $X*Y=x_1x_2x_3y_3$ contains a word from $\G$.
	%The $0$ entries marked in blue correspond to $X=x_1x_2x_3, Y=y_1y_2y_3\in \mathcal{L}_3$ for which $x_2x_3\ne y_1y_2$ (no progressive overlap).
\end{exam}

%\subsubsection*{Description of the matrix $B_\G$ as the adjacency matrix of certain graph}
%Let $r\ge 2$ be the length of the longest word in $\G$. Consider the directed graph associated with the matrix $B_\G$ as defined above. Consider the higher block presentation of $\Sigma_A$ of order $r-1$. Then its adjacency matrix is given by $A_{r-1}$ indexed by words from $\mathcal{L}_{r-1}$ (allowed words of length $r-1$ in $\Sigma_A$) given as follows: for $X=x_1\dots x_{r-1}, Y=y_1\dots y_{r-1}\in \mathcal{L}_{r-1}$, $(A_{r-1})_{XY}=1$ if and only if $x_2\dots x_{r-1}=y_1\dots y_{r-2}$ (progressive overlapping), and $x_1\dots x_{r-1}y_{r-1}\in \mathcal{L}_{r}$. Hence, $B_\G$ and $A_{r-1}$ differ in only the entries where 
%$x_2\dots x_{r-1}=y_1\dots y_{r-2}$ (progressive overlapping) but $x_1\dots x_{r-1}y_{r-1}$ begins with a word from $\G$. Thus, $B_{\G}$ is the adjacency matrix of the graph obtained from $G_{A_{r-1}}$ by removing all those edges whose labels begin with a word from $\G$.

\subsection*{Formula for the escape rate}

The following result gives a formula to compute the escape rate into a Markov hole. For any two matrices $A$ and $B$ of the same size, we denote the {\it Hadamard product}, or entrywise product of $A$ and $B$ by $A \circ B$. 
%During the course of the proof we also show that the limit in \eqref{eq:escrate_defn} exists.

\begin{theorem}\label{thm:erformula} (With the notations as above) Let $\Sigma_A$ be an SFT with a Markov measure $\mu_P$. Consider the Markov hole $H_\G$ given by a collection $\G$. Let $r$ denote the length of the longest word in $\G$. The escape rate into the hole $H_\G$ is given by,
	\[
	\rho(H_\G) = 
	-\ln \lambda(B_\G\circ P_{r-1}),
	\]
	where $\lambda(B_\G\circ P_{r-1})$ denotes the spectral radius of the Hadamard product of $B_\G$ and $P_{r-1}$. Moreover, $\rho(H_\G)>0$.
\end{theorem}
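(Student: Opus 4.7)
My plan is to pass to the higher block presentation of order $r-1$, in which the underlying chain has transition matrix $P_{r-1}$ on vertex set $\mathcal{L}_{r-1}$ and the effect of the hole becomes clean: an edge $X\to Y$ is forbidden precisely when $X*Y$ contains a word from $\G$, which is exactly the edge-removal encoded by $B_\G$. Writing $M := B_\G\circ P_{r-1}$, I would express the survival measures $\mu(\mathcal{W}_m)$ as a quadratic form in $M^k$ and then extract the exponential rate from the spectral radius $\lambda(M)$.

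More concretely, for $n\ge r$ set $\mathcal{V}_n := \{x\in\Sigma_A : x_1\cdots x_n \text{ contains no word from } \G\}$. Any $\G$-word of length $\ell\le r$ occurring in $x_1\cdots x_n$ fits inside some $r$-window $X_j*X_{j+1}=x_j\cdots x_{j+r-1}$, so ``$x_1\cdots x_n$ is $\G$-free'' is equivalent to every transition $X_j\to X_{j+1}$, for $j=1,\dots,n-r+1$, being a $B_\G$-edge. Multiplying out the Markov weights in the higher block presentation (using $\mu_P = \mu_{P_{r-1}}$) then yields
\[
\mu(\mathcal{V}_n) \;=\; \mathbf{\Lambda}_{r-1}^T\, M^{n-r+1}\, \mathbf{1}.
\]
Next I would sandwich $\mathcal{W}_m$ between two such sets. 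From the length bound $\ell\le r$, the absence of $\G$-words in $x_1\cdots x_{m+r}$ rules out any $\G$-word starting in $\{1,\dots,m+1\}$, giving $\mathcal{V}_{m+r}\subseteq\mathcal{W}_m$; conversely, if no $\G$-word starts in $\{1,\dots,m+1\}$ then in particular no $\G$-word sits inside $x_1\cdots x_{m+1}$, giving $\mathcal{W}_m\subseteq\mathcal{V}_{m+1}$. Combining,
\[
\mu(\mathcal{V}_{m+r}) \;\le\; \mu(\mathcal{W}_m) \;\le\; \mu(\mathcal{V}_{m+1}).
\]

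To extract the rate, I would use Gelfand's formula $\|M^k\|^{1/k}\to\lambda(M)$ together with the fact that both $\mathbf{\Lambda}_{r-1}$ and $\mathbf{1}$ have strictly positive entries, so $\mathbf{\Lambda}_{r-1}^T M^k \mathbf{1}$ is pinched between constant multiples of $\|M^k\|_{\max}$. Taking $k$-th roots and using equivalence of matrix norms gives $(\mu(\mathcal{V}_n))^{1/n}\to\lambda(M)$, so the sandwich forces the defining limit of $\rho(H_\G)$ to exist and equal $-\ln\lambda(M)$.

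For the strict positivity $\rho(H_\G)>0$, equivalently $\lambda(M)<1$, I would use that $M$ is sub-stochastic (row sums $\le 1$, with strict inequality in at least one row since $\G\ne\emptyset$). Because $A$ is irreducible so is $A_{r-1}$, and from any $X\in\mathcal{L}_{r-1}$ there exists an $A_{r-1}$-directed path of length at most $|\mathcal{L}_{r-1}|^2$ ending at a vertex with a removed outgoing $B_\G$-edge, followed by traversing that removed edge. All transitions on this path carry positive $P_{r-1}$-weight, so there is a uniform $\delta>0$ and $T\in\mathbb{N}$ with $(M^T\mathbf{1})_X\le 1-\delta$ for every $X$. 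Hence $\|M^T\|_\infty<1$ and $\lambda(M)\le\|M^T\|_\infty^{1/T}<1$. The main subtlety throughout is that $M$ need not be irreducible even though $P_{r-1}$ is, so one cannot invoke Perron--Frobenius on $M$ directly; I circumvent this by comparing $\mathbf{\Lambda}_{r-1}^T M^k\mathbf{1}$ with $\|M^k\|_{\max}$, which is legitimate precisely because $\mathbf{\Lambda}_{r-1}$ and $\mathbf{1}$ are strictly positive.
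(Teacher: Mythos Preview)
Your argument is correct and follows essentially the same route as the paper: pass to the higher block presentation, sandwich $\mu(\mathcal{W}_m)$ between two quantities of the form $\mathbf{\Lambda}_{r-1}^{T}M^{k}\mathbf{1}$ with $M=B_\G\circ P_{r-1}$, and extract the spectral radius via the standard asymptotic $\bigl(\mathbf{\Lambda}_{r-1}^{T}M^{k}\mathbf{1}\bigr)^{1/k}\to\lambda(M)$. The only notable difference is in the positivity step: the paper dispatches $\lambda(M)<1$ in one line by invoking the Perron--Frobenius fact that an irreducible stochastic matrix strictly dominates any proper nonnegative sub-matrix in spectral radius, whereas you give the more explicit sub-stochastic argument via $\|M^{T}\|_{\infty}<1$; both are valid, and your remark that $M$ itself need not be irreducible is a useful clarification.
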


\begin{proof} %\textcolor{red}{NEW PROOF}\\
	For simplicity of notation, we will use shorthand $B$ for $B_\G$. Consider $\mathcal{W}_m$, the collection of all sequences in $\Sigma_A$ for which $\sigma^ix\notin H_\G$, for all $0\le i\le m$. Let $x=(x_n)_{n\ge 1}\in \mathcal{W}_m$. For each $i\ge 1$, $X_i=x_i\dots x_{i+r-2}\in \mathcal{L}_{r-1}$, and none of the words $X_1,\dots,X_{m+1}$ begin with a word from $\G$. Hence 
	\[
	\mu(\mathcal{W}_m)  =  \sum_{\substack{X_1, \dots, X_{m+1} \in \mathcal{L}_{r-1}, \\ \text{none beginning with a word from }\G}} \mu_{P_{r-1}}(C_{X_1\dots X_{m+1}}).
	\]
	Moreover, none of the words $X_1,\dots,X_{m-r+2}$ contain a word from $\G$. Hence 
	\begin{eqnarray}\label{eq:S1S2}
		S_1 \le \mu(\mathcal{W}_m)\le S_2,
	\end{eqnarray}
	where 
	\begin{eqnarray*}
		S_1&=&\sum_{\substack{X_1, \dots, X_{m+1} \in \mathcal{L}_{r-1}, \\ \text{none containing a word from }\G}} \mu_{P_{r-1}}(C_{X_1\dots X_{m+1}}),\\
		S_2&=&\sum_{\substack{X_1, \dots, X_{m-r+2} \in \mathcal{L}_{r-1}, \\ \text{none containing a word from }\G}} \mu_{P_{r-1}}(C_{X_1\dots X_{m-r+2}}).
	\end{eqnarray*}
	Now observe that 
	\begin{eqnarray*}
		S_1&=&\sum_{X_1, \dots, X_{m+1} \in \mathcal{L}_{r-1}} \mathbf{\Lambda}_{X_1} (B\circ P_{r-1})_{X_1X_2}\dots (B\circ P_{r-1})_{X_mX_{m+1}}\\
		&=&\sum_{X_1\in \mathcal{L}_{r-1}}\mathbf{\Lambda}_{X_1}R_{X_1}^{m},
	\end{eqnarray*}
	where $\mathbf{\Lambda}$ is the left Perron eigenvector of $P_{r-1}$ and $R^{m}_{X_1}$ denotes the $X_1^{th}$ row sum of $(B \circ P_{r-1})^{m}$. Similarly,
	\[
	S_2=\sum_{X_1\in \mathcal{L}_{r-1}}\mathbf{\Lambda}_{X_1}R_{X_1}^{m-r+1}.
	\]
	Let $\mathbf{\Lambda}_{min}$ and $\mathbf{\Lambda}_{max}$ denote the smallest and the largest entry of $\mathbf{\Lambda}$. Both are positive since $P$, hence $P_{r-1}$, is irreducible. Thus
	\[
	\mathbf{\Lambda}_{min} \sum (B\circ P_{r-1})^{m} \le S_1 \le \mathbf{\Lambda}_{max} \sum (B\circ P_{r-1})^{m},
	\]
	and 
	\[
	\mathbf{\Lambda}_{min} \sum (B\circ P_{r-1})^{m-r+1} \le S_2 \le \mathbf{\Lambda}_{max} \sum (B\circ P_{r-1})^{m-r+1},
	\]
	where $\sum (B\circ P_{r-1})^{n}$ denotes the sum of all the entries of the matrix $(B\circ P_{r-1})^{n}$. By~\eqref{eq:S1S2},
	\begin{eqnarray}\label{eq:erate-BoP}
		\mathbf{\Lambda}_{min} \sum (B\circ P_{r-1})^{m} \le \mu(\mathcal{W}_m)\le \mathbf{\Lambda}_{max} \sum (B\circ P_{r-1})^{m-r+1}.
	\end{eqnarray}
	Taking logarithm and dividing by $m$ on all terms and then taking the limit as $m \to \infty$, we get that the limit $ \lim\limits_{m\rightarrow \infty} \frac{1}{m}  \ln \mu(\mathcal{W}_m) $ exists and is given by,
	\[
	\rho(H_{\G}) = - \lim_{m\rightarrow \infty} \frac{1}{m}  \ln \sum (B\circ P_{r-1})^m.
	\]
	Since $B \circ P_{r-1}$ is non-negative, this is further equal to $-\ln \lambda(B\circ P_{r-1})$.\\
	Finally, since $P_{r-1}$ is a stochastic matrix and $0<B\circ P_{r-1}<P_{r-1}$, we have $0\le \lambda(B\circ P_{r-1})<1$. Hence  $\rho(H_{\G})>0$. %The proof of the case when $r=1$ is similar.
\end{proof}

\begin{corollary}\label{cor:parry}
	Let $\mu = \mu_P$ be the Parry measure on $\Sigma_A$. The escape rate into the hole $H_\G$ is given by,
	\begin{equation*}
		%\label{eq:esc_rate_parry}
		\rho(H_\G)=-\ln\left(\frac{\lambda(B_\G)}{\lambda(A)}\right)=h_{\text{top}}\left(\Sigma_{A}\right)-h_{\text{top}}\left( \Sigma_{B_\G} \right),
	\end{equation*}
	where $h_{\text{top}}$ is the topological entropy of the SFT.
\end{corollary}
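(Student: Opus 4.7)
The plan is to specialize Theorem~\ref{thm:erformula} to the case when $P$ is the Parry stochastic matrix and then reduce the Hadamard product $B_\G\circ P_{r-1}$ to a diagonal similarity of $B_\G$, scaled by $\lambda(A)^{-1}$. Concretely, recall the Parry formula $P_{ij}=A_{ij}v_j/(\lambda v_i)$, where $\lambda=\lambda(A)$ and $v=(v_1,\dots,v_N)^T$ is the positive right Perron eigenvector of $A$. Lifting to the higher block level, for $X=x_1\cdots x_{r-1}$ and $Y=y_1\cdots y_{r-1}$ in $\mathcal{L}_{r-1}$ with $X*Y\in\mathcal{L}_r$, the stochastic matrix satisfies
\[
(P_{r-1})_{XY} \;=\; P_{x_{r-1}y_{r-1}} \;=\; \frac{v_{y_{r-1}}}{\lambda\,v_{x_{r-1}}},
\]
and it vanishes otherwise. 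Hadamard-multiplying by $B_\G$ simply restricts this support to those $X*Y\in\mathcal{L}_r$ that avoid every word of $\G$, keeping the same ratio $v_{y_{r-1}}/(\lambda v_{x_{r-1}})$.

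Next I would introduce the diagonal matrix $D$ indexed by $\mathcal{L}_{r-1}$ with $D_{XX}=v_{x_{r-1}}$, so that $(D^{-1}B_\G D)_{XY}=(B_\G)_{XY}\,v_{y_{r-1}}/v_{x_{r-1}}$. The computation above then yields the identity
\[
B_\G\circ P_{r-1} \;=\; \frac{1}{\lambda}\,D^{-1}B_\G D.
\]
Since similar matrices share the same spectrum and $D$ has strictly positive diagonal, we conclude $\lambda(B_\G\circ P_{r-1})=\lambda(B_\G)/\lambda(A)$. Plugging this into Theorem~\ref{thm:erformula} gives the first equality $\rho(H_\G)=-\ln(\lambda(B_\G)/\lambda(A))$.

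For the second equality, I would invoke the standard fact that for an SFT defined by a non-negative integer (here $0$--$1$) transition matrix $M$, the topological entropy equals $\ln\lambda(M)$, and that this value is invariant under higher block recoding. Applied to $A$ and to $B_\G$ (which is the higher-block adjacency matrix of the SFT of sequences in $\Sigma_A$ avoiding every word of $\G$), this gives $h_{\text{top}}(\Sigma_A)=\ln\lambda(A)$ and $h_{\text{top}}(\Sigma_{B_\G})=\ln\lambda(B_\G)$, which turns $-\ln(\lambda(B_\G)/\lambda(A))$ into $h_{\text{top}}(\Sigma_A)-h_{\text{top}}(\Sigma_{B_\G})$.

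The only delicate step is the diagonal-similarity reduction: one must verify that the weights $v_{x_{r-1}}$, which depend only on the last symbol of the word $X\in\mathcal{L}_{r-1}$, are precisely what is needed because the Parry transitions $P_{x_{r-1}y_{r-1}}$ themselves depend only on the overlap symbols. Once this is in place, the rest is bookkeeping: the zero pattern of $D^{-1}B_\G D$ coincides with that of $B_\G\circ P_{r-1}$, the common factor $1/\lambda$ pulls out cleanly, and the conclusion follows directly.
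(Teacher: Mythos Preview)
Your proof is correct and follows essentially the same approach as the paper: both establish that $B_\G\circ P_{r-1}$ is diagonally similar to $\lambda(A)^{-1}B_\G$ via the Perron weights $v_{x_{r-1}}$, the paper phrasing this as an eigenvector computation and you as an explicit conjugation by $D$. Your version is in fact slightly more complete, since the paper only writes out the case $r=2$ and defers the general $r$ to \cite{HA:19}, whereas your diagonal-similarity argument handles arbitrary $r$ directly.
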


The above corollary follows from~\cite[Theorem 3.1]{HA:19} directly. We give a direct proof when all the words in $\G$ are of length 2.

\begin{proof}
	Let $\theta$ be an eigenvalue of $B_\G$ with eigenvector $w=(w_1,\dots,w_N)$. Then 
	\[\sum_j(B_\G\circ P)_{ij}\frac{w_j}{v_j}=\sum_j(B_\G)_{ij}P_{ij}\frac{w_j}{v_j}=\sum_j(B_\G)_{ij}w_j\frac{A_{ij}}{\lambda(A)v_i}=\frac{\theta}{\lambda(A)}\frac{w_i}{v_i}.
	\]
	We have used a simple observation that $(B_\G)_{ij}A_{ij}=(B_\G)_{ij}$. Hence $\lambda(B_\G\circ P)=\frac{\lambda(B_\G)}{\lambda(A)}$. The result follows.
\end{proof}

\begin{exam}
	Consider the full shift on $\Sigma=\{1,2\}$. That is, $\Sigma=\Sigma_A$ where $A$ is a matrix of size 2 having all its entries 1. Consider the Markov measure on $\Sigma_A$ given by the stochastic matrix $P$. First we consider a single cylinder based at a word of length 2 as the hole. There are 4 such holes. Table~\ref{tab:length2} shows the spectral radius calculations.
	
	\begin{table}[h!]
		\centering
		\begin{tabular}{|c|c|c|}
			\hline
			$\G$ & $B_\G$ & $\lambda(B_\G\circ P)$  \\
			\hline \hline
			$\{11\}$ & $\begin{pmatrix}
				0 & 1\\ 1& 1
			\end{pmatrix}$ & $\dfrac{P_{22} + \sqrt{P_{22}^2+4P_{12}P_{21}}}{2}$\\ 
			\hline
			$\{22\}$ & $\begin{pmatrix}
				1 & 1\\ 1& 0
			\end{pmatrix}$ & $\dfrac{P_{11} + \sqrt{P_{11}^2+4P_{12}P_{21}}}{2}$\\
			\hline
			$\{12\}$ & $\begin{pmatrix}
				1 & 0\\ 1& 1
			\end{pmatrix}$ & $\max\{P_{11}, P_{22}\}$\\
			\hline 
			$\{21\}$ & $\begin{pmatrix}
				1 & 1\\ 0& 1
			\end{pmatrix}$ & $\max\{P_{11}, P_{22}\}$  \\
			\hline
		\end{tabular}
		\caption{Spectral radius for $B_\G\circ P$ for $\G$ consisting of a single word}
		\label{tab:length2}
	\end{table}
	
	Now consider the hole given by the collection $\G=\{11,122\}$. Since the length of the longest word of $\G$ is 3, we need to compute the following matrices of size $|\mathcal{L}_2|=4$: $B_\G$ and $P_2$, indexed by words $11,12,21,22$ of $\mathcal{L}_2$ (allowed words of length 2). Here 
	\renewcommand{\kbldelim}{(}% Left delimiter
	\renewcommand{\kbrdelim}{)}% Right delimiter  
	\[
	B_\G = \kbordermatrix{
		& 11 & 12 & 21 & 22\\
		11&\textbf{0} & \textbf{0} & 0 & 0\\
		12&0 & 0 & 1 & \textbf{0}\\
		21&\textbf{0} & 1 & 0 & 0\\
		22&0 & 0 & 1 & 1
	}, \ \ P_2 =   \kbordermatrix{
		& 11 & 12 & 21 & 22\\
		11&P_{11} & P_{12} & 0 & 0\\
		12&0 & 0 & P_{21} & P_{22} \\
		21&P_{11} & P_{12} & 0 & 0\\
		22&0 & 0 & P_{21} & P_{22} 
	}.
	\]
	Hence $\lambda(B_\G\circ P_2)=\max\left\lbrace P_{22},\sqrt{P_{12}P_{21}}\right\rbrace$.
\end{exam}

%The following section is devoted to these results.
%\subsection{Some interlacing results for matrices}\label{subsec:intermediate_results}

\subsection{An upper bound for the escape rate}%\label{subsec:location}
%In Theorem \ref{thm:erformula} we obtain a formula for escape rate into hole $H_{\G}$ in terms of the spectral radius of $B_{\G} \circ P_{r-1}$. 
Our next main result (Theorem~\ref{thm:location}) gives an upper bound for the escape rate, in the special case when $\G$ contains a single word. In other words, we look for a lower bound for $\lambda(B_{\G} \circ P_{r-1})$. We may think of the matrix $B_{\G} \circ P_{r-1}$ as a perturbation of $P_{r-1}$ by changing one of its positive entries to $0$. To prove this theorem, we recall an important result on the separation of eigenvalues of a non-negative matrix and its principal submatrices, due to Hall and Porsching.

Due to the Perron-Frobenius Theorem, we know that the largest positive eigenvalue of a non-negative matrix $M$ equals its spectral radius, denoted by $\lambda(M)$. Let $M(i)$ denote the principal submatrix of $M$ obtained by removing the $i^{th}$ row and the $i^{th}$ column of $M$.

\begin{theorem}{\cite{Hall}}
	\label{thm:interlacing_non-negative}
	Let $M$ be a non-negative matrix of size $m \ge 2$ having a real eigenvalue other than its spectral radius $\lambda(M)$. Let $\theta$ be a real eigenvalue of $M$ other than $\lambda(M)$. Then $\theta \le \lambda(M(i)) \le \lambda(M)$, for all $1 \le i \le m$. The second inequality is strict if $M$ is irreducible. 
\end{theorem}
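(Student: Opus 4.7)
The plan is to prove the two inequalities separately, handling the upper bound by monotonicity of the Perron root and the lower bound by a sign analysis of characteristic polynomials.

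For the upper bound $\lambda(M(i)) \le \lambda(M)$, I would embed $M(i)$ into an $m \times m$ matrix $\tilde M$ by restoring the deleted row and column as zeros. Then $\tilde M \le M$ entrywise, and $\tilde M$ has the same nonzero spectrum as $M(i)$, so $\lambda(\tilde M) = \lambda(M(i))$. Monotonicity of the Perron root under entrywise domination of non-negative matrices yields the bound. When $M$ is irreducible, the deleted row and column of $M$ cannot both be identically zero, so $\tilde M \ne M$, and the strict Perron--Frobenius monotonicity for irreducible matrices produces the strict inequality.

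For the lower bound $\theta \le \lambda(M(i))$, set $p(x) = \det(xI - M)$ and $q(x) = \det(xI - M(i))$, both monic. The cofactor identity
\[
\bigl[(xI - M)^{-1}\bigr]_{ii} \;=\; \frac{q(x)}{p(x)}
\]
holds for every $x$ outside the spectrum of $M$. For $x > \lambda(M)$ the Neumann series $(xI-M)^{-1} = \sum_{n\ge 0} M^n / x^{n+1}$ is entrywise non-negative, so $q(x)/p(x) \ge 0$; since $p(x) > 0$ on $(\lambda(M),\infty)$, this forces $q(x) \ge 0$ there. Because $q$ is monic of degree $m-1$ and all its (complex) roots lie in the closed disk of radius $\lambda(M(i))$ by Perron--Frobenius applied to $M(i)$, I can strengthen this to $q(x) > 0$ for every $x > \lambda(M(i))$.

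I would then argue by contradiction: suppose some real eigenvalue $\theta$ of $M$ satisfies $\theta \in (\lambda(M(i)), \lambda(M))$. Since $q$ is strictly positive on the whole interval $(\lambda(M(i)), \lambda(M))$, the function $R_{ii}(x) = q(x)/p(x)$ has a pole precisely at $\theta$, and its sign on subintervals cut out by real roots of $p$ is dictated entirely by $p$. A sign count, using $p(x) > 0$ for $x > \lambda(M)$ together with the fact that $p$ changes sign at $\lambda(M)$ (automatic in the irreducible case, since $\lambda(M)$ is then simple by Perron--Frobenius), shows that the resulting sign pattern of $R_{ii}$ is incompatible with the partial-fraction expansion of the resolvent $(xI-M)^{-1}$ controlled by the Perron spectral projection, giving the desired contradiction. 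The main obstacle is precisely this last step: in the reducible case $\lambda(M)$ need not be simple, so the naive sign-change argument must be replaced either by a more careful study of the monotonicity of principal minors of $(xI-M)^{-1}$ (the route taken by Hall and Porsching), or by a perturbation argument passing through $M + \varepsilon J$, with $J$ the all-ones matrix, and taking $\varepsilon \to 0^+$ to transfer the irreducible conclusion.
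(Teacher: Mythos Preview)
The paper does not prove this theorem at all: it is quoted verbatim from Hall and Porsching \cite{Hall} and used as a black box in the proof of Theorem~\ref{thm:location}. So there is no ``paper's own proof'' to compare against.

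As for the content of your sketch: the upper bound and the strictness under irreducibility are handled correctly by entrywise monotonicity of the Perron root. The lower bound, however, is not actually proved. You set up the cofactor identity $[(xI-M)^{-1}]_{ii}=q(x)/p(x)$ and the observation that $q>0$ on $(\lambda(M(i)),\infty)$, which is fine, but the decisive step---deriving a contradiction from the sign pattern of $R_{ii}$ near a hypothetical eigenvalue $\theta\in(\lambda(M(i)),\lambda(M))$---is only asserted, not carried out. You yourself flag that in the reducible case $\lambda(M)$ need not be simple, so the sign-change argument at $\lambda(M)$ collapses; you then list two possible repairs (Hall--Porsching's monotonicity argument, or a perturbation $M+\varepsilon J$) without executing either. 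Even in the irreducible case, the claimed ``incompatibility with the partial-fraction expansion of the resolvent controlled by the Perron spectral projection'' is vague: you would need to say exactly which sign or residue is violated and why. As written this is an outline with a named gap, not a proof.
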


We now proceed to the following result, which gives an upper bound for the escape rate into a hole, which is a single cylinder.

\begin{theorem}\label{thm:location}
	Let $\Sigma_A$ be an SFT with a Markov measure $\mu_P$. Suppose $P$ has a positive real eigenvalue other than 1, and let $\theta$ denote its second largest positive real eigenvalue. Then the escape rate $\rho(C_u)$ into the cylinder $C_u$ satisfies $0<\rho(C_u)\le -\ln\theta$. 
\end{theorem}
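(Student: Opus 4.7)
My plan is to combine the explicit formula from Theorem~\ref{thm:erformula}, namely $\rho(C_u) = -\ln \lambda(B_{\{u\}} \circ P_{r-1})$, with the Hall--Porsching interlacing result (Theorem~\ref{thm:interlacing_non-negative}) applied to the stochastic matrix $P_{r-1}$. The target is the lower bound $\lambda(B_{\{u\}} \circ P_{r-1}) \ge \theta$, which will immediately yield $\rho(C_u) \le -\ln\theta$. The positivity $\rho(C_u) > 0$ is already contained in Theorem~\ref{thm:erformula}.

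The crucial structural observation is that when $\G = \{u\}$ consists of a single word of length $r$, the matrices $M := B_{\{u\}} \circ P_{r-1}$ and $P_{r-1}$ differ in exactly one entry. Indeed, $(B_{\{u\}})_{X,Y} = 0$ while $(P_{r-1})_{X,Y} > 0$ only when $X*Y \in \mathcal{L}_r$ and $X*Y$ contains the word $u$; since $|X*Y| = r = |u|$, this forces $X*Y = u$, which uniquely determines $X^* = u_1 \cdots u_{r-1}$ and $Y^* = u_2 \cdots u_r$. Consequently, removing row and column $X^*$ from both matrices yields the same principal submatrix: $M(X^*) = P_{r-1}(X^*)$. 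Since $M$ is non-negative, the standard Perron--Frobenius fact that the spectral radius of a non-negative matrix dominates that of any principal submatrix gives
\[
\lambda(B_{\{u\}} \circ P_{r-1}) = \lambda(M) \ge \lambda(M(X^*)) = \lambda(P_{r-1}(X^*)).
\]

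The next step, which I expect to be the main substantive point, is to show that $\theta$ is actually an eigenvalue of $P_{r-1}$, so that Theorem~\ref{thm:interlacing_non-negative} applied to $P_{r-1}$ gives $\lambda(P_{r-1}(X^*)) \ge \theta$. To this end, if $v \in \mathbb{R}^\Sigma$ satisfies $Pv = \theta v$, I define $\tilde v \in \mathbb{R}^{\mathcal{L}_{r-1}}$ by $\tilde v_X := v_{x_{r-1}}$, i.e., only depending on the last symbol of $X$. For each $X \in \mathcal{L}_{r-1}$, the rows of $P_{r-1}$ at $X$ are supported on words $Y = x_2 \cdots x_{r-1} y_{r-1}$ with $A_{x_{r-1}, y_{r-1}} = 1$ and weight $P_{x_{r-1},y_{r-1}}$, so
\[
(P_{r-1}\tilde v)_X = \sum_{y_{r-1}} P_{x_{r-1},y_{r-1}} v_{y_{r-1}} = (Pv)_{x_{r-1}} = \theta v_{x_{r-1}} = \theta\, \tilde v_X.
\]
Irreducibility of $A$ ensures $\tilde v \ne 0$, so $\theta$ is an eigenvalue of $P_{r-1}$, necessarily distinct from $\lambda(P_{r-1}) = 1$.

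Putting the pieces together, Theorem~\ref{thm:interlacing_non-negative} yields $\theta \le \lambda(P_{r-1}(X^*))$, and chaining the inequalities gives $\lambda(B_{\{u\}} \circ P_{r-1}) \ge \theta$, hence $\rho(C_u) \le -\ln\theta$, as desired. The main obstacle is purely bookkeeping: verifying that the eigenvector lifting $v \mapsto \tilde v$ from $P$ to $P_{r-1}$ is well defined and nonzero in all cases (including the degenerate case where $u$ is a constant word $a^r$, for which $X^* = Y^*$ and the modified entry lies on the diagonal, but the same principal-submatrix identity $M(X^*) = P_{r-1}(X^*)$ persists).
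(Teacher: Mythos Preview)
Your argument is correct (with the trivial caveat that for $r=1$ one works with $P$ itself rather than $P_{r-1}$), and it uses the same interlacing input (Theorem~\ref{thm:interlacing_non-negative}) as the paper, but the reduction is organised differently. The paper first treats the case where $u$ is a single symbol $i$, noting that then $(B_\G\circ P)(i)=P(i)$ and applying interlacing directly to $P$; for a general word $u=u_1\cdots u_r$ it then invokes the monotonicity $\rho(C_u)\le \rho(C_{u_1})$, which follows from the inclusion $C_u\subseteq C_{u_1}$. By contrast, you stay at the level of the higher block matrix $P_{r-1}$, observe that $B_{\{u\}}\circ P_{r-1}$ and $P_{r-1}$ differ in a single entry so that their principal submatrices at $X^*=u_1\cdots u_{r-1}$ coincide, and then lift the eigenvalue $\theta$ from $P$ to $P_{r-1}$ via the eigenvector $\tilde v_X=v_{x_{r-1}}$ in order to apply interlacing to $P_{r-1}$. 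The paper's route is shorter and avoids the eigenvector lifting, exploiting a simple hole-inclusion monotonicity; your route is more intrinsic to the given word and works entirely within the $(r-1)$-block presentation, which has the mild advantage of making transparent exactly which principal submatrix of $P_{r-1}$ witnesses the bound.
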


\begin{proof}
	Let us first assume that the length of $u$ is 1 and $u=i$. Then $(B_\G\circ P)(i)=P(i)$. Hence by Theorem~\ref{thm:interlacing_non-negative},
	\[
	\theta\le \lambda(P(i))=\lambda((B_\G\circ P)(i))\le \lambda(B_\G\circ P)< 1.
	\]
	Now, consider the general case. Let $u=u_1\dots u_r$. Consider the subword $w=u_1$ of $u$. By the above arguments, $0<\rho(C_{w})\le -\ln\theta$. Further, since $C_u\subseteq C_{w}$, we have $\rho(C_u)\le \rho(C_{w})$, and the result follows.
	%
	%Let us first assume that the length of $u$ is 2 and $u=ij$. Then $(B_\G\circ P)(i)=P(i)$. Hence by Theorem~\ref{thm:interlacing_non-negative},
	%\[
	%\theta\le \lambda(P(i))=\lambda((B_\G\circ P)(i))\le \lambda(B_\G\circ P)< 1.
	%\]
	%
	%Now, consider the general case. Let $u=u_1\dots u_r$. Consider the subword $w=u_1u_2$ of $u$. By the above arguments, $0<\rho(C_{w})\le -\ln\theta$. Further, since $C_u\subseteq C_{w}$, we have $\rho(C_u)\le \rho(C_{w})$, and the result follows.
\end{proof}

\begin{corollary}
	Suppose $P$ has a positive real eigenvalue other than 1, let $\theta$ denote its second largest positive eigenvalue. Let all the words in $\G$ have at least one common symbol. Then $0<\rho(H_{\G})\le -\ln\theta$.    
\end{corollary}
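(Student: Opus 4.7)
The plan is to reduce the statement to the single-cylinder case already handled in Theorem~\ref{thm:location}. Let $i$ be a symbol common to every word of $\G$, and let $r$ be the length of the longest word in $\G$. I would exploit the fact that a sequence staying away from the symbol $i$ for long enough is automatically prevented from starting a $\G$-word, since every $\G$-word of length at most $r$ must contain $i$.

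More precisely, write $\mathcal{W}_n(H)$ for the survival set up to time $n$ with respect to a hole $H$. The key observation is the dynamical inclusion
\[
\mathcal{W}_{m+r-1}(C_i) \;\subseteq\; \mathcal{W}_{m}(H_\G).
\]
To see this, note that $x\in\mathcal{W}_{m+r-1}(C_i)$ precisely means $x_1,\dots,x_{m+r}\neq i$. For any $0\le j\le m$, the prefix of $\sigma^j x$ of length at most $r$ lies within the block $x_{j+1},\dots,x_{j+r}$ and therefore avoids $i$; since every word in $\G$ contains $i$, no such prefix can belong to $\G$, so $\sigma^j x \notin H_\G$.

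From the inclusion, $\mu(\mathcal{W}_{m+r-1}(C_i)) \le \mu(\mathcal{W}_m(H_\G))$. Taking logarithms, dividing by $m$, and letting $m\to\infty$ (the prefactor $(m+r-1)/m$ tends to $1$, and both limits exist by Theorem~\ref{thm:erformula}) yields the monotonicity
\[
\rho(H_\G) \;\le\; \rho(C_i).
\]
Since $C_i$ is a single cylinder based at the length-one word $i$, Theorem~\ref{thm:location} gives $\rho(C_i)\le -\ln\theta$. The strict positivity $\rho(H_\G)>0$ is immediate from Theorem~\ref{thm:erformula}, and combining the two bounds finishes the proof.

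The only subtlety I anticipate is the length bookkeeping in verifying the inclusion: one must check that the window $[j+1,j+r]$ stays inside $[1,m+r]$ for every $0\le j\le m$, so that the no-$i$ hypothesis really blocks every admissible starting position of a $\G$-word. Beyond that the argument is routine, since the hard work (the single-cylinder bound via the Hall--Porsching interlacing theorem) has already been done in Theorem~\ref{thm:location}.
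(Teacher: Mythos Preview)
Your proof is correct and follows essentially the same strategy as the paper: reduce to a simpler hole built around the common symbol $i$, compare survival sets to obtain $\rho(H_\G)\le\rho(\text{simpler hole})$, and then invoke the Hall--Porsching interlacing bound on $P(i)$ already established in Theorem~\ref{thm:location}. The only difference is cosmetic: the paper passes through an intermediate collection $\G'$ of length-$2$ subwords of the words in $\G$ each containing $i$ (so that $(B_{\G'}\circ P)(i)=P(i)$ and the interlacing argument can be rerun), whereas you go straight to the length-one cylinder $C_i$ and cite Theorem~\ref{thm:location} as a black box; your route is marginally cleaner and requires the harmless time offset $m\mapsto m+r-1$ in the survival-set inclusion, which washes out in the limit.
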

\begin{proof}
	Let $i$ be the common symbol in all the words in $\G$. If $u \in \G \cap \Sigma$, then clearly $u=i$ and set $u'=i$. For each $u\in \G \setminus \Sigma$, let $u'$ be a subword of $u$ of length 2 containing the common symbol $i$. Let $H_\G'=\bigcup_{u\in\G} C_{u'}$. Since $\mathcal{W}_m(H_\G')\subseteq \mathcal{W}_m(H_\G)$ for $m\ge 1$, we have $\rho(H_\G)\le \rho(H_\G')$. Now $(B_{H_\G'}\circ P)(i)=P(i)$. The result follows by similar arguments as in the proof of Theorem~\ref{thm:location}.
\end{proof}

It is obvious that the escape rate need not be bounded above in general. For instance, when $\G$ consists of all the cylinders of length 2, then $H_\G=\Sigma_A$, in which case, the escape rate is $\infty$. We also give an example when $\G$ has two words, and the escape rate is not bounded above by $-\ln \theta$.

\begin{exam}
	Let $\Sigma=\{1,2,3\}$ and \[P=\begin{pmatrix}
		1/5&2/5&2/5\\
		9/10&1/10&0\\
		1/10&1/10&4/5
	\end{pmatrix},\] and $\G=\{21,33\}$. Here, $\theta=\frac{1}{20} (1 + \sqrt{97})>\frac{1}{10} (1 + \sqrt{5})=\lambda(B_\G\circ P)$. Hence, $\rho(H_\G)>-\ln(\theta)$. 
\end{exam}

%\begin{remark}
%Intuitively, we may expect that lower values of $P_{ab}$ give rise to smaller escape rate, $\rho(C_{ab})$. %Theorem~\ref{thm:eratevsP} suggest similar observation though it has more conditions. 
%However, comparing just the entries of $P$ need not give us the expected relations. 
%Consider the following example:
%\[
%P=\begin{pmatrix}
	%1/2& 3/10& 1/5\\
	%2/5&1/2&1/10\\
	%2/5&3/10&3/10
	%\end{pmatrix}. 
	%\] Here $P_{13}<P_{33}$ but $\rho(C_{33})<\rho(C_{13})$. This hints us that the escape rate also depends on other factors as mentioned in~\cite{BY,HA:23,BCL}. In the following section, we obtain a different expression for the escape rate that describes some of these other factors. 
	%\end{remark}
	
	\begin{remark}\label{rem:length2}
		When $r=1$, $\G$ consists only of symbols, and $B_\G$ is the matrix indexed by $\Sigma$ such that $B_{ij} = 1$ if and only if $ij \in \mathcal{L}_2$ and, $i,j \notin \G$. Construct a new collection of words of length 2 given by $\G' = \{ w  \in \mathcal{L}_2 \ : \ w \text{ contains } i, \ \text{for some } i  \in  \G \}$.
		If $r \ge 2$, then construct $\G' = \{ w  \in \mathcal{L}_r \ : \ w \text{ contains } u, \ \text{for some } u  \in  \G \}$ and switch to higher block representation of $\Sigma_A$ of order $r-1$. Hence for all $r\ge 1$, by definition, $B_{G} = B_{\G'}$ and $\rho(H_{\G}) = \rho(H_{\G'})$. This implies, to study the escape rate, one can as well assume all words in $\G$ to be of length $2$. This fact is mainly useful for the qualitative study of escape rate as evident in the following section.
	\end{remark}

	\subsection{Escape rate and thermodynamic formalism}\label{subsec:er_and_pressure}
	Thermodynamic formalism is an essential tool in ergodic theory and specifically to study shift spaces. In this section, we obtain another expression for the formula obtained for the escape rate in Theorem~\ref{thm:erformula} in terms of pressure. Let us first quickly recall some key concepts from thermodynamic formalism. We refer the reader to~\cite{Parry_pollicott_ast,Parry_Tuncel} for a comprehensive introduction to the subject.
	
	Let $\Sigma_A$ be an SFT. Let $\mathcal{C}(\Sigma_A)$ denote the space of real-valued continuous functions on $\Sigma_A$, and $\mathcal{M}_{\sigma}(\Sigma_A)$ denote the space of all $\sigma$-invariant probability measures on $\Sigma_A$. For a measure $\nu \in \mathcal{M}_{\sigma}(\Sigma_A)$, let $h_{\nu}(\sigma)$ denote the measure theoretic entropy of $\sigma$ with respect to $\nu$.
	We now give two basic definitions.
	\begin{itemize}
		\item {\bf Pressure and equilibrium states:} For $\phi \in \mathcal{C}(\Sigma_A)$, the {\it pressure of $\phi$}, denoted as $\cP_A(\phi)$, is defined as
		\[
		\cP_A(\phi):= \sup\left\{h_{\nu}(\sigma) + \int_{\Sigma_A} \phi \, d\nu \ : \ \nu \in \mathcal{M}_{\sigma}(\Sigma_A) \right\}.
		\]
		Each measure for which the above supremum is attained is called an {\it equilibrium state for $\phi$}.
		\item {\bf Transfer operator:} For any $\phi \in \mathcal{C}(\Sigma_A)$, the transfer operator $L_{\phi} : \mathcal{C}(\Sigma_A) \rightarrow \mathcal{C}(\Sigma_A)$ is defined as
		\[
		L_{\phi} g (x) := \sum\limits_{y \in \sigma^{-1}(x)} e^{\phi(y)} g(y).
		\]
		If $\phi$ is a function with {\it summable variation}, that is, \\
		$\sum_{k \ge 1} \sup \left\{ |\phi(x) - \phi(y)|\ : \ x_i = y_i \ \text{for all } i=1,\dots,k \right\} < \infty$, then the {\it Ruelle Perron-Frobenius Theorem} for transfer operator states that there exists a maximal simple positive eigenvalue $\lambda$ for $L_{\phi}$ and it is given by $\lambda = e^{\cP(\phi)}$. In most literature, $\phi$ is called {\it potential}.
	\end{itemize}
	
	\noindent Let us now view the formula obtained in Theorem~\ref{thm:erformula} using this formalism. Let $\G$ be a finite collection of allowed words from $\Sigma_A$. By virtue of Remark~\eqref{rem:length2}, without loss of generality, we assume that all words in $\G$ are of equal length $2$. Let $H_{\G}$ be the Markov hole given by the collection $\G$ and $B_{\G}$ be the adjacency matrix of the open system as defined earlier. Observe that $\sigma: \Sigma_A \rightarrow \Sigma_A$ denotes the {\it closed system} and $\sigma|_{B_{\G}}: \Sigma_A\setminus H_\G \rightarrow \Sigma_A$ denotes the {\it open system}. Consider a Markov measure $\mu_P$ on $\Sigma_A$. We first look at the case when $\mu_P$ is the Parry measure.
	
	It is well-known that the Parry measure $\mu$ on $\Sigma_A$ is the unique equilibrium state for the function $\phi \equiv 0 \in \mathcal{C}(\Sigma_A)$. That is, it is the unique measure of maximal entropy satisfying $\cP_A(0) = h_{\mu}(\sigma)$, which equals the topological entropy of $\Sigma_A$, denoted as $h_{\text{top}}\left(\Sigma_{A}\right)$. Moreover, for $\phi \in \mathcal{C}(\Sigma_A),$ let $\cP_{B_{\G}}(\phi)$ denote the pressure of $\phi|_{\Sigma_{B_\G}} \in \mathcal{C}(\Sigma_{B_\G})$, the restriction of $\phi$ to $\Sigma_{B_{\G}}$.
	Hence, the escape rate into the hole $H_\G$ with respect to the Parry measure $\mu$ (given in Corollary~\ref{cor:parry}) can also be expressed as
	\begin{eqnarray}\label{eq:thermo-parry} 
		\rho(H_\G)&=& \cP_A(0) - \cP_{B_{\G}}(0).
	\end{eqnarray}
	
	Consider the general case of a Markov measure $\mu_P$ associated with a stochastic matrix $P$ on $\Sigma_A$. It is known that $\mu_P$ is the unique equilibrium state for the function $\phi=\phi_P \in \mathcal{C}(\Sigma_A)$ given by $\phi(x) = \ln P_{x_1 x_2}$, for $x=(x_n)_{n\ge 1}\in \Sigma_A$. Also, 
	%By $\phi \in \mathcal{C}(\Sigma_{B_{\G}})$  we mean the restriction of $\phi$ $\Sigma_{B_\G}$, also denoted by $\phi|_{\Sigma_{B_\G}} $. 
	\[
	\phi|_{\Sigma_{B_\G}} (x)  = \ln (B_{\G} \circ P)_{x_1 x_2}=\ln P_{x_1x_2}, \ \text{for}\  x=(x_n)_{n\ge 1}\in \Sigma_{B_\G},
	\]
	since $x_1x_2 \notin \G$. With these notations in place, we have the following result.
	\begin{theorem}\label{thm:erate_pressure}
		Consider the SFT $\Sigma_A$ with Markov measure $\mu_P$. Let $\phi_P$ denote a potential of summable variation for which $\mu_P$ is the equilibrium state. The escape rate into the hole $H_{\G}$ is given by 
		\[
		\rho(H_{\G}) = \cP_A(\phi_P) - \cP_{B_\G}(\phi_P).
		\]
	\end{theorem}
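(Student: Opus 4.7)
The plan is to reduce both pressure terms to the logarithm of an explicit spectral radius via the Ruelle-Perron-Frobenius theorem, and then invoke Theorem~\ref{thm:erformula}. By Remark~\ref{rem:length2}, I may assume without loss of generality that every word in $\G$ has length $2$, after passing (if necessary) to the higher block representation of order $r-1$. With this reduction, the relevant matrix is $B_\G \circ P$, and Theorem~\ref{thm:erformula} asserts $\rho(H_\G) = -\ln\lambda(B_\G\circ P)$, so it suffices to show
\[
\cP_A(\phi_P) - \cP_{B_\G}(\phi_P) = -\ln\lambda(B_\G\circ P).
\]

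First I would compute $\cP_A(\phi_P)$. Since $\mu_P$ is the equilibrium state for $\phi_P$, the variational principle gives $\cP_A(\phi_P) = h_{\mu_P}(\sigma) + \int \phi_P\,d\mu_P$. The standard formula for the Kolmogorov-Sinai entropy of a Markov measure together with $\phi_P(x) = \ln P_{x_1 x_2}$ and the stationarity of $\mu_P$ yields
\[
\cP_A(\phi_P) = -\sum_{i,j} p_i P_{ij} \ln P_{ij} + \sum_{i,j} p_i P_{ij} \ln P_{ij} = 0.
\]
Next I would compute $\cP_{B_\G}(\phi_P|_{\Sigma_{B_\G}})$ using the transfer operator. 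Because $\phi_P$ depends only on the first two coordinates, $L_{\phi_P|_{\Sigma_{B_\G}}}$ preserves the finite-dimensional subspace of continuous functions depending only on $x_1$; on $g = g(x_1)$ it acts by
\[
L_{\phi_P|_{\Sigma_{B_\G}}} g(x_1) = \sum_{i \in \Sigma} (B_\G \circ P)_{i x_1}\, g_i,
\]
so in coordinates it is multiplication by $(B_\G \circ P)^T$. The Ruelle-Perron-Frobenius theorem recalled in the excerpt identifies $e^{\cP_{B_\G}(\phi_P)}$ as the maximal positive eigenvalue of this operator, which by Perron-Frobenius theory equals $\lambda(B_\G \circ P)$, hence $\cP_{B_\G}(\phi_P) = \ln\lambda(B_\G\circ P)$.

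Subtracting the two expressions yields the desired identity. The delicate step is justifying that restricting the transfer operator to the subspace of one-coordinate functions captures its true spectral radius on all of $\mathcal{C}(\Sigma_{B_\G})$; this follows because $\phi_P$ has summable variation (in fact, it is locally constant on two-cylinders), so the Ruelle-Perron-Frobenius eigenfunction can be taken to depend only on $x_1$, and no portion of the peripheral spectrum is lost under the reduction. As a consistency check, the same argument applied to $\Sigma_A$ itself recovers $\cP_A(\phi_P) = \ln\lambda(P) = \ln 1 = 0$, in agreement with the variational computation above.
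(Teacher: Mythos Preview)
Your proposal is correct and follows essentially the same route as the paper: reduce to length-$2$ words via Remark~\ref{rem:length2}, show $\cP_A(\phi_P)=0$ from the variational characterisation, identify the transfer operator of $\phi_P|_{\Sigma_{B_\G}}$ with the matrix $B_\G\circ P$ (up to transpose) so that $\cP_{B_\G}(\phi_P)=\ln\lambda(B_\G\circ P)$, and conclude via Theorem~\ref{thm:erformula}. Your treatment is in fact slightly more explicit than the paper's, in that you spell out why the restriction to functions of the first coordinate captures the full spectral radius and include the consistency check $\cP_A(\phi_P)=\ln\lambda(P)=0$.
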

	
	\begin{remark}
		A similar result is obtained by Tanaka~\cite[Theorem 3.7]{Tanaka} when the symbol set is countable. Moreover, a formula for the escape rate into a hole with respect to a Gibbs measure in terms of the spectral radius of the associated transfer operator is obtained by Ferguson and Pollicott~\cite[Proposition 5.2]{Ferg_polli}. Both works follow the approach of singular
		perturbations of transfer operators associated with the subshifts and H\"older continuous potentials with summable variation. Here, we give a direct proof specifically for our setting by exploiting the exact form of the Markov measure $\mu_P$ and the associated function $\phi_P$.
	\end{remark}
	
	\begin{proof}
		For ease of notation, we denote $\phi_P$ as just $\phi$. Since the measure theoretic entropy $h_{\mu_P}(\sigma) = -\int \phi \, d\mu$, we have $\cP_A(\phi) = 0$. Observe that the function $\phi$ depends only on the first two co-ordinates of a sequence, hence by abuse of notation, we denote $\phi(x) = \phi(x_1, x_2)$, where $x = (x_n)_{n\ge 1} \in \Sigma_A$. 
		
		%Then the transfer operator $L_{\phi}$ can be identified with a matrix $T$ of order $N$ given by $T_{ij} = A_{ij} e^{\phi(i,j)}$. Since $\phi(x_1,x_2) = \ln P_{x_1 x_2}$, and $P_{ij} = 0$ iff $A_{ij} = 0$, we get $T=P$. The Ruelle Perron-Frobenius Theorem implies that the spectral radius of $L_{\phi}$ is $e^{P(\phi)} = 1$, which is same as the largest eigenvalue of the stochastic matrix $P$.
		
		\noindent We know that the transfer operator $L_{\phi|_{\Sigma_{B_\G}}}$ can be identified with a matrix $M$ indexed by elements of $\Sigma$, given by
		\[
		M_{x_1x_2} = (B_{\G})_{x_1x_2}e^{\phi|_{\Sigma_{B_\G}}(x_1,x_2)} = (B_{\G} \circ P)_{x_1x_2}.
		\]
		%$M_{ij} = (B_{\G})_{x_1 x_2}e^{\phi|_{\Sigma_{B_\G}}(i,j)} = (B_{\G} \circ P)_{ij}$, since $(B_{\G} \circ P)_{ij} = 0$ iff $(B_{\G})_{ij} = 0$. 
		Hence the spectral radius of $L_{\phi|_{\Sigma_{B_\G}}}$ equals the spectral radius of the matrix $B_{\G} \circ P$. Therefore $\cP_{\Sigma_{B_\G}}(\phi) = \ln \lambda(B_{\G} \circ P) $. The result follows by Theorem~\ref{thm:erformula}.
	\end{proof}
	
	\noindent In particular when $\mu_P$ is the Parry measure, the two functions $\phi \equiv 0$ and $\phi_P$ as defined above satisfy $\phi_P - \phi = h\circ \sigma - h + c$, for some function $h:\Sigma_A \rightarrow \mathbb{R}$ and a constant $c \in \mathbb{R}$. It is known that two functions with summable variation satisfying the above relation have same equilibrium measure. For ease of computation above, the potential $\phi = 0$ is preferred in the case of the Parry measure.
	%In such cases, the functions $\phi$ and $\phi_P$ are said to differ by the coboundary $h\circ \sigma - h$ plus a constant. It is well-known that two functions with summable variation have the same equilibrium measure if and only if they differ by a coboundary plus a constant. Hence, $\phi \equiv 0$ and $\phi = \phi_P$ have the same equilibrium measure, the Parry measure. For ease of computation above, the corresponding potential $\phi = 0$ is preferred in the case of the Parry measure.

	\subsection*{Connection of escape rate with perturbation of shifts}
	In~\cite{HA:23}, the authors derived that for SFT $\Sigma_A$ with Parry measure, the escape rate into the Markov hole $H_\G$ is given by the difference of the topological entropy of $\Sigma_A$, which is $\ln \lambda(A)$, and the topological entropy of the new SFT $\Sigma_{B}$ consisting of those sequences in $\Sigma_A$ which do not contain words from $\G$, which is $\ln \lambda(B)$ (see Corollary~\ref{cor:parry}). 
	In a paper by Lind~\cite{Lind_pert}, the perturbation of subshifts is studied. The notion of escape rate into holes is not discussed explicitly. However, the question of estimating the difference in topological entropies (equivalent to `drop in spectral radius') by forbidding a single word $u$ of length $r$ is considered. In the context of our work, this is precisely the escape rate into a single cylinder in the SFT when the underlying measure is the Parry measure. Lind proved that the difference between the topological entropies of $\Sigma_A$ and $\Sigma_B$ lies between positive (computable) constants times $\lambda(A)^{-r}$. The characteristic polynomial of $B$ involves the characteristic polynomial of $A$, a cofactor of the matrix $z\mathbf{I}-A$, and the correlation polynomial of the word $u$ (as in~\eqref{eq:polynomial}, see also Theorem~\ref{thm:relate}). Perturbations of subshifts are studied for one-dimensional subshifts when more than one word is forbidden by Ramsey in~\cite{Ramsey}. Moreover, this issue is addressed by Pavlov~\cite{pavlov} for multidimensional shifts of finite type. If we were to pose this issue in our setting of SFTs with a Markov measure, an appropriate question to ask would be (with notations as in the previous subsection) whether the ratio {\small $\dfrac{\cP_A(\phi_P) - \cP_{B_\G}(\phi_P)}{\mu(H_\G)}$} is bounded between two positive constants for all collections $\G$ consisting of $k$ words each of length $r$, for sufficiently large $r$. This question is addressed by Pollicott and Ferguson~\cite{Ferg_polli} in the context of local escape rate. 
	%Also, in light of~\eqref{eq:thermo-parry} and Corollary~\ref{cor:parry}, this question is addressed by Lind~\cite{Lind_pert} and Ramsey~\cite{Ramsey} when the underlying measure is the Parry measure.

	\section{Escape rate using recurrence relations}\label{sec:rec_relations}
	In this section, we derive certain recurrence relations that give us another method to compute the escape rate into a Markov hole $H_\G$. These relations will feature weighted correlation polynomial functions between the words in $\G$, which we define below. Let $\Sigma_A$ be an irreducible SFT with Markov measure $\mu=\mu_P$. Let $\mathbf{p}$ be the stationary vector of $P$.
	
	\begin{definition}[Weighted correlation polynomial] 
		Consider two words $u=u_1\dots u_\ell$ and $v=v_1\dots v_m$ with symbols from $\Sigma$ and $\ell,m\ge 2$.
		\begin{itemize}
			\item Fix $0\le s\le \ell-1$. We say that $s\in (u,v)$, if $u_{s+1}\dots u_\ell=v_1\dots v_{\ell-s}$.
			\item Set
			\[
			c_{s,u,v} := \begin{cases}
				1, \ \text{if } s\in (u,v),\\
				0, \ \text{otherwise}.
			\end{cases}
			\]
		\end{itemize} 
		The \emph{weighted correlation polynomial} of $u,v$ is defined as  
		\[
		\tau_{u,v}(z):=\begin{cases}
			\sum\limits_{s=0}^{\ell-1} c_{s,u,v}\delta_{s,u,v}z^{m+s-\ell}, & \text{if }\ell< m,\\
			\sum\limits_{s=\ell-m}^{\ell-1} c_{s,u,v}\delta_{s,u,v}z^{m+s-\ell}, & \text{if }\ell\ge m,
		\end{cases}
		\] where $\delta_{s,u,v}=P_{v_{\ell-s}v_{\ell-s+1}}\dots P_{v_{m-1}v_m}$, for $s>0$. By convention, when $\ell \ge m$, $\delta_{\ell-m,u,v}:=1$.
	\end{definition}
	
	\begin{definition}[Prime word]
		A word $u=u_1\dots u_\ell$ is known as a \textit{prime word} if, $s\in (u,u)$ if and only if $s=0$.
	\end{definition}
	
	\begin{remark}
		A few remarks about the weighted correlation polynomial.
		\begin{enumerate}
			\item If $u=v$, we use shorthand $\tau_u(z)$ for $\tau_{u,u}(z)$, and call it the \textit{weighted autocorrelation polynomial} of $u$. Observe that since $c_{0,u,u}=1$, the polynomial $\tau_u(z)$ has a constant term. 
			\item If $\ell-1\in (u,v)$, then $u_\ell=v_1$. We denote $\delta_v:=\delta_{\ell-1,u,v}=\mu(C_v)/p_{v_1}$.  
			\item For a prime word $u$, $\tau_u(z)=1$.
			%\item \textcolor{red}{This is only for reduced collections: The definition of $\delta_{s,u,v}$ imposes certain restrictions. If $m=1$, then $\tau_{u,v}(z)=0$.} Further, if $\ell>m\ge 2$, $\delta_{s,u,v}=0$, for $0<s<\ell-m+1$.
			%\item If $l<m$, then $c_{0,u,v}= 0$, since $u$ is not a subword of $v$.
			%\item If $l > m$, then, since $v$ is not a subword of $u$, we have $c_{s,u,v} =0$ for all $0 \le s \le l-m$.
		\end{enumerate}
	\end{remark}
	
	Let $H_{\G}$ be a Markov hole in $\Sigma_A$ given by a finite collection $\G$. In this section, we assume that $\G$ is {\it reduced}; that is, no word of $\G$ appears as a subword of any other word in $\G$. Note that $\G$ can contain words of different lengths, even words of length 1. 
	%Define $\G_1 := \G \cap \Sigma$ and $\G \setminus \G_1$ is the set of all words in $\G$ of length at least 2. Set $\Sigma\setminus \G := \Sigma \setminus \G_1$, which is the set of symbols which are not elements of $\G$.
	
	For $i\in\Sigma\setminus \G$, let $\mu(n,i)$ be the total measure of cylinders in $(\Sigma_A,\mu)$ based at words of length $n$ ending with $i$, not containing any word from $\G$ as a subword. Let $F_{i}(z)=\sum_{n\ge 1}\mu(n,i)z^n$ be the generating function of $\mu(n,i)$. Note that $\mu(1,i) = p_i$. %where $p_i$ is the $i^{th}$ entry in the stationary vector $p$ of $P$. 
	
	The following result gives another method to compute the escape rate into $H_\G$ in terms of these generating functions. We define $\F(z):= \sum_{i \in \Sigma\setminus \G} F_i(z)$, which is the generating function of $\sum_{i \in \Sigma\setminus \G} \mu(n,i)$.
	
	\begin{theorem}\label{thm:esc_rate_as_smallest_pole}
		%With notations as above, the escape rate into the Markov hole $H_\G$ is given by the logarithm of the radius of convergence $r_C$ of $\sum_{a\in\Sigma}F_a(z)$. Moreover $r_C$ is a pole of $\sum_{a\in\Sigma}F_a(z)$.\\
		With the notations as above, the radius of convergence of $\F(z)$ is given by $\text{exp}(\rho(H_\G))$. Moreover, $\text{exp}(\rho(H_\G))$ is a pole of $\F(z)$.
	\end{theorem}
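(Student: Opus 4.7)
My plan is to prove the theorem in two stages: first identify the radius of convergence by comparing the coefficients of $\F(z)$ with the survival probabilities already controlled in the proof of Theorem~\ref{thm:erformula}, and then exhibit $\F(z)$ as a rational function so that the pole claim follows from Pringsheim's theorem.

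Write $s_n = \sum_{i \in \Sigma \setminus \G} \mu(n,i)$ for the coefficients of $\F(z)$, let $r$ be the length of the longest word in $\G$, and set $M = B_\G \circ P_{r-1}$. A length-$n$ allowed word avoids every element of $\G$ as a subword precisely when each of its length-$r$ windows does, which (up to boundary effects of size $O(r)$) is the same combinatorial condition defining $\mathcal{W}_m$. Mimicking the sandwich estimate \eqref{eq:S1S2}--\eqref{eq:erate-BoP} from the proof of Theorem~\ref{thm:erformula}, conditioning on the final length-$(r-1)$ pattern $X \in \mathcal{L}_{r-1}$, I would establish constants $c_1, c_2 > 0$ and an integer $k \le r$ such that $c_1 s_n \le \mu(\mathcal{W}_{n-k}) \le c_2 s_n$ for all $n$ large enough. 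Combined with $\tfrac{1}{m} \ln \mu(\mathcal{W}_m) \to \ln \lambda(M)$ established in Theorem~\ref{thm:erformula}, this gives $\limsup_n s_n^{1/n} = \lambda(M)$, and Cauchy--Hadamard then identifies the radius of convergence of $\F(z)$ as $\lambda(M)^{-1} = \exp(\rho(H_\G))$.

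For the pole assertion, I would refine the count by introducing $s_n(X) :=$ measure of the set of length-$n$ allowed cylinders whose word avoids $\G$ and whose last $r-1$ symbols form $X$, for each $X \in \mathcal{L}_{r-1}$. A one-step extension argument yields the linear recursion $s_{n+1}(Y) = \sum_X M_{XY}\, s_n(X)$, so the generating functions $\F_X(z) = \sum_n s_n(X) z^n$ satisfy a system of the form $(I - zM^T)\mathbf{s}(z) = z^{r-1}\,\mathbf{s}_0$ for an explicit non-negative initial vector $\mathbf{s}_0$. Summing the $\F_X(z)$ over the $X$ that terminate in a symbol outside $\G$ recovers $\F(z)$, which is therefore a rational function whose denominator divides $\det(I - zM)$.

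Finally, the coefficients $s_n$ are non-negative, so Pringsheim's theorem forces the point $z_0 = \exp(\rho(H_\G))$ on the circle of convergence to be a genuine singularity of $\F$; rationality then upgrades this singularity to a pole. The main obstacle I foresee is ruling out cancellation of this pole by a zero of the numerator from the matrix inversion. This is handled by a Perron--Frobenius argument: $\lambda(M)$ is a non-negative eigenvalue of the non-negative matrix $M$ with non-negative left and right eigenvectors, and both the initial vector $\mathbf{s}_0$ (recording strictly positive cylinder measures) and the aggregation vector used to sum into $\F(z)$ have non-trivial pairing with that eigenspace. A minor further delicacy is that $M$ need not be irreducible even when $P$ is, since $B_\G$ can sever transitions in the higher block graph; this is resolved by restricting to the communicating class on which the Perron eigenvector is supported and which charges the survival process with positive mass, thereby ensuring $z_0$ appears as an actual pole of $\F(z)$.
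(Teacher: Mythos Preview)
Your radius-of-convergence argument is essentially the paper's: both sandwich $s_n$ between survival probabilities shifted by $O(r)$ and invoke the limit from Theorem~\ref{thm:erformula}. For the pole assertion the approaches diverge. The paper simply appeals to the estimate~\eqref{eq:erate-BoP} together with the Jordan canonical form of $M=B_\G\circ P_{r-1}$ to see directly that $\sum_n s_n z^n$ diverges at $z=\lambda(M)^{-1}$ (the dominant Jordan block contributes a term of order $n^{k-1}\lambda(M)^n$ to $s_n$, which one pairs with the positive lower bound). Your route---establish the matrix recursion $s_{n+1}(Y)=\sum_X M_{XY}s_n(X)$, conclude rationality of $\F$, and then invoke Pringsheim---is equally valid and arguably cleaner, since it avoids unpacking the Jordan structure. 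One remark: once you have non-negative coefficients, radius of convergence $\lambda(M)^{-1}$, and rationality, Pringsheim already forces $\lambda(M)^{-1}$ to be a singularity and hence a pole; the Perron--Frobenius discussion of eigenvector pairings and restriction to a communicating class is therefore unnecessary (any apparent numerator cancellation at $\lambda(M)^{-1}$ would contradict Pringsheim). The only genuine bookkeeping to watch is that the recursion for $s_n(X)$ kicks in only once $n\ge r-1$, so the ``initial vector'' is really the data $(s_{r-1}(X))_X$, but this does not affect rationality.
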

	
	\begin{proof}
		Observe that $\sum_{a \in \Sigma\setminus \G}\mu(n,a)$ is the total measure of cylinders in $(\Sigma_A,\mu)$ based at words of length $n$, not containing any word from $\G$ as a subword. Moreover, as argued in the proof of Theorem~\ref{thm:erformula}, $\sum_{a \in \Sigma\setminus \G}\mu(m+r,a)\le \mu(\mathcal{W}_{m})\le \sum_{a \in \Sigma\setminus \G}\mu(m+1,a)$, for all $m$, where $r$ is the length of the longest word in $\G$.  
		%\texthe longest word itcolor{ength of ren $\G$d}{(This is not true if $\G$ has words of different lengths even when we take $r$ to be the length of the longest word. For example, take $\G=\{1,22\}$, then $\sum_{a \in \Sigma\setminus \G}\mu(3,a)$ does not include the cylinder based at $001$ but $\mathcal{W}_{1}(H_\G)$ includes it as for $x\in C_{001}$, the points $x,\sigma x\notin H_\G$)}\textcolor{blue}{In this example $\ell=2$ and it will be same as $\mathcal{W}_{2}(H_\G)$}. 
		Hence the radius of convergence of $\F(z)$ is given by 
		\[
		\lim_{m\to\infty}\mu(\mathcal{W}_{m})^{-1/m} = \text{exp}(\rho(H_\G)),
		\]
		by the definition of $\rho(H_\G)$. Using~\eqref{eq:erate-BoP} and the Jordan canonical form of $B\circ P_{r-1}$, it can be proved that $\F(z)$ diverges at $z=\text{exp}(\rho(H_\G))$. 
	\end{proof}
	
	In what follows, we will obtain a system of linear equations to solve for $F_a(z)$, $a \in \Sigma\setminus \G$. Upon solving the system of linear equations, we will see that $\F(z)$ is a rational function, and hence by Theorem~\ref{thm:esc_rate_as_smallest_pole}, the escape rate into the Markov hole $H_\G$ is the logarithm of its smallest pole in modulus. We introduce some notations that will be used in the system of equations.
	
	For each $u\in\G$, let $\nu(n,u)$ be the total measure of cylinders $(\Sigma_A,\mu)$ based at words of length $n$ not containing any word from $\G$ as a subword except for a single appearance of the word $u$ at the end. Let $G_{u}(z)=\sum_{n\ge 1}\nu(n,u)z^n$ be the associated generating function. Note that $\nu(n,u) = 0$, for all $n < |u|$, where $|u|$ denotes the length of $u$, and $\nu(|u|,u) = \mu(C_u)$. In particular, if $u\in \G$ is the symbol $a$, say, then $\nu(1,u) = p_a$. 
	
	Below, $i(u)$ and $t(u)$ refer, respectively, to the first and last symbol of the word $u$. In general, $t_j(u)$ refers to the terminal subword of length $j$ of a word $u$. Also, $\chi_{a,b}=1$ if $a=b$, and $\chi_{a,b}=0$, otherwise. We have the following result. 
	
	\begin{theorem}\label{thm:rec_relations}
		(With the notations given above) For each $i \in \Sigma\setminus \G$ and $u\in \G \setminus \Sigma$, the generating functions $F_i$ and $G_{u}$, satisfy the following system of linear equations
		\begin{eqnarray}
			F_i(z)-p_i z+  \sum_{v \in \G\setminus \Sigma} \chi_{i,t(v)} \, G_{v}(z) &=& z\sum_{j \in \Sigma\setminus \G} F_j(z) P_{ji}, \label{eq:final0} \\
			%p_k z + G_k(z) &=& z\sum_{j \in \Sigma\setminus \G} F_j(z) P_{ji},  \nonumber \\
			z^{|u|-1}\delta_{u} F_{i(u)}(z) &=& \sum_{v\in\G\setminus \Sigma}  \tilde{\tau}_{v,u}(z) \,G_{v}(z), \label{eq:final}
		\end{eqnarray}
		where $\tilde{\tau}_{v,u}(z):= \tau_{v,u}(z) - z^{|u|-1}c_{|v|-1,v,u}\delta_{u}$.
	\end{theorem}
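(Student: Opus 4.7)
The plan is to establish both recurrences by a double-counting argument on words with prescribed bad-subword behaviour, in the style of Guibas and Odlyzko, and then pass to generating functions. The essential ingredients are the reducedness of $\G$, which prevents a bad subword from being nested in another word of $\G$ or lying hidden inside a segment of $u$, and the Markov structure, which lets us write the measure of a concatenation as the measure of its prefix times a product of transition probabilities.

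For equation~\eqref{eq:final0}, I would fix $n \ge 2$ and $i \in \Sigma \setminus \G$ and consider all extensions $wi$ where $w$ is a good word of length $n-1$ ending at some $j \in \Sigma \setminus \G$. Since $w$ has no bad subword and $i \notin \G$, the extended word $wi$ is either itself good or contains a single bad subword which must be a suffix $v \in \G$; reducedness forces this $v$ to be unique, with $|v| \ge 2$ and $t(v)=i$. Separating these two alternatives gives the coefficient identity
\[
\mu(n,i) \;+\; \sum_{v \in \G \setminus \Sigma,\; t(v)=i} \nu(n,v) \;=\; \sum_{j \in \Sigma \setminus \G} P_{ji}\, \mu(n-1,j).
\]
Multiplying by $z^n$, summing over $n \ge 2$, using $\mu(1,i)=p_i$ and $\nu(1,v)=0$ for $v \in \G \setminus \Sigma$, and rewriting the condition $t(v)=i$ via $\chi_{i,t(v)}$ produces~\eqref{eq:final0}.

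For~\eqref{eq:final}, I would run the dual count: start with a good word $w$ of length $n$ with $w_n = i(u)$, and append the tail $u_2 u_3 \cdots u_{|u|}$. The appended Markov transitions contribute the factor $P_{u_1 u_2}\cdots P_{u_{|u|-1} u_{|u|}} = \delta_u$, so summing $\mu(C_w)\delta_u$ over $w$ and $n$ with weight $z^{n+|u|-1}$ produces the left-hand side $z^{|u|-1}\delta_u F_{i(u)}(z)$. I would then classify each extended word $X = w u_2 \cdots u_{|u|}$ according to the position $n+s$ at which its leftmost bad subword $v \in \G$ ends. Reducedness rules out two degenerate cases: $v$ cannot lie wholly inside the appended tail (else $v$ would be a proper subword of $u \in \G$), and $v$ cannot be a single symbol (every letter of $X$ is either inside the good word $w$, hence not in $\G$, or a letter of $u$, which by reducedness is not a single-letter element of $\G$). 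Thus $v \in \G \setminus \Sigma$ and $v$ must straddle the junction at position $n$, forcing the overlap condition that the terminal $s+1$ symbols of $v$ coincide with the initial $s+1$ symbols of $u$; in the paper's indexing this is precisely $c_{|v|-s-1,v,u}=1$. Summing the measures of the prefixes $X_1 \cdots X_{n+s}$ over admissible $w$ reproduces $\nu(n+s,v)$, while the remaining transitions on $u_{s+2}\cdots u_{|u|}$ contribute exactly $\delta_{|v|-s-1,v,u}$. Regrouping by $v$ and the overlap index $s_p = |v|-s-1$ and translating into generating functions yields $\sum_{v \in \G \setminus \Sigma} G_v(z)\,\tilde{\tau}_{v,u}(z)$.

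The main subtlety, and the source of the tilde in $\tilde{\tau}_{v,u}$, is the exclusion of the overlap index $s_p = |v|-1$: this trivial single-symbol overlap would place $v$ entirely inside $w$, contradicting that $w$ is good, so the corresponding term of $\tau_{v,u}$ must be removed. A secondary bookkeeping point is that the shift from $\sum_{n\ge 1}\nu(n+s,v)z^{n+s}$ to $z^{|u|-1-s}G_v(z)$ introduces no boundary correction, because $\nu(m,v)=0$ for $m<|v|$ and the straddling analysis already forces $s \le |v|-1$. Once these two checks are in place, the two generating-function identities follow, and the formula~\eqref{eq:final} takes its stated form.
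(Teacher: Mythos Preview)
Your proof is correct and follows essentially the same Guibas--Odlyzko double-counting strategy as the paper: extend a good word by a symbol (for~\eqref{eq:final0}) or by the tail of $u$ (for~\eqref{eq:final}), then classify the result according to the bad suffix that appears. The only cosmetic difference is in the second identity: the paper appends the \emph{entire} word $u$ to an arbitrary good word $w$ of length $n$ (so the concatenation has length $n+|u|$) and then tacitly absorbs the $s_p=|v|-1$ terms into the left-hand side to obtain $\mu(n+1,i(u))\delta_u$, whereas you append only $u_2\cdots u_{|u|}$ to a good word already ending in $u_1$; your route makes the exclusion of $s_p=|v|-1$ and the appearance of $\tilde{\tau}_{v,u}$ more transparent, but the two arguments are the same in substance.
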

	\begin{proof}
		Let $w=w_1\dots w_n$ be an allowed word of length $n$ in $\Sigma_A$ that does not contain any word from $\G$ as a subword. Attach a symbol $i\in\Sigma\setminus \G$ at the end of $w$. There are two possibilities. Either $w_1\dots w_n i$ does not contain a word from $\G$ as a subword, or $w_1\dots w_ni$ contains a word, say $v\in\G \setminus \Sigma$ as a subword, in which case, it can only appear at the end of $w_1\dots w_ni$ and $t(v)=i$. In the latter case, if $v \in \G \cap \Sigma$, then $v= i$, which contradicts the fact that $i \in \Sigma\setminus \G.$ Hence for each $i\in\Sigma\setminus \G$ and $n\ge 1$,
		\[
		\mu(n+1,i)+\sum_{v\in\G \setminus \Sigma} \chi_{i,t(v)} \, \nu(n+1,v)=\sum_{j\in \Sigma\setminus \G}\mu(n,j)P_{ji}.
		\]
		Take summation over $n\ge 1$ after multiplying with $z^{n+1}$ to get the desired first set of linear equations. Here we use the fact that, for $i \in \Sigma\setminus \G$, $\mu(1,i) =p_i$, and $\nu(1,v) = 0$ for all $v \in \G \setminus \Sigma$.
		
		% Similar argument applies when we attach a symbol $k \in \Sigma\setminus \G$ at the end of $w$. Here, $w_1\dots w_n k$ contains a word from $\G$ as a subword only at the end, thus using the fact that $\G$ is reduced, we get,
		%\[
		% \nu(n+1,k) = \sum_{j\in \Sigma\setminus \G}\mu(n,j)P_{ji}.
		%\]
		%Multiplying the above equation by $z^{n+1}$ and taking summation over $n \ge 1$ we get the second set of equations for each $k \in \G_1$. Here we use $\nu(1,k) = p_k$.
		
		For the second set of equations, fix a word $u \in \G \setminus \Sigma$. Let $u = u_1\dots u_\ell$, with $|u| = \ell\ge 2$. Suppose $w=w_1\dots w_n$ is an allowed word of length $n$ in $\Sigma_A$, which does not contain any word from $\G$ as a subword. Then, the word $wu$, obtained by attaching $u$ to $w$ as a suffix, can be written in either of the following two ways. Either there is a $v \in \G$ with $0< s\in(v,u)$, such that $wu = w'vt_{\ell+s-|v|}(u)$ and $w'v$ has a unique appearance of a word from $\G$ at the end, where $w'=w_1\dots w_{n-s}$ and $t_{\ell+s-|v|}(u)=u_{|v|-s+1}\dots u_\ell$ is the terminal subword of $u$ of length $\ell+s-|v|$. In this case, since $\G$ is reduced, $\ell+s-|v| \ge 1$, for all $s \in (v,u)$, and hence $t_{\ell+s-|v|}(u)$ is well defined. Or else, if no such $v$ exists, then we take $v=u$, which is the only appearance of a word from $\G$ at the end of $wu$, and in this case, $s=0\in (v,u)$. Moreover, in both cases, $v \in \G \setminus \Sigma$ due to the fact that $w$ does not contain a word from $\G$ as a subword and also that $\G$ is reduced.
		%Note that 
		%\begin{eqnarray*}
		%\sum_{w}\mu(C_{wu})&=&\sum_{w}\mu(C_w)P_{w_nu_1}\dots P_{u_{r-1}u_r}\\
		%&=& \sum_{w}\mu(C_w)P_{w_nu_1}\delta_u\\
		%&=& \left(\mu(n+1,u_1)+\sum_{v\in\G} \chi_{t(v), i(u)} \nu(n+1,v)\right)\delta_u,
		%\end{eqnarray*}
		%where the summation is over all words of length $n$ that do not contain words from $\G$ as subwords. Since $u_1 \notin \G$, $\mu(n+1,u_1)$ is well defined. Moreover, 
		%\[
		%\mu(w'vt_{\ell+s-|v|}(u))=\nu(n+|v|-s,v) P_{u_{|v|-s},u_{|v|-s+1}}\dots P_{u_{\ell-1}u_\ell}=\nu(n+|v|-s,v)\delta_{s,v,u}.
		%\]
		
		\noindent Therefore, for each $u\in \G \setminus \Sigma$ and $n\ge 0$, we have the following:
		\begin{eqnarray*}
			\mu(n+1,i(u))\delta_{u} &=& \sum_{v\in\G \setminus \Sigma}  \ \sum_{\substack{0 \, \le \, s \, < \, |v|-1, \\ s\in (v,u)}} \nu(n+|v|-s,{v}) \, \delta_{s,v,u}\\
			&=& \sum_{v\in\G \setminus \Sigma}\left(\sum_{s=0}^{|v|-1} c_{s,v,u}\nu(n+|v|-s,v)\delta_{s,v,u} - c_{|v|-1,v,u}\nu(n+1,v)\delta_{u}\right)\\
			&=& \sum_{v\in\G \setminus \Sigma}\left(\sum_{s=0}^{|v|-1} c_{s,v,u}\nu(n+|v|-s,v)\delta_{s,u} - \chi_{t(v),i(u)}\nu(n+1,v)\delta_{u}\right).
		\end{eqnarray*}
		%where $\chi_{t(u_\ell),i(u_j)}=c_{r-1,u_\ell,u_j}$, which equals 1 if and only if $t(u_\ell)=i(u_j)$.
		% and $\nu(|v|,v)=\delta_v$<---not true, it is $\mu(C_v)$$
		Note that $\nu(n,v)=0$, for all $0\le n\le |v|-1$. For each $u \in \G \setminus \Sigma$, take summation over $n\ge 0$ after multiplying with $z^{n+\ell}$ to get the desired second set of linear equations. 
	\end{proof}

	\begin{remark}\label{rem:esc_rate_as_smallest_pole}
		Clearly, the generating function $\F(z)=\sum_{a\in\Sigma\setminus \G}F_a(z)$ is a rational function. Further, by Theorems~\ref{thm:erformula} and~\ref{thm:esc_rate_as_smallest_pole}, $\lambda(B_\G\circ P_{r-1})^{-1}$ is its radius of convergence. 
		%By construction, for any $i \in \Sigma\setminus \G$ and $v \in \G\setinus \Sigma$, we have $\chi_{i,t(v)} = 0$. 
		%Moreover, since $\G$ is reduced, for $v \in \G_1$, $\tilde{\tau}_{v,u}=0$ for any $u \in \G$. Therefore, we can replace the summation over the set $\G$ from equations \eqref{eq:final0} and \eqref{eq:final} by simple summation over the set $\G \setminus \G_1$. Observe that the terms containing $G_k(z)$ for $k \in \G_1$ do not appear in the equations \eqref{eq:final0} and \eqref{eq:final}.
		We can compute the rational function $\F(z)$ by solving the system of equations given 
		in~\eqref{eq:final0} and~\eqref{eq:final}. This system has $|\Sigma\setminus \G|+|\G \setminus \Sigma|$ many linear equations in $F_i(z), G_{u}(z)$, for $i \in \Sigma\setminus \G$ and $u \in \G \setminus \Sigma$. To compute the escape rate, we find the smallest real positive pole of $\F(z)$.
		
		\noindent By arguments as in the proof of the preceding result, we get that the generating functions $G_k(z)$, for $k\in \G\cap \Sigma$ satisfy
		\[
		p_k z + G_k(z) = z\sum_{j \in \Sigma\setminus \G} F_j(z) P_{ji}.
		\]
		However these generating functions play no role in the computation of $\F(z)$. 
		%\textcolor{red}{(does the new system of linear equations avoiding $G_k$ make sense if $\G=\{k\}$, i.e., $\G$ has one word of length 1?)}. \textcolor{blue}{YES. I have  also verified with examples}
	\end{remark}
	
	\subsection{Matrix form of the system of equations~\eqref{eq:final0}, ~\eqref{eq:final}}\label{sec:mat_form}
	
	Let $\Sigma=\{1,\dots,N\}$ and $\G$ be a given finite collection of allowed words in $\Sigma_A$. Denote $\G_1=\{k_1, \dots, k_m\}$, $\G \setminus \G_1=\{u_1,\dots,u_q\}$ for some $m,\,q \ge 0$ and $\Sigma\setminus \G= \Sigma \setminus \G_1$ as before. We fix the order on each set $\Sigma\setminus \G, \, \G_1$ and $\G \setminus \G_1$. 
	
	Throughout the paper, we denote the identity matrix by $\mathbf{I}$; the size will be clear from the context. We will now write the system of equations~\eqref{eq:final0} and~\eqref{eq:final} in matrix form. The matrix form will allow us to make certain interesting observations. Recall that $\chi_{a,b}$ equals 1 if $a=b$ and 0 otherwise. The system of equations~\eqref{eq:final} can be written as 
	\begin{eqnarray}\label{eq:matrixform}
		\mathbf{M}(z) \begin{pmatrix}
			F(z) \\ G(z)
		\end{pmatrix} &=& \begin{pmatrix}
			z \mathbf{p}\vert_{\Sigma\setminus \G} \\ \mathbf{0}
		\end{pmatrix},
	\end{eqnarray}
	where $F(z)=(F_i(z))_{i \in \Sigma\setminus \G}^T$, $G(z)=(G_{u}(z))_{u\in\G\setminus\Sigma}^T$ both are column vectors, $\mathbf{p}\vert_{\Sigma\setminus \G}$ is the vector obtained from the stationary vector $\mathbf{p}$ of $P$ by removing the rows corresponding to symbols in $\G$, $\mathbf{0}$ is zero vector (column) of appropriate size, and 
	\begin{eqnarray*}
		\mathbf{M}(z)  =     \left( 
		\begin{array}{c|c} 
			\mathbf{P}(z) & \mathbf{T}(z) \\ 
			\hline 
			\mathbf{E}(z) & \mathbf{C}(z) 
		\end{array} 
		\right),
	\end{eqnarray*}
	with 
	\begin{eqnarray}
		\mathbf{P}(z) &=& \mathbf{I}-zP^T\vert_{\Sigma\setminus \G},\nonumber\\
		\mathbf{T}(z) &=& \left(\chi_{i,t(v)}\right)_{i \in \Sigma\setminus \G, \, v\in\G\setminus \Sigma}, \nonumber\\
		\mathbf{E}(z) &=& \left(z^{|u|-1}\delta_{u}\chi_{j,i(u)}\right)_{u\in\G\setminus\Sigma, \, j \in \Sigma\setminus \G}, \nonumber\\
		\mathbf{C}(z) &=& -\left(\tilde{\tau}_{u,v}\right)_{u,v\in \G\setminus\Sigma}, \label{eq:wcorrmat}
	\end{eqnarray}
	where $P\vert_{\Sigma\setminus \G}$ denotes the principal submatrix of $P$ obtained by removing all the rows and columns corresponding to the symbols in $\G$. Observe that the matrix $\mathbf{T}$ is a $0-1$ matrix independent of $z$ and records the terminal symbols of words in $\G \setminus \Sigma$. Moreover, it has a single 1 entry in each column. Further, the matrix $\mathbf{E}(z)$ has a single non-zero entry in each row. The matrix $\mathbf{C}(z)$ records the weighted correlation polynomials of words in $\G \setminus \Sigma$. Using the matrix form and inverse of a block matrix, we obtain
	\[
	\F(z) = z\sum \mathbf{P}(z)^{-1} \left[\mathbf{I} 
	+ \mathbf{T}(z) \left( \mathbf{C}(z)-\mathbf{E}(z)\mathbf{P}(z)^{-1}\mathbf{T}(z)) \right)^{-1} \mathbf{E}(z)\mathbf{P}(z)^{-1}\right] \mathbf{p}\vert_{\Sigma\setminus \G},
	\]
	provided $\mathbf{P}(z)$ and $\mathbf{C}(z)-\mathbf{E}(z)\mathbf{P}(z)^{-1}\mathbf{T}(z)$ are invertible. By Theorems~\ref{thm:location} and~\ref{thm:esc_rate_as_smallest_pole}, $\text{exp}(\rho(H_\G)$ is a root of $\text{det}(\text{det}(\textbf{P}(z))\textbf{C}(z)-\textbf{E}(z)\text{adj}(\textbf{P}(z))\textbf{T}(z))$. This is illustrated in Theorem~\ref{thm:erone} for a single cylinder as the hole, in which case, it turns out that $\text{exp}(\rho(H_\G))$ is its smallest positive real root. Moreover, in this case, it is exactly the determinant of the matrix $\mathbf{I}-zB_\G\circ P_{r-1}$ (see Theorem~\ref{thm:relate}). 
	%\textcolor{red}{Also note that for any two collections $\G$ and $\G'$ each having $k$ words of length $r$, if the corresponding matrices $\mathbf{T}(z), \mathbf{E}(z), \mathbf{C}(z)$, respectively, are identical, then the escape rate into $H_\G$ will be equal to the escape rate into $H_{\G'}$.} 
	\medskip

	\noindent The following example illustrates the results we have discussed so far.
	
	\begin{exam}\label{exam:compnew}
		Consider the SFT $\Sigma_A$ with Markov measure $\mu_P$ and a Markov hole $H_\G$ where
		\[
		\Sigma=\{1,2,3\}, \ A=\begin{pmatrix}
			1&1&1\\1&1&0\\1&1&1
		\end{pmatrix}, \ P=\begin{pmatrix}
			1/5&2/5&2/5\\
			9/10&1/10&0\\
			1/10&1/10&4/5
		\end{pmatrix}, \ \G=\{12,222,3\} \]
		and $\mathbf{p} = (3/11, 2/11,6/11)^T$ is the stationary vector of $P$.

		%   Let $\Sigma=\{1,2,3\}$, $A$ be $0-1$ matrix indexed by $\Sigma$ such that $A_{23} =0$ and all other entries are $1$. Let the Markov measure $\mu_P$ on $\Sigma_A$ be given by \[P=\begin{pmatrix}
			%1/5&2/5&2/5\\
			%9/10&1/10&0\\
			%1/10&1/10&4/5
			%\end{pmatrix}, \ \ \text{with stationary vector} \ \ p = %(3/11, 2/11,6/11)^T.  \] 
			%Let $\G=\{12,222,3\}$, 

			\begin{itemize}
				\item Calculation of escape rate into $H_\G$ using Theorem~\ref{thm:esc_rate_as_smallest_pole}: \\
				Let $u=12$ and $v=222$, the elements of $\G\setminus\Sigma$. Simple computations give $\tilde{\tau}_{u,v}(z) = \tilde{\tau}_{v,u}(z) =0$, $\tilde{\tau}_{u,u}(z)=1$, $\tilde{\tau}_{v,v}(z)=1+z/10$. Hence the system of linear equations~\eqref{eq:final0},~\eqref{eq:final} can be written as 
				\[
				\left( 
				\begin{array}{c|c} 
					\begin{matrix}
						1-\frac{z}{5} & -\frac{9z}{10}\\
						-\frac{2z}{5} & 1-\frac{z}{10}
					\end{matrix} & \begin{matrix}
						0 \  \ \ \ & 0\\
						1 \ \ \  \ & 1
					\end{matrix} \\ 
					\hline 
					\begin{matrix}
						\frac{9z}{10} \ \ & 0  \\
						0 \ \ & \frac{2z^2}{100} 
					\end{matrix} & \begin{matrix}
						-1 & 0 \\
						0 & -1-\frac{z}{10}
					\end{matrix}
				\end{array}
				\right)
				\begin{pmatrix}
					F_1(z)\\F_2(z)\\G_{u}(z)\\G_{v}(z)
				\end{pmatrix}=\begin{pmatrix}
					\frac{3z}{11}\\ \frac{2z}{11}\\0\\0
				\end{pmatrix}
				\]
				Solving these, we get \[
				\F(z) = F_1(z)+F_2(z)=\dfrac{z(7z^2+80z+250)}{110(z-5)}. 
				\] The smallest real pole of $\F(z)$ is 5, hence $\rho(H_\G) = \ln (5)$. 
				\item  Calculation of escape rate into $H_\G$ using Theorem~\ref{thm:erformula}:\\
				Since $r = 3$, we need to compute the spectral radius of $B_{\G} \circ P_2$. Since all the rows and columns having labels containing a word of $\G$ are 0 in $\B_\G$, the spectral radius of $B_{\G} \circ P_2$ is the same as the spectral radius of its submatrix only with rows and columns having labels $11,21$ and $22$. The only non-zero entries of this submatrix are,
				\[
				(B_{\G} \circ P_2)_{(11)(11)} = (B_{\G} \circ P_2)_{(21)(11)} = 1/5, \ \text{ and} \ (B_{\G} \circ P_2)_{(22)(21)} = 9/10. 
				\]
				Hence $\rho(H_\G)=-\ln(\lambda(B_\G\circ P_2))=\ln(5)$.
				Thus this method turns out to be much simpler than using the system of equations. 
				%\item  Calculation of escape rate into $H_\G$ using Theorem~\ref{thm:erformula}:\\ Note that since $r = 3$, we consider $B_{\G} \circ P_2$ of size $7$, indexed by allowed words of length $2$ in $\Sigma_A$ with entries given as follows.
				%\[
				%   (B_{\G} \circ P_2)_{(11)(11)} = (B_{\G} \circ P_2)_{(21)(11)} = 1/5, \ (B_{\G} \circ P_2)_{(22)(21)} = 9/10, 
				%\]
				%and all the remaining entries are zero.\\ Hence $\rho(H_\G)=-\ln(\lambda(B_\G\circ P_2))=\ln(5)$.
			\end{itemize}
		\end{exam}

		%\begin{exam}\label{exam:comp}
		%Consider the SFT $\Sigma_A$ with a Markov measure $\mu_P$ and a Markov hole $H_\G$ where
		%\[
		%\Sigma=\{1,2\}, \ A=\begin{pmatrix}
			%	0&1\\1&1
			%\end{pmatrix}, \ P=\begin{pmatrix}
			%0&1\\1/3&2/3
			%\end{pmatrix},\ \G=\{122,212\}. 
			%\]
			%The stationary vector of $P$ is $\textbf{p}=(1/4,3/4)^T$. 
			%\begin{itemize}
			%	\item Calculation of the escape rate into $H_\G$ using Theorem~\ref{thm:erformula}:\\
			%	Here $r=3$. Note \renewcommand{\kbldelim}{(}% Left delimiter
			%		\renewcommand{\kbrdelim}{)}% Right delimiter  
			%	\[
			%	B_\G\circ P_2=\kbordermatrix{
				%& 12 & 21 & 22\\
				%		12&0&1/3&0\\21&0&0&0\\22&0&1/3&2/3
				%}.
			%\]
			%Hence $\rho(H_\G)=-\ln(\lambda(B_\G\circ P_2))=\ln(3/2)$.
			%\item Calculation of the escape rate into $H_\G$ using Theorem~\ref{thm:esc_rate_as_smallest_pole}:\\
			%Let $u=122$ and $v=212$. Then $\tilde{\tau}_{u,u}(z)=1$, $\tilde{\tau}_{u,v}(z)=0$, $\tilde{\tau}_{v,u}(z)=2z/3$, $\tilde{\tau}_{v,v}(z)=1$. Hence the system of linear equations~\eqref{eq:final} can be written as 
			%	\[
			%	\begin{pmatrix}
				%		1&-\frac{z}{3}&0&0\\-z&1-\frac{2z}{3}&1&1\\\frac{2z^2}{3}&0&-1&-\frac{2z}{3}\\0&\frac{2z^2}{3}&0&-1
				%	\end{pmatrix}\begin{pmatrix}
				%		F_1(z)\\F_2(z)\\G_{u}(z)\\G_{v}(z)
				%	\end{pmatrix}=\begin{pmatrix}
				%		\frac{z}{4}\\\frac{3z}{4}\\0\\0
				%	\end{pmatrix}
			%	\]
			%Solving these, we get \[
			%\F(z) = F_1(z)+F_2(z)=\dfrac{z(2z^3+3z^2-12z-36)}{12(2z-3)}. 
			%\]
			%Note that the smallest real pole of $\F(z)$ is 3/2. Hence $\rho(H_\G)=\ln(3/2)$.
			%\end{itemize}
			%\end{exam}

			\subsection{Escape rate into a cylinder}
			In this section, we restrict ourselves to the case where the collection $\G$ contains only one word, say $u = u_1 \dots u_r$, $r \ge 2$. Let $\textbf{e}_i$ denote the column vector having its $i^{th}$ entry $1$ and the rest of its entries $0$. The size of $\textbf{e}_i$ will be clear from the context. 
			
			Before we proceed to one of our main results for another method to compute the escape rate into a single cylinder, we recall formulas (see~\cite{Higham}) for the inverse and the determinant of the matrix obtained by performing a rank one perturbation to an invertible matrix.
			
			\begin{lemma}\label{lemma:det_inv_sum}
				Let $A$ and $B = \alpha \mathbf{e}_i\mathbf{e}_j^T$ be two matrices of size $n \ge 2$, where $\mathbf{e}_i$ and $\mathbf{e}_j$ are column vectors of size $n$ as defined earlier and $\alpha$ is a scalar. Note that $B$ has rank 1.
				\begin{itemize}
					\item Determinant of $A+B$ is given by $\text{det}(A+B)=\text{det}(A)+\alpha \mathbf{e}_j^T\text{adj}(A)\mathbf{e}_i$.
					\item Sherman-Morrison formula for inverse: If $A$ and $A+B$ are invertible, then the inverse of $A+B$ is given by
					\[
					(A+B)^{-1}=A^{-1} - \frac{\alpha}{1+\alpha \mathbf{e}_j^T A^{-1} \mathbf{e}_i} A^{-1} \mathbf{e}_i \mathbf{e}_j^T A^{-1} .
					\]
				\end{itemize}
				Note that $\mathbf{e}_i \mathbf{e}_j^T$ is a square matrix with all its entries 0 except for its $ij^{th}$ entry, which is 1, and $\mathbf{e}_j^T\text{adj}(A)\mathbf{e}_i$ gives the $ji^{th}$ entry of $\text{adj}(A)$, the adjugate matrix of $A$.
			\end{lemma}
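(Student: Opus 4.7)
Both claims are standard rank-one update identities, so the plan is to derive them cleanly by reducing the computations to a single scalar expression and then expanding.

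For the determinant formula, the plan is first to handle the generic case when $A$ is invertible. Writing $B=\alpha\mathbf{e}_i\mathbf{e}_j^T=uv^T$ with $u=\alpha\mathbf{e}_i$ and $v=\mathbf{e}_j$, I would factor
\[
\det(A+uv^T)=\det(A)\det(\mathbf{I}+A^{-1}uv^T),
\]
and then invoke Sylvester's determinant identity $\det(\mathbf{I}_n+XY)=\det(\mathbf{I}_m+YX)$ with $X=A^{-1}u$ a column and $Y=v^T$ a row. This collapses the right-hand determinant to the $1\times 1$ scalar $1+v^T A^{-1}u=1+\alpha\mathbf{e}_j^T A^{-1}\mathbf{e}_i$. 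Multiplying through by $\det(A)$ and using the cofactor identity $\det(A)A^{-1}=\text{adj}(A)$ gives the claimed formula
\[
\det(A+\alpha\mathbf{e}_i\mathbf{e}_j^T)=\det(A)+\alpha\mathbf{e}_j^T\text{adj}(A)\mathbf{e}_i.
\]
To extend to singular $A$, I would note that both sides are polynomial in the entries of $A$; perturbing $A$ by $\varepsilon\mathbf{I}$ makes it invertible for all but finitely many $\varepsilon$, and letting $\varepsilon\to 0$ gives the identity for arbitrary $A$ by continuity of the polynomial identity.

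For the Sherman--Morrison inverse formula, assuming both $A$ and $A+\alpha\mathbf{e}_i\mathbf{e}_j^T$ are invertible (equivalently, by the determinant formula just proved, $1+\alpha\mathbf{e}_j^T A^{-1}\mathbf{e}_i\neq 0$), the plan is direct verification: multiply
\[
(A+\alpha\mathbf{e}_i\mathbf{e}_j^T)\left(A^{-1}-\frac{\alpha\,A^{-1}\mathbf{e}_i\mathbf{e}_j^T A^{-1}}{1+\alpha\mathbf{e}_j^T A^{-1}\mathbf{e}_i}\right)
\]
and confirm it simplifies to $\mathbf{I}$. Expanding yields four terms; three of them combine as
\[
\mathbf{I}+\alpha\mathbf{e}_i\mathbf{e}_j^T A^{-1}-\frac{\alpha\,\mathbf{e}_i\mathbf{e}_j^T A^{-1}+\alpha^2\mathbf{e}_i(\mathbf{e}_j^T A^{-1}\mathbf{e}_i)\mathbf{e}_j^T A^{-1}}{1+\alpha\mathbf{e}_j^T A^{-1}\mathbf{e}_i},
\]
where the scalar $\mathbf{e}_j^T A^{-1}\mathbf{e}_i$ can be pulled out of the rank-one factor, and the resulting numerator factors as $\alpha(1+\alpha\mathbf{e}_j^T A^{-1}\mathbf{e}_i)\mathbf{e}_i\mathbf{e}_j^T A^{-1}$, which cancels the middle term and leaves $\mathbf{I}$, as required.

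There is essentially no hard step here, since both identities are algebraic. The only subtlety is making sure the scalar $1+\alpha\mathbf{e}_j^T A^{-1}\mathbf{e}_i$ is nonzero precisely when $A+B$ is invertible, which is ensured by the determinant identity proved first; the application of Sylvester's identity and the continuity/density extension to singular $A$ handle all edge cases.
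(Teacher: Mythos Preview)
Your proof is correct and follows the standard route (Sylvester's determinant identity for the matrix determinant lemma, then direct verification for Sherman--Morrison, with the density/continuity argument handling singular $A$). The paper, however, does not prove this lemma at all: it is stated as a recalled fact with a citation to~\cite{Higham}, so there is no in-paper proof to compare against. Your write-up would serve perfectly well as a self-contained justification should one be desired.
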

			
			We need the following lemma, which follows from the basic facts of linear algebra.
			
			\begin{lemma}\label{lemma:adj}
				Let $P$ be an irreducible stochastic matrix of size $N$ with stationary vector $\mathbf{p}$. Then the adjugate matrix of $\mathbf{I}-P$ is given by $\text{adj}(\mathbf{I}-P)=c\mathbf{1}\textbf{p}^T$, where $c=\lim_{z\rightarrow 1}(1-z)^{-1}\text{det}(\mathbf{I}-zP)$ which is positive.
			\end{lemma}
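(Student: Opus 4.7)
The plan is first to pin down the structure of the adjugate, and then to compute the scalar $c$ and prove its positivity.

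First, I would exploit that $P$ is an irreducible stochastic matrix, so by Perron-Frobenius the eigenvalue $1$ of $P$ is simple and $\mathbf{I}-P$ has rank exactly $N-1$; consequently $\text{adj}(\mathbf{I}-P)$ has rank $1$. The relations $(\mathbf{I}-P)\,\text{adj}(\mathbf{I}-P)=\det(\mathbf{I}-P)\,\mathbf{I}=0$ and $\text{adj}(\mathbf{I}-P)(\mathbf{I}-P)=0$ force every column of $\text{adj}(\mathbf{I}-P)$ to lie in $\ker(\mathbf{I}-P)=\text{span}\{\mathbf{1}\}$ and every row of $\text{adj}(\mathbf{I}-P)$ to lie in the left kernel, which is $\text{span}\{\mathbf{p}^T\}$. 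Combining these with the rank-one conclusion gives $\text{adj}(\mathbf{I}-P)=c\,\mathbf{1}\mathbf{p}^T$ for some scalar $c$.

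Next, to identify $c$ with the stated limit, I would invoke Jacobi's formula for the derivative of the determinant. Applied to $A(z)=\mathbf{I}-zP$, it yields
\[
\frac{d}{dz}\det(\mathbf{I}-zP)= -\text{tr}\bigl(\text{adj}(\mathbf{I}-zP)\,P\bigr).
\]
Since $\det(\mathbf{I}-P)=0$, a Taylor expansion at $z=1$ gives
\[
\det(\mathbf{I}-zP) = \text{tr}\bigl(\text{adj}(\mathbf{I}-P)\,P\bigr)\,(1-z) + O\!\bigl((1-z)^{2}\bigr),
\]
so that $\lim_{z\to 1}(1-z)^{-1}\det(\mathbf{I}-zP)=\text{tr}(\text{adj}(\mathbf{I}-P)\,P)$. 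Substituting $\text{adj}(\mathbf{I}-P)=c\,\mathbf{1}\mathbf{p}^T$ and using $\mathbf{p}^TP=\mathbf{p}^T$ together with $\mathbf{p}^T\mathbf{1}=1$, I get $\text{tr}(c\,\mathbf{1}\mathbf{p}^T P)=c\,\mathbf{p}^T\mathbf{1}=c$, matching the claimed formula.

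Finally, for positivity, I would compute a single diagonal entry of the adjugate: $[\text{adj}(\mathbf{I}-P)]_{ii}$ is the $(i,i)$-cofactor of $\mathbf{I}-P$, which equals $\det(\mathbf{I}_{N-1}-P(i))$, where $P(i)$ is the principal submatrix of $P$ obtained by deleting row and column $i$. Irreducibility of $P$ implies (by Perron-Frobenius for proper substochastic submatrices) that $\lambda(P(i))<1$, so every eigenvalue $\mu$ of $P(i)$ satisfies $|\mu|<1$; the real ones lie in $(-1,1)$ giving positive factors $1-\mu$, and complex ones contribute conjugate pairs with product $|1-\mu|^{2}>0$, so $\det(\mathbf{I}_{N-1}-P(i))>0$. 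Comparing with $[\text{adj}(\mathbf{I}-P)]_{ii}=c\,p_i$ and noting $p_i>0$, we conclude $c>0$. The main obstacle in this plan is the careful justification of the rank/kernel structure and the sign of the principal-minor determinant via the eigenvalue bound on $P(i)$; both are standard but must be stated cleanly for the argument to be airtight.
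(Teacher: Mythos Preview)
Your argument is correct in all three parts: the rank-one structure of $\text{adj}(\mathbf{I}-P)$ from the simple Perron eigenvalue, the identification of $c$ via Jacobi's formula, and the positivity via the principal minor $\det(\mathbf{I}_{N-1}-P(i))$.

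There is nothing to compare against: the paper does not prove this lemma, stating only that it ``follows from the basic facts of linear algebra.'' Your write-up supplies precisely those facts. One small remark: your appeal to $\lambda(P(i))<1$ is exactly the strict inequality in Theorem~\ref{thm:interlacing_non-negative} (already quoted in the paper) applied to the irreducible matrix $P$, so you may cite that directly rather than invoking a separate substochastic Perron--Frobenius statement.
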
 
			
			\begin{theorem}\label{thm:erone}
				Let $u=u_1\dots u_r$ be an allowed word in $\Sigma_A$ of length $r \ge 2$. Let $z_0$ be the smallest real root of the polynomial
				\begin{equation}\label{eq:polynomial}
					f_u(z):=\tilde{\tau}_{u}(z)\text{det}(\mathbf{I}-zP)+ z^{r-1}\delta_{u}\mathbf{e}_{u_r}^T\text{adj}(\mathbf{I}-zP)\mathbf{e}_{u_1},
				\end{equation}
				where $\tilde{\tau}_{u}(z)=\tau_{u}(z)- z^{r-1} c_{r-1,u,u}\delta_u$. Then, the escape rate $\rho(C_u)$ into the cylinder based at $u$ with respect to the Markov measure $\mu_P$ is given by $\ln z_0$.
			\end{theorem}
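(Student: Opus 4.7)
My plan is to specialize the matrix form~\eqref{eq:matrixform} of the system derived in Section~\ref{sec:mat_form} to the case $\G = \{u\}$, compute $\det(\mathbf{M}(z))$ explicitly, and then combine Theorems~\ref{thm:esc_rate_as_smallest_pole} and~\ref{thm:erformula} with the Perron-Frobenius theorem to pin down $z_0 = \exp(\rho(C_u))$ as the smallest positive real root of $f_u(z)$.

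When $\G = \{u\}$ with $|u| = r \ge 2$, we have $\Sigma \setminus \G = \Sigma$ (of size $N$) and $\G \setminus \Sigma = \{u\}$, so $\mathbf{M}(z)$ reduces to the $(N+1) \times (N+1)$ block matrix
\[
\mathbf{M}(z) = \begin{pmatrix} \mathbf{I} - zP^T & \mathbf{e}_{u_r} \\ z^{r-1}\delta_u\,\mathbf{e}_{u_1}^T & -\tilde{\tau}_u(z) \end{pmatrix}.
\]
The first step is to apply the block determinant identity $\det\!\begin{pmatrix} A & b \\ c^T & d \end{pmatrix} = d\,\det(A) - c^T\,\text{adj}(A)\,b$ (valid by polynomial identity, regardless of invertibility of $A$), together with the observation that $\mathbf{e}_{u_1}^T\text{adj}(\mathbf{I}-zP^T)\mathbf{e}_{u_r} = \mathbf{e}_{u_r}^T\text{adj}(\mathbf{I}-zP)\mathbf{e}_{u_1}$ since $\text{adj}(X^T) = \text{adj}(X)^T$. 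A direct calculation then yields $\det(\mathbf{M}(z)) = -f_u(z)$.

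By Cramer's rule applied to~\eqref{eq:matrixform}, each $F_i(z)$, and hence $\F(z) = \sum_{i \in \Sigma} F_i(z)$, is a rational function whose denominator divides $f_u(z)$. Therefore every pole of $\F(z)$ is a root of $f_u(z)$. Theorem~\ref{thm:esc_rate_as_smallest_pole} guarantees that $z_0 := \exp(\rho(C_u))$ is a positive real pole of $\F(z)$ and hence a root of $f_u(z)$. To identify $z_0$ as the \emph{smallest} positive real root, I would combine Theorem~\ref{thm:erformula}, which gives $z_0 = 1/\lambda(B_\G \circ P_{r-1})$, with the identity $f_u(z) = \det(\mathbf{I} - z\, B_\G \circ P_{r-1})$ that appears separately as Theorem~\ref{thm:relate}. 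The roots of $f_u(z)$ are then exactly the reciprocals of the non-zero eigenvalues of $B_\G \circ P_{r-1}$, and the Perron-Frobenius theorem applied to this non-negative matrix forces every such eigenvalue $\mu$ to satisfy $|\mu| \le \lambda(B_\G \circ P_{r-1})$, so that every non-zero root of $f_u(z)$ has modulus at least $z_0$.

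The main obstacle I anticipate is the determinantal identity $f_u(z) = \det(\mathbf{I} - z\, B_\G \circ P_{r-1})$, which links the combinatorial weighted autocorrelation polynomial $\tilde{\tau}_u(z)$ to the characteristic polynomial of the perturbed stochastic matrix at the higher block level; proving it requires a careful cofactor expansion that tracks the overlap structure of $u$ with itself. A secondary technical concern is ruling out accidental cancellations in $\F(z)$ that could remove $z_0$ as a pole, but this is automatic from Theorem~\ref{thm:esc_rate_as_smallest_pole}, which identifies $z_0$ as a genuine pole rather than merely a formal root of the denominator.
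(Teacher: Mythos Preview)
Your argument is correct, but it is organized differently from the paper's own proof. The paper does \emph{not} invoke Theorem~\ref{thm:relate}; instead it computes $\F(z)$ explicitly via the Sherman--Morrison formula, obtains the closed form~\eqref{eq:sumrat}, checks by hand that $z=1$ is a removable singularity, and then uses the location bound of Theorem~\ref{thm:location} together with a case analysis on whether $P$ has a second positive real eigenvalue to show that the smallest real root $z_0$ of $f_u$ is actually a pole of $\F(z)$. Your route is cleaner: you read off $\det(\mathbf{M}(z))=-f_u(z)$ directly from the block determinant, use Cramer's rule and Theorem~\ref{thm:esc_rate_as_smallest_pole} to place $\exp(\rho(C_u))$ among the roots of $f_u$, and then appeal to Theorem~\ref{thm:relate} and Perron--Frobenius to identify it as the root of smallest modulus. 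This avoids both the Sherman--Morrison computation and the case split, but it shifts all the work into Theorem~\ref{thm:relate}, whose proof (Section~\ref{sec:relate}) is independent of Theorem~\ref{thm:erone}, so there is no circularity. The paper's approach, by contrast, keeps Theorem~\ref{thm:erone} logically prior to and independent of the determinantal identity, and yields the explicit rational expression~\eqref{eq:sumrat} for $\F(z)$ as a by-product.

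One small point: your Perron--Frobenius step shows that $\exp(\rho(C_u))$ is the root of $f_u$ of smallest \emph{modulus}, hence in particular the smallest \emph{positive} real root; the theorem statement says ``smallest real root'', which tacitly excludes possible negative real roots coming from negative eigenvalues of $B_\G\circ P_{r-1}$. The paper's proof has the same implicit reading.
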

			
			\begin{proof}
				Let $\textbf{p}$ be the stationary vector of $P$.
				The system of equations~\eqref{eq:matrixform} is given by
				\begin{eqnarray*}
					\left( 
					\begin{array}{c|c} 
						\mathbf{I}-zP^T& \mathbf{e}_{u_r} \\ 
						\hline 
						z^{r-1}\delta_u\mathbf{e}_{u_1}^T & -\tilde{\tau}_u(z) 
					\end{array} 
					\right)\begin{pmatrix}
						F(z) \\ G_u(z) 
					\end{pmatrix} = \begin{pmatrix}
						z\textbf{p}\\ 0
					\end{pmatrix},
				\end{eqnarray*} where $F(z)=(F_1(z)\dots F_N(z))^T$.
				Hence, we can compute that 
				\begin{eqnarray}\label{eq:Fproof}
					F(z)&=&z\tilde{\tau}_u(z)L(z)^{-1}\textbf{p},
				\end{eqnarray}
				where $L(z)=\tilde{\tau}_u(z)(\mathbf{I}-zP^T)+z^{r-1}\delta_u\mathbf{e}_{u_r}\mathbf{e}_{u_1}^T$. Observe that by Lemma~\ref{lemma:det_inv_sum}, $\text{det}(L(z))=\tilde{\tau}_u(z)^{N-1}f_u(z)$, which is a nonzero polynomial in $z$ (for instance, at $z=1$, it equals $\tilde{\tau}_u(1)^{N-1}f_u(1)=\tilde{\tau}_u(1)^{N-1}\mu(u)\ne 0$). 
				
				If $\mathbf{I}-zP$ is invertible, using the Sherman-Morrison formula for $L(z)$ (from Lemma~\ref{lemma:det_inv_sum}), we get 
				\begin{eqnarray}\label{eq:Llinear}
					L(z)^{-1} = \left((\mathbf{I}-zP^T)^{-1} - \frac{z^{r-1} \delta_u D(z)}{f_u(z)} (\mathbf{I}-zP^T)^{-1} \mathbf{e}_{u_r}\mathbf{e}_{u_1}^T(\mathbf{I}-zP^T)^{-1}\right), \nonumber
				\end{eqnarray}
				where $D(z)=\text{det}(\mathbf{I}-zP^T)$.
				
				Post-multiplying and pre-multiplying $(\mathbf{I}-zP)^{-1}$ in $\textbf{p}^T = \textbf{p}^T P$ and $P\mathbf{1}=\mathbf{1}$, and substituting $P = \frac{1}{z}(zP-\mathbf{I}+\mathbf{I})$, we get
				\[
				\sum (\mathbf{I}-zP^T)^{-1}\textbf{p} = \dfrac{1}{1-z}, \ \ \sum (\mathbf{I}-zP^T)^{-1} \mathbf{e}_{u_r}\mathbf{e}_{u_1}^T(\mathbf{I}-zP^T)^{-1}\textbf{p}= \dfrac{p_{u_1}}{(1-z)^2}.
				\]
				Substituting these in~\eqref{eq:Fproof}, we get \begin{eqnarray}\label{eq:sumrat}
					\F(z) =	\sum_{a\in\Sigma} F_a(z) 
					&=& z\tilde{\tau}_u(z) \left( \dfrac{1}{1-z} - \dfrac{z^{r-1} \delta_u D(z)}{f_u(z)}\dfrac{ p_{u_1}}{(1-z)^2} \right)\nonumber\\
					&=& \dfrac{z\tilde{\tau}_u(z)}{(1-z)f_u(z)} \left( f_u(z)-z^{r-1}p_{u_1}\delta_u\dfrac{D(z)}{1-z} \right).
				\end{eqnarray}
				Observe that by Lemma~\ref{lemma:adj} we have,
				\[
				\lim_{z\rightarrow 1} f_u(z)-z^{r-1}p_{u_1}\delta_u\dfrac{D(z)}{1-z} = f_u(1)-p_{u_1}\delta_u \lim_{z\rightarrow 1}\dfrac{D(z)}{1-z} = 0,
				\]
				This implies that $z=1$ is not a pole for $\F(z)$, which is indeed the case, by Theorem~\ref{thm:esc_rate_as_smallest_pole} since $\text{exp}(\rho(C_u))$ is the smallest pole of $\F(z)$, which is real and greater than 1. 
				%Hence, we can restrict our attention to real $z>1$ while analyzing the rational function form on the right. Observe that $\tilde{\tau}_u(z)>0$ for all real $z>1$.  
				By the form of $\F(z)$ as a rational function, $\text{exp}(\rho(C_u))$ is a real root of $f_u(z)$. Hence, $\text{exp}(\rho(C_u))\ge z_0$ which is the smallest real root of $f_u$. We need to prove that they are equal. Clearly, $z_0>1$, as otherwise $\F(z)$ will have its smallest pole less than one which is a contradiction. We consider two cases now. 
				% Recalling Theorem~\ref{thm:location}, we know that $\exp(\rho(C_u))\in (1,\theta^{-1}]$, where $\theta$ is the second largest real eigenvalue of $P$ (if it exists). Hence 
				%clearly, $f_u$ has a root in this interval. Also, if $z_0<1$, then $\sum_{a\in \Sigma}F_a(z)$ will have its smallest pole less than one which is a contradiction. Hence, we have $z_0\in(1,\theta^{-1}]$.
				%If $z_0\ne\theta^{-1}$, 
				% \[
				%f_u(z_0)-z_0^{r-1}p_{u_1}\delta_u \dfrac{D(z_0)}{1-z_0}\ne 0,
				%\] and hence $\text{exp}(\rho(C_u))=z_0$. Otherwise, if $z_0=\theta^{-1}$, then again by Theorem~\ref{thm:location}, $\text{exp}(\rho(C_u))=z_0$. 
				\begin{itemize}
					\item If 1 is the only real positive eigenvalue of $P$, %then if $z_0>1$ is a real root of $f_u(z)$, 
					we have 
					\[
					f_u(z_0)-z_0^{r-1}p_{u_1}\delta_u \dfrac{D(z_0)}{1-z_0}\ne 0,
					\]
					hence $z_0$ is a pole of $\F(z)$. Thus $\text{exp}(\rho(C_u))\le z_0$.
					\item If $\theta$ is the second largest positive real eigenvalue of $P$ other than 1, recalling Theorem~\ref{thm:location}, we know that $\exp(\rho(C_u))\in (1,\theta^{-1}]$, and hence $z_0\in(1,\theta^{-1}]$. \\
					If $z_0=\theta^{-1}$, then $\text{exp}(\rho(C_u))=\theta^{-1}$. On the other hand, if $z_0\in (1,\theta^{-1})$, then 
					\[
					f_u(z_0)-z_0^{r-1}p_{u_1}\delta_u \dfrac{D(z_0)}{1-z_0}\ne 0,
					\]
					hence $z_0$ is a pole of $\F(z)$. Thus $\text{exp}(\rho(C_u))\le z_0$.%Thus $\text{exp}(\rho(C_u))$ is the smallest positive real root of $f_u(z)$.
				\end{itemize}
			\end{proof}
			
			%\begin{remark}\label{rem:primes-cycles}
			%	If the word $u$ is such that $\tilde{\tau}_{u,u}(z)=1$, that is either $u$ is prime or $\tau_{u,u}(z)=1+z^{r-1}\delta_u$, then 	\[
			%	f_u(z)=\text{det}(\mathbf{I}-zP)+ z^{r-1}\delta_{u}\mathbf{e}_{u_r}^T\text{adj}(\mathbf{I}-zP)\mathbf{e}_{u_1}.
			%	\]
			%	Hence, the escape rate into the Markov hole $C_u$ depends only on the polynomial $\mathbf{e}_{u_1}^T\text{adj}(\mathbf{I}-zP)\mathbf{e}_{u_r}$, which in turn depends only on the initial and terminal symbol of the word $u$. 
			%\end{remark}

			\begin{remark}\label{rem:compareboth}
				If the word $u$ has length 1, the spectral radius of $B_\G\circ P$ is just the spectral radius of the principal minor of $P$ obtained by removing the row and column of $P$ with label $u$. Also, if the word $u$ has length 2, by Lemma~\ref{lemma:det_inv_sum}, $\text{det}(\mathbf{I}-zB_\G\circ P)=f_{u}(z)$. Hence Theorem~\ref{thm:erone} is particularly useful when $r\ge 3$. When $\G$ contains a single word $u$ of length $r>2$, the identity $\text{det}(\mathbf{I}-zB_\G\circ P_{r-1})=f_{u}(z)$ still holds. This requires a series of lemmas. We state the main theorem here, and the details are given in Section~\ref{sec:relate}.
				%     such comparison is not possible as we are looking at the higher block code representation of $P$ (this statement contradicts what we are trying to do in Section 6.2).}
		\end{remark}
		
		\begin{theorem}\label{thm:relate}
			For $u=u_1\dots u_{r}$ and $\G=\{u\}$, 
			\[
			\text{det}(\mathbf{I}-zB_\G\circ P_{r-1})=f_u(z).
			\]
		\end{theorem}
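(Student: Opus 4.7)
My plan is to obtain the identity via a rank-one matrix determinant perturbation, supplemented by two identities relating data on $P_{r-1}$ to data on $P$. Since $\G = \{u\}$ and $|u|=r$, the only entry of $P_{r-1}$ that is zeroed out in $B_\G\circ P_{r-1}$ is the $(U,V)$-entry, where $U = u_1\dots u_{r-1}$ and $V = u_2\dots u_r$; its original value is $P_{u_{r-1}u_r}$. Thus $B_\G\circ P_{r-1} = P_{r-1} - P_{u_{r-1}u_r}\mathbf{e}_U\mathbf{e}_V^T$, and Lemma~\ref{lemma:det_inv_sum} yields
\[
\text{det}(\mathbf{I} - zB_\G\circ P_{r-1}) = \text{det}(\mathbf{I} - zP_{r-1}) + zP_{u_{r-1}u_r}\,[\text{adj}(\mathbf{I} - zP_{r-1})]_{V,U}.
\]

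The first supporting identity to establish is $\text{det}(\mathbf{I} - zP_{r-1}) = \text{det}(\mathbf{I} - zP)$. This follows from a trace comparison: closed walks of length $n$ in the higher-block graph $G_{A_{r-1}}$ correspond bijectively, with equal weight, to closed walks of length $n$ in $G_A$, since both parametrize periodic sequences of period $n$ in $\Sigma_A$. Hence $\text{Tr}(P_{r-1}^n) = \text{Tr}(P^n)$ for all $n\ge 1$, which (via Newton's identities) forces the nonzero spectra of $P$ and $P_{r-1}$ to coincide with multiplicity; the extra zero eigenvalues of $P_{r-1}$ contribute only trivial $(1-z\cdot 0)$ factors to the characteristic polynomial, so the two determinants agree as polynomials in $z$.

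The heart of the proof is the second identity
\[
zP_{u_{r-1}u_r}\,[\text{adj}(\mathbf{I} - zP_{r-1})]_{V,U} = (\tilde\tau_u(z) - 1)\,\text{det}(\mathbf{I} - zP) + z^{r-1}\delta_u\,[\text{adj}(\mathbf{I} - zP)]_{u_r,u_1}.
\]
I would prove it by writing $[\text{adj}(\mathbf{I} - zP_{r-1})]_{V,U} = \text{det}(\mathbf{I}-zP_{r-1})\,[(\mathbf{I}-zP_{r-1})^{-1}]_{V,U}$ and expanding via the Neumann series $(\mathbf{I}-zP_{r-1})^{-1} = \sum_{n\ge 0} z^nP_{r-1}^n$. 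Each entry $(P_{r-1}^n)_{V,U}$ is a weighted count of paths of length $n$ from $V$ to $U$ in the block graph, equivalently of finite words in $\Sigma_A$ of length $n+r-1$ starting with $u_2\dots u_r$ and ending with $u_1\dots u_{r-1}$. These words fall into two combinatorial families: short overlaps of $u$ with itself (whose contributions assemble into $\tilde\tau_u(z) - 1$, reproducing the terms of $\tau_u(z)$ other than the constant and the extremal $z^{r-1}c_{r-1,u,u}\delta_u$ that is subtracted out) and longer words that contain a complete copy of $u$ (contributing the $z^{r-1}\delta_u\,[\text{adj}(\mathbf{I}-zP)]_{u_r,u_1}$ term after resumming the random walk from $u_r$ back to $u_1$ using $(\mathbf{I}-zP)^{-1}$). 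Substituting both identities into the rank-one formula reproduces $f_u(z)$ exactly.

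The principal obstacle is this second identity: the cofactor expansion of the adjugate entry of $\mathbf{I}-zP_{r-1}$ involves sums over permutations on $\mathcal{L}_{r-1}$, and decomposing them into contributions indexed by the overlap set $(u,u)$ versus long-return contributions requires careful bookkeeping of signs and normalizations. As Remark~\ref{rem:compareboth} signals, I anticipate this being executed through a sequence of technical lemmas, with the base case $r=2$ (where $\tilde\tau_u(z)=1$) reducing to the direct rank-one update already used in the proof of Theorem~\ref{thm:erone}.
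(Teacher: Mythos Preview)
Your proposal is correct and follows essentially the same route as the paper: rank-one perturbation via Lemma~\ref{lemma:det_inv_sum}, the trace identity $\text{Tr}(P_{r-1}^n)=\text{Tr}(P^n)$ for $\text{det}(\mathbf{I}-zP_{r-1})=\text{det}(\mathbf{I}-zP)$, and then computing the adjugate entry by expanding in powers of $P_{r-1}$ and splitting at $k=r-2$ (exactly the content of the paper's Lemma~\ref{lemma:powerP}). The only difference is packaging: the paper expresses the adjugate as a finite sum $\sum \alpha_{j+k+1}z^{N-1-j}M^k$ via Lemma~\ref{lemma:det-adj} and does the bookkeeping algebraically, whereas your Neumann-series resummation of $\sum_{k\ge r-1}\delta_u z^{k+1}(P^{k-r+2})_{u_ru_1}$ into $z^{r-1}\delta_u[(\mathbf{I}-zP)^{-1}]_{u_ru_1}$ (with the $\chi_{u_1,u_r}$ correction cancelling the extremal term of $\tau_u$) is a slightly more direct way to arrive at the same identity.
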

		
		\begin{remark}
			At this juncture, we point out an interesting relation of the polynomial $f_u$ with the zeta function. The zeta function of a map summarizes information about the periodic points of the map. A generalized version of the zeta function is defined by assigning ``weights'' to periodic orbits. Recall the potential function $\phi = \phi_P$ for which $\mu_P$ is the equilibrium state, defined in Section~\ref{subsec:er_and_pressure}. The $\phi|_{B_{\G}}$-weighted zeta function for $\Sigma_{B_{\G}}$ is defined as
			\[
			\zeta(z, \phi|_{B_{\G}}) = \text{exp} \left\lbrace \sum\limits_{n=1}^{\infty} \frac{z^{n}}{n}  \left( \sum\limits_{  x \in \Sigma_{B_{\G}} \ :\ \sigma^n x = x  } e ^{\phi^n(x)}\right) \right\rbrace,
			\]
			where $\phi^n(x) =\sum\limits_{i=0}^{n-1} \phi \circ \sigma^{i}_{B_{\G}}(x)$ denotes the ergodic sum. It is a known result that $\zeta(z, \phi|_{B_{\G}}) = 1/\text{det}(\mathbf{I}-zB_\G\circ P_{r-1})$, see \cite{Parry_pollicott_ast}. %In~\cite{bowlan}, Bowen and Lanford prove $\zeta(z, \phi) = 1/\text{det}(\mathbf{I}-zP_{r-1})$ and similar result holds for weighted zeta functions as well, that is, 
			Hence, when $\G$ consists of a single word, the generating function $\F(z)$ is product of some polynomial and the weighted zeta function $\zeta(z, \phi|_{B_{\G}})$ (see~\eqref{eq:sumrat}). In general, when $|\G| = k > 1$, following the same argument as given in proof of Theorem~\ref{thm:erone}, we get $F(z) = z \tilde{\tau}_u(z) L(z)^{-1}p$, where $L(z)$ is a rank $k$ perturbation of the matrix $\tilde{\tau}_u(z)(\mathbf{I}-zP^T)$. Hence by employing the Woodbury formula for the inverse of a rank $k$ perturbed matrix, we expect to obtain the escape rate into $H_{\G}$ in terms of the smallest real root of $\text{det}(\mathbf{I}-zB_\G\circ P_{r-1})$. Hence, we believe that in the general case as well, $\F(z)$ can be expressed as the product of a rational function and the weighted zeta function $\zeta(z, \phi|_{B_{\G}})$. For instance, consider Example~\ref{exam:varyr} in which $\G$ consists of three words. Simple calculations show that, for $r=2,3$, $\F(z)$ is in fact some rational function times the zeta function $\zeta(z, \phi|_{B_{\G}})$. 
		\end{remark}
		
		The following result states that the escape rate into cylinders based at sufficiently long words can be made arbitrarily small.
		
		\begin{theorem}\label{thm:large_r}
			Given any $\epsilon>0$, there exists $R\ge 1$ such that for all words $u$ of length $r\ge R$, $\rho(C_u)<\epsilon$. 
		\end{theorem}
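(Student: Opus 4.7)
The plan is to apply Theorem~\ref{thm:erone}: $\exp(\rho(C_u))=z_0(u)$ is the smallest real root of
\[
f_u(z)=\tilde\tau_u(z)\,\text{det}(\mathbf{I}-zP)+z^{r-1}\delta_u\,\mathbf{e}_{u_r}^T\text{adj}(\mathbf{I}-zP)\mathbf{e}_{u_1}.
\]
For every prescribed $\epsilon>0$ I would pick $\delta=\delta(\epsilon)>0$ with $\ln(1+\delta)\le\epsilon$ and show that $f_u(1+\delta)<0$ for every $u$ of length $r\ge R$. Since $f_u(1)=c\,\mu(C_u)>0$ by Lemma~\ref{lemma:adj} and $f_u$ is continuous, the intermediate value theorem will then produce a root in $(1,1+\delta]$, whence $\rho(C_u)=\ln z_0(u)\le\ln(1+\delta)\le\epsilon$.

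First I would estimate $f_u(1+\delta)$ by splitting it into its two summands. Writing $\text{det}(\mathbf{I}-zP)=(1-z)q(z)$ with $q(1)=c>0$, and using that the constant term of $\tilde\tau_u(z)$ equals $1$ (so $\tilde\tau_u(1+\delta)\ge 1$), continuity of $q$ yields $\tilde\tau_u(1+\delta)\,\text{det}(\mathbf{I}-(1+\delta)P)\le-\delta c/2$ for all $\delta\in(0,\delta_0]$ with $\delta_0$ chosen small. The second summand is bounded above by $(1+\delta)^{r-1}\,\mu(C_u)\,G/p_{\min}$, where $G:=\max_{i,j}\sup_{z\in[1,1+\delta_0]}\bigl|(\text{adj}(\mathbf{I}-zP))_{ij}\bigr|$ and $p_{\min}=\min_i p_i>0$ depend only on $P$.

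The key ingredient is then uniform exponential decay of the maximum cylinder measure: there exist constants $K_0,\alpha>0$ such that $\max_{|u|=r}\mu(C_u)\le K_0\,e^{-\alpha r}$ for every $r$. This uses that $\mu_P$ is non-atomic (equivalent to $P$ not being a deterministic cycle, an implicit standing assumption), combined with a Perron--Frobenius / max-plus argument on the weighted graph $(\Sigma,\log P_{ij})$: the rate $\alpha$ equals the negative of the largest mean weight of a simple cycle in that graph. Given $\epsilon$, choose $\delta\in(0,\delta_0]$ with $\ln(1+\delta)\le\min(\epsilon,\alpha/2)$. Then
\[
(1+\delta)^{r-1}\mu(C_u)\le K_0\,e^{(r-1)\ln(1+\delta)-\alpha r}\le K_0\,e^{-\alpha r/2+\alpha/2}\xrightarrow[r\to\infty]{}0,
\]
so the second summand eventually falls below $\delta c/2$ and $f_u(1+\delta)<0$ for all $r\ge R$.

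The main obstacle is the uniform exponential bound $\max_{|u|=r}\mu(C_u)\le K_0\,e^{-\alpha r}$. Non-atomicity of $\mu_P$ alone only produces $\max_{|u|=r}\mu(C_u)\to 0$ (via a K\"onig's lemma argument on cylinders of measure $\ge\delta$), which is too weak; and the naive bound $\mu(C_u)\le(\max_{i,j}P_{ij})^{r-1}\max_i p_i$ is vacuous whenever some $P_{ij}=1$. Establishing the exponential rate requires the graph-theoretic observation that when $P$ is irreducible and not a permutation matrix, every cycle in the support graph of $P$ contains at least one edge with $P_{ij}<1$, making the maximum mean cycle weight strictly negative and yielding the desired $\alpha>0$.
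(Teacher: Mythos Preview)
Your approach coincides with the paper's: both evaluate $f_u$ at a point $z_1=1+\delta$ slightly above $1$, use $\tilde\tau_u(z_1)\ge 1$ and $\det(\mathbf{I}-z_1P)<0$ to bound the first summand from above by a fixed negative constant, show the second summand $z_1^{r-1}\delta_u\,(\text{adj}(\mathbf{I}-z_1P))_{u_ru_1}$ tends to $0$ uniformly in $u$ as $r\to\infty$, and conclude via the intermediate value theorem (the paper also notes $f_u(1)=c\,\mu(C_u)>0$, just as you do).

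The only difference is in how the second summand is killed. The paper simply bounds $\delta_u\le P_{\max}^{r-1}$ and uses $(z_1P_{\max})^{r-1}\to 0$; this tacitly needs $P_{\max}<1$, i.e.\ no deterministic transitions. Your route via the maximum mean cycle weight of the graph with edge weights $\log P_{ij}$ covers the remaining case $P_{\max}=1$: the observation that an irreducible stochastic $P$ which is not a permutation cannot have a cycle consisting entirely of edges with $P_{ij}=1$ (since a row containing a $1$ has all other entries $0$, so such a cycle would be a closed class) is exactly what is needed to get a strictly negative max mean cycle weight and hence the uniform exponential bound $\max_{|u|=r}\delta_u\le K_0e^{-\alpha r}$. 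So your argument is the same strategy, carried out with a more robust estimate at that one step.
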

		\begin{proof}
			By Lemma~\ref{lemma:adj}, each entry of $\text{adj}(\mathbf{I}-P)$ is positive. Let $z_1>1$ be such that each entry of $\text{adj}(\mathbf{I}-zP)$ remains positive in the entire interval $[1,z_1]$ and let $M_1=\max_{i,j\in\Sigma}\text{adj}(\mathbf{I}-z_1P)_{ij}$. Let $\epsilon>0$ be given. 
			
			Assume that $z_1<\min\{e^\epsilon,\theta^{-1},P_{max}\}$, where $\theta<1$ is a real positive eigenvalue of $P$ (if exists) and $P_{max}=\max_{i,j\in\Sigma}P_{ij}$. We now prove that there exists $R\ge 1$ such that for each word $u$ of length $r\ge R$, $f_u(z)$ has a root in the interval $[1,z_1]$ (where $f_u(z)$ is defined in~\eqref{eq:polynomial}). 
			
			With the above assumptions, the smallest real root of $f_u(z)$ lies in the interval $(1,z_1]$, and hence $\rho(C_u)\le \ln z_1<\epsilon$, by Theorem~\ref{thm:erone}. Since $z_1P_{max}<1$, choose $R\ge 1$ such that
			\[
			(z_1P_{max})^{R-1}<-\dfrac{\text{det}(\mathbf{I}-z_1P)}{M_1}.
			\]
			Let $r\ge R$ and $u$ be an allowed word of length $r$ beginning with $a$ and ending with $b$. Since $\tilde{\tau}_{u}(z_1)>1$, $\text{det}(\mathbf{I}-z_1P)<0$ and $\text{adj}(\mathbf{I}-z_1P)_{ba}>0$, we have
			\begin{eqnarray*}
				f_u(z_1) &=& \tilde{\tau}_{u}(z_1)\text{det}(\mathbf{I}-z_1P)+ z_1^{r-1}\delta_{u}\text{adj}(\mathbf{I}-z_1P)_{ba}\\
				&\le & \text{det}(\mathbf{I}-z_1P)+ (z_1P_{max})^{r-1}M_1<0.
			\end{eqnarray*}
			Observing that $f_u(1)>0$, the result follows.
			%Since each entry of $\text{adj}(\mathbf{I}-P)$ is positive by Lemma~\ref{lemma:adj}, let $z_0>1$ be such that each entry of $\text{adj}(\mathbf{I}-zP)$ remains positive in the entire interval $[1,z_0]$. If $P$ has a positive real eigenvalue $\theta$ other than 1, we can assume that $z_0<\theta^{-1}$. Let $\epsilon>0$ be given. We further assume that $z_0<e^\epsilon$. Moreover, assume $z_0<P_{max}^{-1}$, where $P_{max}=\max_{i,j\in\Sigma}P_{ij}$. We now prove that there exists $R\ge 1$ such that for each word $u$ of length $r\ge R$, $f_u(z)$ has a root in the interval $[1,z_0]$ (where $f_u(z)$ is defined in~\eqref{eq:polynomial}.) With the above assumptions, the smallest real root of $f_u(z)$ will lie in the interval $(1,z_0]$, and hence $\rho(C_u)\le \ln z_0<\epsilon$, by Theorem~\ref{thm:erone}. 
			
			%Let $u$ be an allowed word of length $r$ beginning with $a$ and ending with $b$. Since $\tilde{\tau}_{u}(z_0)>1$, $\text{det}(\mathbf{I}-z_0P)<0$ and $\text{adj}(\mathbf{I}-z_0P)_{ba}>0$, we have
			%\begin{eqnarray*}
			%f_u(z_0) &=& \tilde{\tau}_{u}(z_0)\text{det}(\mathbf{I}-z_0P)+ z_0^{r-1}\delta_{u}\text{adj}(\mathbf{I}-z_0P)_{ba}\\&\le & \text{det}(\mathbf{I}-z_0P)+ z_0^{r-1}P_{max}^{r-1}\text{adj}(\mathbf{I}-z_0P)_{ba}.
			%\end{eqnarray*}
			%Since $z_0P_{max}<1$, choose $R\ge 1$ such that
			%\[
			%(z_0P_{max})^{R}<-\dfrac{\text{det}(\mathbf{I}-z_0P)}{\text{adj}(\mathbf{I}-z_0P)_{ba}}.
			%\]
			%Then for all $r\ge R$, $f_u(z_0)<0$. Since $f_u(1)>0$, the result follows.
		\end{proof}
		
		The above result can be used to construct a large hole with an arbitrarily small escape rate, as in the following corollary.
		
		\begin{corollary}
			Given any $\epsilon>0$, there exists a Markov hole $H$ with $\mu(H)>1-\epsilon$ and $\rho(H)<\epsilon.$
		\end{corollary}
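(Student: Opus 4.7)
The plan is to bootstrap Theorem~\ref{thm:large_r}, which yields a single cylinder $C_w$ with $\rho(C_w)<\epsilon$, into a Markov hole of large measure whose escape rate is still $\rho(C_w)$. Given $\epsilon>0$, I first invoke Theorem~\ref{thm:large_r} to obtain $R\geq 1$ such that $\rho(C_w)<\epsilon$ whenever $|w|\geq R$, and fix any such allowed word $w$. Since $P$ is irreducible, $\mu_P$ is ergodic and $\mu(C_w)>0$, so the set of orbits that never meet $C_w$ has measure zero; by monotonicity I choose $N$ with $\mu(\mathcal{W}_N(C_w))<\epsilon$. Finally I set $r:=N+|w|$, define
\[
\G:=\{u\in\mathcal{L}_r:w \text{ is a subword of } u\},
\]
and take the Markov hole $H:=H_\G=\bigcup_{u\in\G}C_u$.

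The heart of the proof is the identity $\mathcal{W}_m(H)=\mathcal{W}_{m+N}(C_w)$ for every $m\geq 0$. Unfolding the definition, $x\in\mathcal{W}_m(H)$ says that for each $j\in\{0,1,\ldots,m\}$ the length-$r$ window $x_{j+1}\ldots x_{j+r}$ avoids $w$ as a subword. Since $|w|\leq r$, this is equivalent to requiring that $w$ does not start at any position in
\[
\bigcup_{j=0}^m\{j+1,\ldots,j+r-|w|+1\}=\{1,2,\ldots,m+N+1\},
\]
i.e., $x\in\mathcal{W}_{m+N}(C_w)$.

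Given this identity, both conclusions follow at once. Evaluating at $m=0$ gives $\Sigma_A\setminus H=\mathcal{W}_N(C_w)$, so $\mu(H)=1-\mu(\mathcal{W}_N(C_w))>1-\epsilon$ by the choice of $N$. Dividing by $m$ and letting $m\to\infty$, the offset $N$ does not alter the exponential decay rate, so $\rho(H)=\rho(C_w)<\epsilon$. The only delicate step is the combinatorial identity above, which boils down to the observation that any occurrence of $w$ in a sequence sits inside at least one length-$r$ window; hence forbidding all such windows to contain $w$ is exactly the same as forbidding $w$ itself to start in the corresponding positions.
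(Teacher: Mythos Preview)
Your proof is correct and is essentially the same as the paper's: your hole $H_\G$ coincides set-theoretically with the paper's $H=\bigcup_{i=0}^{N}\sigma^{-i}C_w$, and your identity $\mathcal{W}_m(H)=\mathcal{W}_{m+N}(C_w)$ is precisely what justifies the paper's terse claim that $\rho(H)=\rho(C_w)$. The only difference is presentation: you spell out $\G$ explicitly and prove the survival-set identity in detail, whereas the paper invokes ergodicity and the equality of escape rates without elaboration.
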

		\begin{proof}
			By Theorem~\ref{thm:large_r}, let $u$ be a word with $\rho(C_u)<\epsilon$. Since $\Sigma_A$ is ergodic with respect to $\mu_P$, there exists $n\ge 1$ so that $\mu_P\left(\bigcup_{i=0}^n \sigma^{-i}C_u\right)>1-\epsilon$. Take $H=\bigcup_{i=0}^n \sigma^{-i}C_u$. The result follows since $\rho(C_u)=\rho(H)$.
		\end{proof}
		\subsection{Special cases}
		In~\cite{BCL}, Bonanno~\textit{et al.} study the issue of computing the escape rate into a single cylinder in the following two cases: (i) when $\mu_P$ is a product measure, and (ii) when $\Sigma$ has only two symbols and $\mu_P$ is any Markov measure. These cases follow from our results. Let us consider these cases one by one. 
		
		\begin{enumerate}
			\item[(i)] Let $\mu_P$ be a product measure. Then $P=\mathbf{1}\textbf{p}^T$ for some probability vector $\textbf{p}$. Using Lemma~\ref{lemma:det_inv_sum}, we can easily compute $\text{det}(\mathbf{I}-zP)=1-z$ and $\text{adj}(\mathbf{I}-zP)=(1-z)I+zP$. Let $u$ be an allowed word of length $r$. The function $f_u(z)$ defined as in~\eqref{eq:polynomial} takes the form $f_u(z)=\tilde{\tau}_{u}(z)(1-z)+z^{r}\mu(u)$. In this setting, Theorem~\ref{thm:esc_rate_as_smallest_pole} is precisely the statement of~\cite[Proposition 2.4]{BCL}. 
			\item[(ii)] Consider the subshift $\Sigma_A$ with only two symbols with the Markov measure $\mu_P$ where $P=(P_{ij})_{1\le i,j\le 2}$. It is easy to see that 
			%$\text{det}(\mathbf{I}-zP)=(z-1)((P_{11}+P_{22}-1)z-1)$, and
			%\[
			%\text{adj}(\mathbf{I}-zP)=\begin{pmatrix}
				%1-zP_{22} & zP_{12} \\ zP_{21} & 1-zP_{11}
				%\end{pmatrix}.
				%\]
				%Denoting $\tilde{\delta}_u=\delta_u P_{u_ru_1}$ and $T=\text{trace}(P)=P_{11}+P_{22}$, we get
				\[
				f_u(z)=(1-z)(1-(T-1) z)\tilde{\tau}_u(z)+\tilde{\delta}_u z^r +\chi_{u_1u_r}\delta_u(1-Tz)z^{r-1},
				\] 
				where $\tilde{\delta}_u=\delta_u P_{u_ru_1}$ and $T=\text{trace}(P)=P_{11}+P_{22}$. In this setting, Theorem~\ref{thm:esc_rate_as_smallest_pole} is precisely the statement of~\cite[Proposition 4.2]{BCL}.
			\end{enumerate}

			\section{Comparison of the two approaches to compute the escape rate}\label{sec:advantages}
			In the preceding two sections, we have presented two different methods to compute the escape rate into a Markov hole $H_{\G}$ in SFT $\Sigma_A$. The first one as given in Theorem~\ref{thm:erformula}, is a direct formula given by $\rho(H_\G) = -\ln \lambda(B_\G\circ P_{r-1})$, which depends on the higher block presentation $P_{r-1}$ of the stochastic matrix $P$ (recall: $r$ is the length of the longest word in $\G$) and the adjacency matrix $B_{\G}$ of the open system.
			The second method, based on Theorem~\ref{thm:esc_rate_as_smallest_pole}, states that the escape rate is the logarithm of the radius of convergence of certain rational functions. Both methods are useful in their own right; one is more applicable than the other depending on the situation. In this section, we highlight some of the advantages of using either of the methods over the other. 
			
			In the first formula, the adjacency matrix $B_{\G}$ and the higher-order representation $P_{r-1}$ of the stochastic matrix $P$, both are indexed by $\mathcal{L}_{r-1}$ (which is the set of allowed words of length $r-1$ in $\Sigma_A$). The sizes of these matrices grow exponentially in $r$; hence, the method is computationally expensive for large values of $r$. However, for each fixed $r$, the formula does not depend on the number of words in the collection $\G$ determining the hole. On the other hand, in the second method, the size of the matrix $\mathbf{C}(z)$ (see~\eqref{eq:wcorrmat}) grows with the number of elements in $\G$, where $|\G|^2$ many weighted correlation polynomials need to be computed. Hence, for small $r$ and large $|\G|$, the first method may be more effective. We must point out that the words in $\G$ may have little overlapping, and thus, for large $|\G|$, the matrix $\mathbf{C}(z)$ may still be easier to compute. Thus, the second method may be computationally less expensive than the first method in certain situations.
			
			In the second approach, to obtain the rational function (the logarithm of its pole is the escape rate), one is required to derive the system of linear equations consisting of as many equations in total as $|\G\setminus \Sigma| + |\Sigma \setminus \G| $. Hence, the larger the collection $\G \setminus \Sigma$, the more cumbersome it is to form the required system of equations. However, if the collection $\G$ is small enough, then this approach can prove to be more useful.
			
			In Example~\ref{exam:compnew} discussed earlier, computation of escape rate using the first method is much easier than the second. The following example demonstrates a situation where the method of generating functions is better suited.

			\begin{exam}\label{exam:varyr}
				Consider the full shift on two symbols with a product measure $\mu_P$ and a Markov hole $H_\G$, with $\G$ consisting of three words each of length $r$: $u_1=1 1\dots 12$, $u_2=2 2\dots 21$, $u_3=122\dots 221$. We wish to compute the escape rate into the hole $H_\G$. In order to apply Theorem~\ref{thm:erformula}, we need to compute the matrix $P_{r-1}$ whose size is $2^{r-1}$, which grows exponentially with $r$. However, no matter how large $r$ is, applying Theorem~\ref{thm:rec_relations} is much simpler. We need to solve a system of 5 linear equations to be able to arrive at a rational function form of $F_1(z)+F_2(z)$, whose smallest pole is $\text{exp}(\rho(H_\G))$, by Theorem~\ref{thm:esc_rate_as_smallest_pole}. Let us now work out the system of equations in the matrix form~\eqref{eq:matrixform}, as described in Section~\ref{sec:mat_form}. We have
				\[
				M(z)=\begin{pmatrix}
					1-zP_{11} & -zP_{21} & 0 & 1 & 1\\
					-zP_{12} & 1-zP_{22} & 1 & 0 & 0 \\
					z^{r-1}\delta_{u_1} & 0 & -1 & 0 & 1\\
					0 &  z^{r-1}\delta_{u_2} & 0 & -1 & 0\\
					z^{r-1}\delta_{u_3} & 0 & -z^{r-2}\delta_{u_3}/P_{12} & 0 & -1
				\end{pmatrix}.
				\]
				Since $\mu_P$ is a product measure, $P=\mathbf{1}\mathbf{p}$, where $\mathbf{p}^T=(p_1,p_2)$ is a probability vector. For simplicity of analysis, let $p_1=p_2=1/2$ (uniform measure). Then
				\[
				\F(z) = F_1(z)+F_2(z)=\dfrac{-4^rz^4+2^rz^{3+r}-4z^{1+2r}}{4^rz^3-4^rz^4+8z^{2r}+2^{1+r}z^{2+r}+2^rz^{3+r}-8z^{1+2r}}.
				\]
				Table~\ref{table:valuesr} shows the smallest pole of the above rational function (which is nothing but $\text{exp}(\rho(H_\G))$), as $r$ varies between 2 to 8.
			\end{exam}
			\begin{table}[h!]
				\centering
				\begin{tabular}{|c|c|c|c|c|c|c|c|}
					\hline
					$r$ & 2 & 3 & 4 & 5 & 6 & 7 & 8 \\
					\hline
					$\text{exp}(\rho(H_\G))$& 2&2&2&1.21&1.07&1.03&1.01 \\
					\hline
				\end{tabular}
				\caption{Change in escape rate with respect to the length of the words}
				\label{table:valuesr}
			\end{table}
			In particular, when $\G$ consists of a single word $u$ of any length $r$, then the escape rate into the cylinder is given by logarithm of the smallest real root of the polynomial $f_u$ given in~\eqref{eq:polynomial}. Note that this polynomial depends on the weighted correlation polynomial of $u$, the matrix $P$, the measure of the cylinder $C_u$, and the first and last symbol of $u$. By exploiting this fact, we can compare escape rates into two words by fixing some of these parameters and varying the others. For instance, below, we prove two more results about the relationship between escape rates into two cylinders corresponding to words satisfying certain relations. It is not so direct to derive them from the first method, except that when the hole corresponds to a single word, the two methods are related through Theorem~\ref{thm:relate}. Theorem~\ref{thm:compare-prime} below states that in certain classes of words satisfying specific parameters, the prime word in that class achieves the maximum escape rate. 
			
			\begin{theorem}\label{thm:compare-prime}
				Let $u$ be an allowed prime word of length $r\ge 2$ in $\Sigma_A$. If $w\in \mathcal{L}_r$ satisfies $i(w)=i(u)$, $t(w)=t(u)$, and $\delta_w=\delta_u$, then $\rho(C_w)\le \rho(C_u)$. Moreover, if 1 is the only positive real eigenvalue of $P$, equality holds if and only if $w$ is a prime word.
			\end{theorem}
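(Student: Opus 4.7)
The plan is to invoke Theorem~\ref{thm:erone} and directly compare the polynomials $f_u$ and $f_w$. Setting $z_0 := \exp(\rho(C_u))$, the smallest real root of $f_u$, it suffices to exhibit a real root of $f_w$ in the interval $(1, z_0]$, and, for the equality clause, to refine this to a root strictly less than $z_0$ whenever $w$ fails to be prime.

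The first step is to observe the algebraic cancellation produced by the hypotheses $i(w) = i(u)$, $t(w) = t(u)$, and $\delta_w = \delta_u$: the $z^{r-1}\delta\,\mathbf{e}_{t}^{T}\mathrm{adj}(\mathbf{I}-zP)\mathbf{e}_{i}$ term in~\eqref{eq:polynomial} is identical for $u$ and $w$, so
\[
f_w(z) - f_u(z) = \bigl(\tilde{\tau}_w(z) - \tilde{\tau}_u(z)\bigr)\det(\mathbf{I} - zP).
\]
Next, since $u$ is prime with $r \ge 2$, one checks that $u_1 \ne u_r$ (otherwise $r-1 \in (u,u)$, contradicting primality), so also $w_1 \ne w_r$, hence $c_{r-1, u, u} = c_{r-1, w, w} = 0$. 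Consequently $\tilde{\tau}_u(z) = \tau_u(z) = 1$ and
\[
\tilde{\tau}_w(z) - 1 = \sum_{s=1}^{r-2} c_{s,w,w}\, \delta_{s,w,w}\, z^s,
\]
a polynomial with non-negative coefficients which vanishes identically on $[0,\infty)$ exactly when $w$ is prime.

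The second ingredient is to pin down the sign of $\det(\mathbf{I} - zP)$ on $(1, z_0]$. Factoring $\det(\mathbf{I} - zP) = (1-z)\prod_{\lambda \neq 1}(1-z\lambda)$ and noting that the remaining product is strictly positive at $z=1$ (real eigenvalues $\lambda \ne 1$ satisfy $\lambda < 1$, and complex conjugate pairs contribute $|1-z\lambda|^2$), one sees that $\det(\mathbf{I} - zP) < 0$ immediately past $z=1$ and remains non-positive until $z$ reaches the reciprocal of the next positive real eigenvalue. Theorem~\ref{thm:location} guarantees $z_0 \le \theta^{-1}$, so $\det(\mathbf{I} - zP) \le 0$ throughout $(1, z_0]$. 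Combining the sign of $\tilde{\tau}_w - 1$ and of $\det(\mathbf{I} - zP)$ gives $f_w(z_0) \le 0$; since $f_w(1) = f_u(1) = \mu(u) > 0$, the intermediate value theorem supplies a real root of $f_w$ in $(1, z_0]$. By Theorem~\ref{thm:erone}, $\exp(\rho(C_w)) \le z_0$, proving $\rho(C_w) \le \rho(C_u)$.

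For the equality clause, the ``if'' direction needs no extra hypothesis: if $w$ is prime then $\tilde{\tau}_w \equiv 1$, so $f_w = f_u$ and the escape rates coincide. For the converse, the assumption that $1$ is the only positive real eigenvalue of $P$ upgrades the sign to $\det(\mathbf{I} - zP) < 0$ strictly on $(1, \infty)$. If $w$ is not prime, then by the display above $\tilde{\tau}_w(z_0) > 1$ (some $c_{s,w,w}=1$ for $s \in \{1,\dots,r-2\}$ and $\delta_{s,w,w}>0$), forcing $f_w(z_0) < 0$ and hence a root of $f_w$ strictly inside $(1, z_0)$, yielding $\rho(C_w) < \rho(C_u)$. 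The main bookkeeping subtlety is the exact definition of $\tilde{\tau}_w$: the collapse of $\tilde{\tau}_w - 1$ to a sum over $1 \le s \le r-2$ rather than $1 \le s \le r-1$ is what makes its vanishing equivalent to primality of $w$, and this is inherited from the forced inequality $i(u) \ne t(u)$ for a prime word $u$ of length at least $2$.
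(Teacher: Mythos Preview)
Your proof is correct and follows essentially the same route as the paper's: both derive the identity $f_w(z)-f_u(z)=(\tilde{\tau}_w(z)-1)\det(\mathbf{I}-zP)$, use the sign of $\det(\mathbf{I}-zP)$ on $(1,z_0]$ together with Theorem~\ref{thm:location}, and then treat the equality clause via strict negativity of the determinant when $1$ is the only positive real eigenvalue. Your version adds a few details the paper leaves implicit---the observation $u_1\ne u_r$ forcing $c_{r-1,w,w}=0$, the explicit factorization of $\det(\mathbf{I}-zP)$, and the IVT step with $f_w(1)>0$---but the argument is the same; the only cosmetic point is that when $P$ has no second positive real eigenvalue you cannot literally invoke Theorem~\ref{thm:location}, though in that case $\det(\mathbf{I}-zP)<0$ on all of $(1,\infty)$ and your conclusion holds directly.
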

			
			\begin{proof}
				By Theorem~\ref{thm:erone}, observe that  \[
				f_w(z)=(\tilde{\tau}_{w,w}(z)-1)\text{det}(\mathbf{I}-zP)+f_u(z).
				\]
				If there exists a second largest positive real eigenvalue $\theta$ of $P$, then for all $1<z<\theta^{-1}$, we have $\text{det}(\mathbf{I}-zP)<0$. Moreover since $\tilde{\tau}_{w,w}(z)\ge 1$, for real $z>1$, we have $f_w(z)\le f_u(z)$, for all $z\in (1,\theta^{-1})$ and $f_w(z)=f_u(z)$ at $z=1,\theta^{-1}$. By Theorem~\ref{thm:location}, $\rho(C_w)\le \rho(C_u)$. 
				
				Further, if $P$ has no positive real eigenvalue other than 1, then $\text{det}(\mathbf{I}-zP)<0$, for all $z>1$. Hence $f_w(z)\le f_u(z)$, for all $z>1$. Clearly $\rho(C_w)\le \rho(C_u)$.
				
				Now, suppose $P$ has 1 as the only positive real eigenvalue. Observe that if $w$ is not a prime word, then $\tilde{\tau}_{w,w}(z)> 1$, for all $z>1$. Hence $\rho(C_w)= \rho(C_u)$ if and only if $w$ is a prime word.
			\end{proof}
			
			Similarly, the following result states that under certain conditions, the words having the least correlation achieve the maximal escape rate. We skip its proof since it follows along the lines of that for Theorem~\ref{thm:compare-prime}.
			\begin{theorem}
				Let $u$ be an allowed word of length $r\ge 2$ in $\Sigma_A$ with $\tau_{u,u}(z)=1+z^{r-1}\delta_u$. If $w\in \mathcal{L}_r$ satisfies $i(w)=i(u)=t(u)=t(w)$, and $\delta_w=\delta_u$, then $\rho(C_w)\le \rho(C_u)$.
			\end{theorem}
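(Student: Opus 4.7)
The plan is to mimic the proof of Theorem~\ref{thm:compare-prime} by comparing the polynomials $f_u$ and $f_w$ from Theorem~\ref{thm:erone} on the appropriate interval. First, I would unpack the hypothesis $\tau_{u,u}(z) = 1 + z^{r-1}\delta_u$. Since $\tau_{u,u}(z) = \sum_{s=0}^{r-1} c_{s,u,u}\delta_{s,u,u} z^s$ with the trivial overlap always contributing $c_{0,u,u}\delta_{0,u,u} = 1$, this hypothesis forces $c_{s,u,u} = 0$ for all $1 \le s \le r-2$ while $c_{r-1,u,u} = 1$; in particular $u_1 = u_r$, consistent with the condition $i(u) = t(u)$. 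Therefore $\tilde{\tau}_u(z) = \tau_u(z) - z^{r-1}c_{r-1,u,u}\delta_u = 1$.

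Next, for any allowed word $w \in \mathcal{L}_r$ satisfying the stated hypotheses, the equality $i(w) = t(w)$ gives $c_{r-1,w,w} = 1$, so
\[
\tilde{\tau}_w(z) = \tau_w(z) - z^{r-1}\delta_w = 1 + \sum_{s=1}^{r-2} c_{s,w,w}\delta_{s,w,w}\, z^s,
\]
which is at least $1$ for every real $z \ge 0$. Applying Theorem~\ref{thm:erone} and using the hypotheses $i(w) = i(u)$, $t(w) = t(u)$, and $\delta_w = \delta_u$, the term $z^{r-1}\delta_u\,\mathbf{e}_{t(u)}^T\text{adj}(\mathbf{I} - zP)\mathbf{e}_{i(u)}$ appears identically in $f_u$ and $f_w$, so
\[
f_w(z) - f_u(z) = \bigl(\tilde{\tau}_w(z) - \tilde{\tau}_u(z)\bigr)\det(\mathbf{I} - zP) = \bigl(\tilde{\tau}_w(z) - 1\bigr)\det(\mathbf{I} - zP).
\]

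To conclude, I would analyze the sign of $\det(\mathbf{I} - zP)$ on the interval where the smallest real root of $f_u$ lies, exactly as done in the proof of Theorem~\ref{thm:compare-prime}. Writing $\det(\mathbf{I} - zP) = \prod_i (1 - z\lambda_i)$, the factor $(1-z)$ is negative for $z > 1$, while on $(1,\theta^{-1})$ (or on $(1,\infty)$ if $1$ is the only positive real eigenvalue of $P$) every other real factor $(1 - z\lambda_i)$ with $\lambda_i > 0$ stays positive, and complex conjugate pairs contribute positive values. Hence $\det(\mathbf{I} - zP) < 0$ on this interval, and combined with $\tilde{\tau}_w(z) - 1 \ge 0$ it gives $f_w(z) \le f_u(z)$ throughout the interval, with equality at the endpoints $1$ and (when applicable) $\theta^{-1}$. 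Since $f_u(1) = \mu(u) > 0$ and $f_w(1) = \mu(w) > 0$, continuity forces $f_w$ to vanish no later than $f_u$ in $(1,\theta^{-1}]$, so by Theorem~\ref{thm:erone} the smallest real root of $f_w$ in this interval is at most that of $f_u$, that is, $\exp(\rho(C_w)) \le \exp(\rho(C_u))$, and the theorem follows. The only delicate point, and hence the main step requiring care, is the sign analysis of $\det(\mathbf{I} - zP)$ and the verification (via Theorem~\ref{thm:location}) that the relevant roots in fact lie in $(1, \theta^{-1}]$; the rest is a direct parallel to the proof of Theorem~\ref{thm:compare-prime}.
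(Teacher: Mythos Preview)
Your proposal is correct and is precisely the argument the paper has in mind: the authors explicitly skip the proof, stating it ``follows along the lines of that for Theorem~\ref{thm:compare-prime}'', and your write-up carries out that parallel faithfully, including the key reduction $\tilde{\tau}_u(z)=1$ from the hypothesis $\tau_{u,u}(z)=1+z^{r-1}\delta_u$ and the observation $\tilde{\tau}_w(z)\ge 1$ from $i(w)=t(w)$. Your added justification of the sign of $\det(\mathbf{I}-zP)$ on $(1,\theta^{-1})$ via the factorization $\prod_i(1-z\lambda_i)$ is a welcome detail that the paper leaves implicit.
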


			\section{Proof of Theorem~\ref{thm:relate}}\label{sec:relate}
			
			In this section, we present a proof of Theorem~\ref{thm:relate}. The proof uses a series of lemmas, which we state and prove first.
			
			\begin{lemma}\label{lemma:det-rel}
				For each $r\ge 1$, \[
				\text{det}(\mathbf{I}-zP_{r})=\text{det}(\mathbf{I}-zP).
				\]
			\end{lemma}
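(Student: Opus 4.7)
The plan is to proceed by induction on $r \geq 1$. The base case $r=1$ is immediate since $P_1 = P$ by definition. For the inductive step, the strategy is to factor $P_r$ as $UV$ and simultaneously realize $P_{r-1}$ as $VU$ for suitable rectangular matrices $U, V$; Sylvester's determinant identity
\[
\text{det}(\mathbf{I}_{|\mathcal{L}_r|} - zUV) = \text{det}(\mathbf{I}_{|\mathcal{L}_{r-1}|} - zVU)
\]
then immediately yields $\text{det}(\mathbf{I} - zP_r) = \text{det}(\mathbf{I} - zP_{r-1})$, and the inductive hypothesis closes the argument.

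For $r \geq 2$, I would define $U$ of size $|\mathcal{L}_r| \times |\mathcal{L}_{r-1}|$ and $V$ of size $|\mathcal{L}_{r-1}| \times |\mathcal{L}_r|$ by
\[
U_{X, Z} := \chi_{x_2 \cdots x_r,\, Z}, \qquad V_{Z, Y} := P_{z_{r-1}, y_r} \, \chi_{y_1 \cdots y_{r-1},\, Z},
\]
for $X = x_1\cdots x_r,\ Y = y_1\cdots y_r \in \mathcal{L}_r$ and $Z = z_1\cdots z_{r-1} \in \mathcal{L}_{r-1}$. Informally, $U$ is the ``suffix'' operator sending a word of length $r$ to its length-$(r-1)$ suffix, while $V$ encodes both the requirement that $Z$ be the length-$(r-1)$ prefix of $Y$ and the Markov transition weight $P_{z_{r-1}, y_r}$ needed to extend $Z$ by the symbol $y_r$.

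A direct calculation then yields
\[
(UV)_{X,Y} = P_{x_r, y_r}\, \chi_{x_2 \cdots x_r,\, y_1 \cdots y_{r-1}} = (P_r)_{XY},
\]
\[
(VU)_{Z,W} = P_{z_{r-1}, w_{r-1}}\, \chi_{z_2 \cdots z_{r-1},\, w_1 \cdots w_{r-2}} = (P_{r-1})_{ZW}.
\]
The identification $(UV)_{X,Y} = (P_r)_{XY}$ uses that, when $X, Y \in \mathcal{L}_r$ overlap progressively, the concatenation $X * Y$ is automatically allowed (since $y_{r-1} = x_r$ forces $A_{x_r, y_r} = A_{y_{r-1}, y_r} = 1$). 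The identification $(VU)_{Z,W} = (P_{r-1})_{ZW}$ is the more delicate part: the sum over $X \in \mathcal{L}_r$ a priori imposes the extra allowedness condition $A_{z_{r-1}, w_{r-1}} = 1$ on the unique candidate $X = z_1 w_1 \cdots w_{r-1}$, but this additional constraint is vacuous because the compatibility of $P$ with $A$ forces $P_{z_{r-1}, w_{r-1}} = 0$ whenever $A_{z_{r-1}, w_{r-1}} = 0$.

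The main bookkeeping step is exactly the verification just outlined -- confirming that the overlap condition, the allowedness of $X$, and the vanishing of the relevant entries of $P$ combine to make $UV$ and $VU$ equal to $P_r$ and $P_{r-1}$ respectively on the nose. Once this is in hand, Sylvester's identity, obtained by evaluating
\[
\text{det}\begin{pmatrix} \mathbf{I}_{|\mathcal{L}_r|} & U \\ zV & \mathbf{I}_{|\mathcal{L}_{r-1}|} \end{pmatrix}
\]
via Schur complements with respect to the two diagonal blocks, delivers the desired equality and the induction concludes.
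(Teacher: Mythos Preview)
Your proof is correct and takes a genuinely different route from the paper's. The paper argues directly (not inductively) by showing that $\text{tr}(P_r^{\ell}) = \text{tr}(P^{\ell})$ for every $\ell \geq 1$ via a bare-hands count of periodic paths, and then invokes the expression of the characteristic polynomial in terms of traces of exterior powers (equivalently, Newton's identities) to conclude $\det(z\mathbf{I}-P_r)=z^{|\mathcal{L}_r|-N}\det(z\mathbf{I}-P)$. Your argument instead exhibits an explicit factorization $P_r = UV$, $P_{r-1} = VU$ through the suffix/prefix maps and appeals to Sylvester's determinant identity, closing by induction. The paper's approach yields the trace identities as a byproduct, which could be useful elsewhere; your approach is arguably more transparent, avoids the exterior-algebra machinery, and handles the allowedness constraints $X \in \mathcal{L}_r$ cleanly rather than reducing to the full-shift case as the paper does (``assume for convenience that $\mathcal{L}_r = \Sigma^r$'').
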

			
			\begin{proof}
				The characteristic polynomial of $P$ can be written as 
				\[
				\text{det}(z\mathbf{I}-P) = \sum_{k=0}^N z^{n-k}(-1)^k\text{tr}\left(\Lambda^k P\right),
				\]
				where $\Lambda^k P$ is the $k^{th}$ exterior power of $P$. Fix $r\ge 1$ and let $Q=P_{r}$ (for simplicity of notation.) Also, without loss of generality, assume for convenience that $\mathcal{L}_{r}=\Sigma^{r}$. Then
				\[
				\text{tr}(Q^\ell) = \sum_{X\in\Sigma^{r}}(Q^\ell)_{XX}= 
				\sum_{X\in\Sigma^{r}}\sum_{Y_1,\dots,Y_{\ell-1}}Q_{XY_1}Q_{Y_1Y_2}\dots Q_{Y_{\ell-1}X}.
				\]
				Let $X=x_1\dots x_r$ and $Y_i=y_{i,1}\dots Y_{i,r}$ for $1\le i\le \ell-1$.
				
				The product $Q_{XY_1}Q_{Y_1Y_2}\dots Q_{Y_{\ell-1}X}$ is nonzero only if $X*Y_1, Y_1*Y_2,\dots, Y_{\ell-1}*X$ are defined, in which case, it equals %$P_{t_2(Y_1)}\dots P_{t_2(Y_{\ell-1})}P_{t_2(X)}$.
				%Then we have, $x_r=y_{1,r-1}, y_{i,k}=y_{i+1,k-1}$ for all $1\le k\le r-1$ and $x_{r-1}=y_{\ell-1,r}$. Hence,
				\[
				P_{t_2(Y_1)}\dots P_{t_2(Y_{\ell-1})}P_{t_2(X)}=P_{x_ry_{1,r}}\dots P_{y_{\ell-2,r}y_{\ell-1,r}}P_{y_{\ell-1,r}x_r}.
				\]
				%There are two cases:\begin{itemize}
					%		\item Case 1: $\ell\le r$. If $X=x_1\dots x_{r}$, then $Y_1=x_2\dots x_ry_1, Y_2=x_3\dots x_ry_1y_2, \dots$, and $Y_{\ell-1}=x_{\ell}\dots x_ry_1\dots y_{\ell-1}$. Hence $X=x_{\ell+1}\dots x_r y_1\dots y_\ell$. Hence 
					%		\[\text{tr}(Q^\ell) = \sum_{x_1,\dots,x_{\ell}\in\Sigma}P_{x_\ell x_1}P_{x_{1}x_2}\dots P_{x_{\ell-1}x_\ell}=\sum_{x_\ell\in\Sigma} (P^\ell)_{x_\ell x_\ell}=\text{tr}(P^\ell).
					%		\]
					%		\item Case 2: $\ell>r$. If $X=x_1\dots x_r$, then $Y_1=x_2\dots x_ry_1, Y_2=x_3\dots x_ry_1y_2, \dots$, $Y_{r-1}=x_ry_1\dots y_{n-1}$, $Y_{r}=y_1\dots y_{r}, \dots$, and $Y_{\ell}=y_{\ell-r+1}x_1\dots x_{r-1}$. Hence
					%		\begin{eqnarray*}
						%			\text{tr}(Q^\ell)&=& \sum_{x_1,\dots,x_r,y_1,\dots,y_{\ell-r+1}\in\Sigma} P_{x_ry_1}P_{y_1 y_2}\dots P_{y_{\ell-r}y_{\ell-r+1}}P_{x_1x_2}\dots P_{x_{r-1}x_{r}}\\
						%			&=& \sum_{x_r\in\Sigma} (P^\ell)_{x_r x_r}\\
						%			&=& \text{tr}(P^\ell).
						%		\end{eqnarray*} 
					%	\end{itemize}
				Therefore we have that $\text{tr}(Q^\ell)=\text{tr}(P^\ell)$, for all $\ell\ge 1$. By the definition of exterior power, we have $\text{tr}\left(\Lambda^\ell Q\right)=\text{tr}\left(\Lambda^\ell P\right)$. Hence
				\begin{eqnarray*} 
					\text{det}(z\mathbf{I}-Q)&=&\sum_{k=0}^{N^r} z^{N^r-k}(-1)^k\text{tr}\left(\Lambda^k Q\right) = \sum_{k=0}^{N^n} z^{N^r-k}(-1)^k\text{tr}\left(\Lambda^k P\right) \\
					&=& \sum_{k=0}^{N} z^{N^r-k}(-1)^k\text{tr}\left(\Lambda^k P\right)= z^{N^r-N}\text{det}(z\mathbf{I}-P).
				\end{eqnarray*} 
				We have used the fact that $\Lambda^k(P)=0$ for $k>N$. Hence we have
				\[
				\text{det}(z\mathbf{I}-P_{r})=z^{N^{r}-N}\text{det}(z\mathbf{I}-P).
				\]
				Therefore, \[
				\text{det}(\mathbf{I}-zP_r)=z^{N^r}\text{det}(z^{-1}I-P_r)=z^N\text{det}(z^{-1}I-P)=\text{det}(\mathbf{I}-zP).
				\]
			\end{proof}
			\noindent We now add the following two lemmas proofs of which are skipped. 
			\begin{lemma}\label{lemma:det-adj}
				If $M$ is a matrix of size $N$ with
				\[
				\text{det}(\mathbf{I}-zM)= \sum_{i=0}^N \alpha_{i}z^{N-i},
				\]
				then
				\[
				\text{adj}(\mathbf{I}-zM)=\sum_{0\le j+k\le N-1} \alpha_{j+k+1}z^{N-1-j}M^k.
				\]
			\end{lemma}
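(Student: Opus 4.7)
The plan is to combine two classical identities. The first is the defining relation $(\mathbf{I} - zM)\,\text{adj}(\mathbf{I} - zM) = \text{det}(\mathbf{I} - zM)\, \mathbf{I}$, and the second is the Neumann expansion $(\mathbf{I} - zM)^{-1} = \sum_{k \ge 0} z^k M^k$, which converges whenever $|z|$ is smaller than $1/\lambda(M)$. Dividing the first identity by $\text{det}(\mathbf{I}-zM)$ and substituting the second would give, as a matrix-valued power series in $z$,
\[
\text{adj}(\mathbf{I} - zM) \;=\; \Bigl(\sum_{i=0}^N \alpha_i\, z^{N-i}\Bigr)\Bigl(\sum_{k \ge 0} z^k M^k\Bigr).
\]
I would then multiply out and collect powers of $z$: the coefficient of $z^n$ is $\sum_{i,\,k} \alpha_i M^k$, where the sum is over $0 \le i \le N$ and $k \ge 0$ with $N-i+k = n$.

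Next I would treat the low-degree range $0 \le n \le N-1$. Here the constraint forces $i \in \{N-n, \dots, N\}$, and setting $k = n - N + i$ re-indexes the sum to $\sum_{k=0}^{n} \alpha_{N-n+k}\, M^k$. Writing $j = N - 1 - n$ (so that $n = N - 1 - j$), this becomes the coefficient of $z^{N-1-j}$ in $\sum_{0 \le j+k \le N-1} \alpha_{j+k+1}\, z^{N-1-j}\, M^k$, which is exactly the formula claimed in the lemma. Thus the low-degree coefficients match by pure bookkeeping.

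The essential step, and really the only non-trivial one, is to show that the coefficients of $z^n$ for $n \ge N$ vanish, so that the formal series truncates to a polynomial of degree at most $N-1$ matching the adjugate. For such $n$ the index $i$ ranges over all of $\{0, \dots, N\}$, and the coefficient of $z^n$ equals $M^{n-N} \cdot \sum_{i=0}^N \alpha_i M^i$. Since
\[
\text{det}(x\mathbf{I} - M) \;=\; x^N\, \text{det}(\mathbf{I} - x^{-1}M) \;=\; \sum_{i=0}^N \alpha_i\, x^i
\]
is the characteristic polynomial of $M$, the Cayley--Hamilton theorem yields $\sum_{i=0}^N \alpha_i M^i = 0$, so every coefficient of degree $n \ge N$ vanishes.

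The hard part, such as it is, is simply recognizing that the two identities at the start identify $\text{adj}(\mathbf{I}-zM)$ with the truncation of a formal series, and that Cayley--Hamilton is exactly what forces the truncation to land at degree $N-1$; everything else is the index substitution $j = N-1-n$ and summation bookkeeping.
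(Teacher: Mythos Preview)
Your argument is correct. The paper explicitly skips the proof of this lemma, so there is no original argument to compare against; your derivation via the Neumann series and Cayley--Hamilton is a clean and standard way to establish the identity, and the index bookkeeping is accurate.
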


			\begin{lemma}\label{lemma:powerP}
				For $u=u_1\dots u_r$, $X=u_1\dots u_{r-1}$, and $Y=u_2\dots u_r$, 
				\[
				P_{u_{r-1}u_r}(P_{r-1}^k)_{YX} = \begin{cases}
					c_{k+1,u,u}\delta_{k+1,u}, & 0\le k\le r-2 \\
					\delta_u P_{u_ru_1}^{k-r+2}, & k\ge r-1.
				\end{cases}
				\]
			\end{lemma}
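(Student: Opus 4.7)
The plan is to unfold the matrix power $(P_{r-1}^{k})_{YX}$ as a sum over paths in the higher block graph $G_{A_{r-1}}$, which in turn correspond to length-$(r-1+k)$ words in $\Sigma_A$, and to read off the constraints imposed by $Y=u_2\dots u_r$ at the head and $X=u_1\dots u_{r-1}$ at the tail. Concretely, any path $W_0=Y\to W_1\to\cdots\to W_k=X$ with nonzero weight corresponds to a word $a_1 a_2\cdots a_{r-1+k}$ in $\Sigma_A$ with $W_j=a_{j+1}\cdots a_{j+r-1}$, carrying weight
\[
\prod_{j=0}^{k-1}P_{a_{j+r-1}\,a_{j+r}},
\]
and the endpoint conditions are $a_i=u_{i+1}$ for $1\le i\le r-1$ and $a_{k+j}=u_j$ for $1\le j\le r-1$.

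Next I split on whether these two constraint ranges overlap. In the first case $0\le k\le r-2$, the overlap is $[k+1,r-1]$ and the compatibility conditions $u_{i+1}=u_{i-k}$ for $k+1\le i\le r-1$ read precisely $u_{k+2}\cdots u_r=u_1\cdots u_{r-k-1}$, i.e.\ $k+1\in(u,u)$. If this fails, no path exists and both sides are zero. If it holds, the admissible word is uniquely determined and its weight becomes $P_{u_r\,u_{r-k}}P_{u_{r-k}\,u_{r-k+1}}\cdots P_{u_{r-2}\,u_{r-1}}$. Multiplying by $P_{u_{r-1}u_r}$ and invoking $u_r=u_{r-k-1}$ (the $j=r-k-1$ instance of the overlap identity) lets me rewrite the product as $P_{u_{r-k-1}\,u_{r-k}}P_{u_{r-k}\,u_{r-k+1}}\cdots P_{u_{r-1}\,u_r}$, which is exactly $\delta_{k+1,u,u}$ by definition. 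The degenerate boundary $k=0$ is handled separately: $(P_{r-1}^0)_{YX}=\delta_{X,Y}$ which is $1$ iff $1\in(u,u)$, and then multiplication by $P_{u_{r-1}u_r}$ gives $\delta_{1,u,u}$.

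In the second case $k\ge r-1$, the two prescribed blocks sit at the two ends of $a_1\cdots a_{r-1+k}$ and the middle symbols $a_r,a_{r+1},\dots,a_k$ are free (subject only to the edge relations of $\Sigma_A$). The product of weights factors as
\[
\bigl(P_{u_r\,a_r}P_{a_r\,a_{r+1}}\cdots P_{a_{k-1}\,a_k}P_{a_k\,u_1}\bigr)\cdot \bigl(P_{u_1u_2}P_{u_2u_3}\cdots P_{u_{r-2}u_{r-1}}\bigr),
\]
and summing over the $k-r+1$ free symbols produces $(P^{k-r+2})_{u_r u_1}$, which in the paper's notation is $P_{u_r u_1}^{k-r+2}$. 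Multiplying through by $P_{u_{r-1}u_r}$ yields $\delta_u\,P_{u_r u_1}^{k-r+2}$, as required. Verifying the boundary $k=r-1$ separately (where the gap is empty and the formula collapses to $\delta_u P_{u_r u_1}$) gives a sanity check.

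The main obstacle I anticipate is purely bookkeeping: keeping the indexing consistent across the translation between paths in $G_{A_{r-1}}$ and blocks in $\Sigma_A$, and rearranging the weight product in Case 1 so that the factor $P_{u_{r-1}u_r}$ merges correctly with $P_{u_r u_{r-k}}$ via the substitution $u_r=u_{r-k-1}$ to reconstitute $\delta_{k+1,u,u}$. Once the correspondence between paths and words is set up clearly, both cases reduce to straightforward reindexing.
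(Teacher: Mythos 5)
Your argument is correct: the unfolding of $(P_{r-1}^k)_{YX}$ into weighted paths in $G_{A_{r-1}}$, i.e.\ into allowed words $a_1\dots a_{r-1+k}$ with prescribed first block $Y$ and last block $X$, together with the case split on whether the two prescribed ranges overlap ($0\le k\le r-2$, forcing $k+1\in(u,u)$ and a unique word of weight $\delta_{k+1,u,u}/P_{u_{r-1}u_r}$) or are disjoint ($k\ge r-1$, where summing over the free middle symbols produces the $(u_r,u_1)$ entry of $P^{k-r+2}$), gives exactly the stated identity, and your reading of $P_{u_ru_1}^{k-r+2}$ as a matrix-power entry is the one consistent with how the lemma is used in the proof of Theorem~\ref{thm:relate}. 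The paper states this lemma without proof, so there is nothing to compare against; your computation is the natural argument and is complete, up to the routine remark that in the overlapping case the uniquely determined word is indeed allowed (each of its length-two subwords is a subword of $u$, so the corresponding blocks lie in $\mathcal{L}_{r-1}$ and all weights are positive), which you use implicitly.
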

			We restate Theorem~\ref{thm:relate} below with a proof. In the following, we denote $D(z)=\text{det}(\mathbf{I}-zP)$, $A(z)=\text{adj}(\mathbf{I}-zP)$, $D_r(z)=\text{det}(\mathbf{I}-zP_r)$, $A_r(z)=\text{adj}(\mathbf{I}-zP_r)$, for $r\ge 2$. By Lemma~\ref{lemma:det-rel}, $D_r(z)=D(z)$.
			
			\begin{theorem}
				For $u=u_1\dots u_{r}$ and $\G=\{u\}$, 
				\begin{eqnarray}\label{eq:cpfu}
					\text{det}(\mathbf{I}-zB_\G\circ P_{r-1})&=&f_u(z).
				\end{eqnarray}
				Equivalently,
				\begin{eqnarray}\label{eq:toprove}
					(\tilde{\tau}_u(z)-1)D(z)+z^{r-1}\delta_u A(z)_{u_r u_1} &=& zP_{u_{r-1}u_r}A_{r-1}(z)_{YX},
				\end{eqnarray}
				where $X=u_1\dots u_{r-1}$ and $Y=u_2\dots u_{r}$.
			\end{theorem}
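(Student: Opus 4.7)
The plan is to reduce the claimed determinantal identity to~(\ref{eq:toprove}) via the rank-one determinant formula, and then to verify~(\ref{eq:toprove}) by expanding both sides as formal power series in $z$ and reading off coefficients via Lemma~\ref{lemma:powerP}. Since $\G=\{u\}$ consists of a single word of length $r$, a concatenation $V*W$ with $V,W\in\mathcal{L}_{r-1}$ can contain $u$ only by being equal to $u$, which forces $V=X=u_1\cdots u_{r-1}$ and $W=Y=u_2\cdots u_r$. Thus $B_\G\circ P_{r-1}$ differs from $P_{r-1}$ only in the single entry $(X,Y)$, which drops from $P_{u_{r-1}u_r}$ to $0$, so
\[
\mathbf{I}-zB_\G\circ P_{r-1}=(\mathbf{I}-zP_{r-1})+zP_{u_{r-1}u_r}\,\mathbf{e}_X\mathbf{e}_Y^T.
\]
Applying the rank-one formula of Lemma~\ref{lemma:det_inv_sum} together with Lemma~\ref{lemma:det-rel} yields
\[
\det(\mathbf{I}-zB_\G\circ P_{r-1})=D_{r-1}(z)+zP_{u_{r-1}u_r}A_{r-1}(z)_{YX}=D(z)+zP_{u_{r-1}u_r}A_{r-1}(z)_{YX},
\]
so that~(\ref{eq:cpfu}) becomes equivalent to identity~(\ref{eq:toprove}) once one substitutes the definition of $f_u(z)$.

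For~(\ref{eq:toprove}), I pass to formal power series in $z$. Since $D(0)=1$, the identity $A_{r-1}(z)/D(z)=(\mathbf{I}-zP_{r-1})^{-1}=\sum_{k\ge 0}z^kP_{r-1}^k$ holds in the formal power-series ring, so
\[
\frac{zP_{u_{r-1}u_r}A_{r-1}(z)_{YX}}{D(z)}=\sum_{k\ge 0}z^{k+1}P_{u_{r-1}u_r}(P_{r-1}^k)_{YX}.
\]
Invoking Lemma~\ref{lemma:powerP}, I split the sum at $k=r-2$. The head, after the index shift $s=k+1$, becomes $\sum_{s=1}^{r-1}c_{s,u,u}\delta_{s,u,u}z^s=\tau_u(z)-1$ (the $s=0$ term of $\tau_u$ equals $1$). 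The tail, after the shift $m=k-r+2$, becomes $z^{r-1}\delta_u\sum_{m\ge 1}z^m(P^m)_{u_ru_1}$, where the $(u_r,u_1)$-entry of the formal resolvent for $P$ satisfies $\sum_{m\ge 0}z^m(P^m)_{u_ru_1}=A(z)_{u_ru_1}/D(z)$, so the inner sum equals $A(z)_{u_ru_1}/D(z)-\chi_{u_r,u_1}$. Clearing $D(z)$ and collecting,
\[
zP_{u_{r-1}u_r}A_{r-1}(z)_{YX}=\bigl(\tau_u(z)-1-z^{r-1}\delta_u\chi_{u_r,u_1}\bigr)D(z)+z^{r-1}\delta_u A(z)_{u_ru_1}.
\]
Since $c_{r-1,u,u}=1\iff u_r=u_1\iff\chi_{u_r,u_1}=1$, and in that case $\delta_{r-1,u,u}=\delta_u$, the bracketed factor is exactly $\tilde{\tau}_u(z)-1$ by the definition $\tilde{\tau}_u(z)=\tau_u(z)-z^{r-1}c_{r-1,u,u}\delta_u$; this is~(\ref{eq:toprove}).

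The main obstacle is the index bookkeeping when splitting and re-indexing the series. The boundary $k=r-2$ must be chosen so that the head reproduces \emph{all} of $\tau_u(z)-1$ up to degree $r-1$, and one must recognise that the boundary contribution $-z^{r-1}\delta_u\chi_{u_r,u_1}$ coming from subtracting the $m=0$ term of the tail's resolvent expansion is precisely the single overlap correction that converts $\tau_u$ into $\tilde{\tau}_u$. A secondary care point is to read the notation $P_{u_ru_1}^{k-r+2}$ in Lemma~\ref{lemma:powerP} as the $(u_r,u_1)$-entry of the matrix power $P^{k-r+2}$ (not as a scalar power), which is what its combinatorial derivation actually yields; with this interpretation the remainder of the argument is routine formal algebra.
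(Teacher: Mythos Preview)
Your argument is correct. The reduction to~(\ref{eq:toprove}) via the rank-one determinant formula and Lemma~\ref{lemma:det-rel} is exactly what the paper does, and the split of the series at $k=r-2$ together with Lemma~\ref{lemma:powerP} is also the same. Your reading of $P_{u_ru_1}^{k-r+2}$ as $(P^{k-r+2})_{u_ru_1}$ is the intended one; the paper implicitly confirms this when it passes to the matrix identity $\sum\alpha_{j+k+1}z^{N^{r-1}-j-r+1}P^{k-r+2}=A(z)-D(z)\mathbf{I}$.

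Where you genuinely diverge from the paper is in how you handle the adjugate. The paper invokes Lemma~\ref{lemma:det-adj} to write $A_{r-1}(z)$ as a \emph{finite} double sum $\sum_{0\le j+k\le N^{r-1}-1}\alpha_{j+k+1}z^{N^{r-1}-1-j}P_{r-1}^k$, tracks the coefficients $\alpha_i$ through Lemma~\ref{lemma:det-rel}, and then uses Cayley--Hamilton ($\sum_K\beta_KP^K=\mathbf{O}$) to kill an entire block of the tail before reassembling $A(z)-D(z)\mathbf{I}$. You instead divide through by $D(z)$ and use the formal resolvent expansion $(\mathbf{I}-zM)^{-1}=\sum_{k\ge0}z^kM^k$, which is legitimate since $D(0)=1$, and which makes the tail collapse immediately to $z^{r-1}\delta_u\bigl(A(z)_{u_ru_1}/D(z)-\chi_{u_r,u_1}\bigr)$. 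Your route is shorter and avoids Lemma~\ref{lemma:det-adj} and the explicit Cayley--Hamilton step altogether; the paper's route has the minor advantage of staying within polynomials throughout, but at the cost of heavier bookkeeping with the indices $j,k,J,K$.
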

			
			\begin{proof}
				Since $B_\G\circ P_{r-1}=P_{r-1}-(P_{r-1})_{XY}\mathbf{e}_X\mathbf{e}_Y^T$, the characteristic polynomial of $B_\G\circ P_{r-1}$ is given by
				\begin{eqnarray*}
					\text{det}(\mathbf{I}-zB_\G\circ P_{r-1}) &=& \text{det}(\mathbf{I}-zP_{r-1}+zP_{u_{r-1}u_r}\mathbf{e}_X\mathbf{e}_Y^T)\\
					&=& D_{r-1}(z)+zP_{u_{r-1}u_r}(\text{adj}(z\mathbf{I}-P_{r-1}))_{YX}\\
					&=& D(z)+zP_{u_{r-1}u_r}A_{r-1}(z)_{YX}.
					%	&=& D(z)+zP_{XY}(\text{adj}(zI-P_{r-1}))_{YX},
				\end{eqnarray*}
				%where $X=u_1\dots u_{r-1}$ and $Y=u_2\dots u_r$.
				We have used Lemma~\ref{lemma:det-rel} in the last step. Thus~\eqref{eq:cpfu} and~\eqref{eq:toprove} are equivalent. \\
				Suppose the characteristic polynomial of the matrix $P$ is $\sum_{i=0}^N \beta_{i}z^{i}$. Then $D(z)=\sum_{i=0}^N \beta_{i}z^{N-i}$. Let $D_{r-1}(z)=\sum_{i=0}^{N^{r-1}}\alpha_{i}z^{N^{r-1}-i}$. By Lemma~\ref{lemma:det-rel}, we get
				\[
				\alpha_i=\begin{cases}
					0, & i\le N^{r-1}-N-1,\\
					\beta_{i-N^{r-1}+N}, &  N^{r-1}-N\le i\le N^{r-1}.
				\end{cases}	
				\]
				Hence by Lemma~\ref{lemma:det-adj},
				\begin{eqnarray*}
					zP_{u_{r-1}u_r}A_{r-1}(z)_{YX} &=& zP_{u_{r-1}u_r}\sum_{0\le j+k\le N^{r-1}-1} \alpha_{j+k+1}z^{N^{r-1}-1-j}(P_{r-1}^k)_{YX} \\
					&=&  P_{u_{r-1}u_r}\sum_{N^{r-1}-N-1\le j+k\le N^{r-1}-1} \alpha_{j+k+1}z^{N^{r-1}-j}(P_{r-1}^k)_{YX},
				\end{eqnarray*}
				since $\alpha_{j+k+1}=0$, for all $0\le j+k\le N^{r-1}-N-2$.
				%	\begin{figure}[h!]
					%		\includegraphics[width=0.6\textwidth]{image}
					%		\caption{Regions I, II and III}
					%		\label{fig:region}
					%	\end{figure} 
				%	We split the summation on the right into three parts for $(j,k)$ lying in the three shaded regions I, II, and III, as shown in Figure~\ref{fig:region}. Let us look at the summation over $(j,k)$ lying in region I. Observe that 
				We first split the summation on the right into two parts; $0\le k\le r-2$ and $k\ge r-1$ in order to use Lemma~\ref{lemma:powerP}. We first look at the part of summation where $0\le k\le r-2$. We get,  
				\begin{eqnarray*}
					&&P_{u_{r-1}u_r}\sum_{k=0}^{r-2}(P_{r-1}^k)_{YX}\sum_{j=N^{r-1}-N-1-k}^{N^{r-1}-1-k}
					\alpha_{j+k+1}z^{N^{r-1}-j}\\
					%	&& \ \ =\sum_{k=0}^{r-2}c_{k+1,u,u}\delta_{k+1,u}\sum_{j=N^{r-1}-N-1-k}^{N^{r-1}-1-k} \alpha_{j+k+1}z^{N^{r-1}-j}\\
					&& = \sum_{k=0}^{r-2}c_{k+1,u,u}\delta_{k+1,u}\sum_{j=N^{r-1}-N-1-k}^{N^{r-1}-1-k}\beta_{j+k+1-N^{r-1}+N}z^{N^{r-1}-j}\\
					&& = \sum_{k=0}^{r-2}c_{k+1,u,u}\delta_{k+1,u}z^{k+1}\sum_{J=0}^{N}\beta_{J}z^{N-J}\\
					&& =D(z) \left(\tau_u(z)-1\right)\\
					&& =D(z) \left(\tilde{\tau}_u(z)-1\right)+c_{r-1,u,u}\delta_uz^{r-1}D(z),
				\end{eqnarray*}
				%The terms corresponding to each fixed $k$ with $0\le k\le r-2$ is
				%\begin{eqnarray*}
				%	c_{k+1,u,u}\delta_{k+1,u}\sum_{j=N^{r-1}-N-1-k}^{N^{r-1}-1-k}\beta_{j+k+1-N^{r-1}+N}z^{N^{r-1}-1-j}&=& z^kc_{k+1,u,u}\delta_{k+1,u}\sum_{J=0}^{N}\beta_{J}z^{N-j}\\
				%	&=& z^kc_{k+1,u,u}\delta_{k+1,u} D(z),
				%\end{eqnarray*}
				using the change of variable $J=j-N^{r-1}+N+1+k$. %Hence 
				%\[
				%P_{u_{r-1}u_r}\sum_{(j,k)\in I} \alpha_{j+k+1}z^{N^{r-1}-1-j}(P_{r-1}^k)_{YX}= D(z) \left(\tilde{\tau}_u(z)-1\right).
				%\]
				%Thus, we have recovered the first term on the left-hand side of~\eqref{eq:toprove}. We now need to prove that 
				%\[
				% = .
				%\]
				Thus to obtain the required identity~\eqref{eq:toprove}, we are left to show that the second summation for $k\ge r-1$ satisfies the following:
				\begin{eqnarray*}
					&&P_{u_{r-1}u_r}\sum_{\substack{N^{r-1}-N-1\le j+k\le N^{r-1}-1\\ k\ge r-1}} \alpha_{j+k+1}z^{N^{r-1}-j}(P_{r-1}^k)_{YX}\\
					&& =z^{r-1}\delta_u A(z)_{u_ru_1}-c_{r-1,u,u}\delta_uz^{r-1}D(z).      
				\end{eqnarray*}
				Using Lemma~\ref{lemma:powerP}, this is equivalent to proving
				\begin{eqnarray*}
					\sum_{\substack{N^{r-1}-N-1\le j+k\le N^{r-1}-1\\ k\ge r-1}} \alpha_{j+k+1}z^{N^{r-1}-j-r+1}P_{u_ru_1}^{k-r+2}&=& A(z)_{u_ru_1}-c_{r-1,u,u}D(z).      
				\end{eqnarray*}
				Since $c_{r-1,u,u}=\chi_{u_1,u_r}$, and $u_1,u_r$ are arbitrary, we prove a more general statement for matrices: 
				\begin{eqnarray*}
					\sum_{\substack{N^{r-1}-N-1\le j+k\le N^{r-1}-1\\ k\ge r-1}} \alpha_{j+k+1}z^{N^{r-1}-j-r+1}P^{k-r+2}&=& A(z)-D(z)I.      
				\end{eqnarray*}
				%Let us now consider the terms $I_2+I_3$:
				%\begin{eqnarray*}
				%	I_2+I_3&=& P_{u_{r-1}u_r}\sum_{\substack{N^{r-1}-N-1\le j+k\le N^{r-1}-1,\\ k\ge r-1}} \alpha_{j+k+1}z^{N^{r-1}-1-j}(P_{r-1}^k)_{YX}\\
				%	&=& \sum_{\substack{N^{r-1}-N-1\le j+k\le N^{r-1}-1,\\ k\ge r-1}} \alpha_{j+k+1}z^{N^{r-1}-1-j}\delta_uP_{u_ru_1}^{k-r+2}\\
				%	&=& \delta_u z^{r-1} \sum_{\substack{N^{r-1}-N-1\le j+k\le N^{r-1}-1,\\ k\ge r-1}} \alpha_{j+k+1}z^{N^{r-1}-r-j}P_{u_ru_1}^{k-r+2}.
				%\end{eqnarray*}
				%\[
				%A(z)_{u_ru_1}= \sum_{\substack{N^{r-1}-N-1\le j+k\le N^{r-1}-1 \\ k\ge r-1}}\alpha_{j+k+1}z^{N^{r-1}-r-j+1}P_{u_ru_1}^{k-r+2} .
				%\]
				We break the summation on the left into two parts:
				\[\sum_{j=0}^{N^{r-1}-N-r}\sum_{k=N^{r-1}-N-1-j}^{N^{r-1}-1-j}+\sum_{j=N^{r-1}-N-r+1}^{N^{r-1}-r}\sum_{k=r-1}^{N^{r-1}-1-j}.
				\]
				We will prove that the first part is equal to the zero matrix $\mathbf{O}$. Indeed we have for each $0\le j\le N^{r-1}-N-r$,
				%	\[
				%	\sum_{(j,k)\in II} \alpha_{j+k+1}z^{N^{r-1}-r-j}P^{k-r+2}=\mathbf{O}.
				%	\]
				%	Observe that
				%	\begin{eqnarray*}
					%		&& \sum_{(j,k)\in II} \alpha_{j+k+1}z^{N^{r-1}-r-j}P^{k-r+2}\\
					%		&=& \sum_{j=0}^{N^{r-1}-N-r}z^{N^{r-1}-r-j}\sum_{N^{r-1}-N-1-j\le k\le N^{r-1}-1-j} \alpha_{j+k+1}P^{k-r+2}
					%	\end{eqnarray*}
				%	The term corresponding to each fixed $j$ with $0\le j\le N^{r-1}-N-r$ is
				\[\sum_{k=N^{r-1}-N-1-j }^{N^{r-1}-1-j} \alpha_{j+k+1}P^{k-r+2} =P^{N^r-N-r+1-j}\sum_{K=0}^N \beta_K P^K = \mathbf{O},
				\]
				using the change of variable $K=j+k+1-N^{r-1}+N$. \\
				%We have used the fact that $\sum_{0\le K\le N} \beta_K z^K=\mathbf{O}$. 
				Now consider the second part:
				%This follows just by translating the region III to the origin as follows. Consider
				\begin{eqnarray*}
					&& \sum_{j=N^{r-1}-N-r+1}^{N^{r-1}-r}\sum_{k=r-1}^{N^{r-1}-1-j}z^{N^{r-1}-r-j+1} \alpha_{j+k+1}P^{k-r+2} \\
					&&  \ \ = \sum_{J=0}^{N-1}\sum_{K=0}^{N-1-J}\beta_{J+K+1}z^{N-J}P^{K+1}\\
					&&    \ \ = zA(z)P= A(z) - D(z)I,
				\end{eqnarray*}
				using the change of variables $J=j-(N^{r-1}-N-r+1)$ and $K=k-r+1$ and Lemma~\ref{lemma:det-adj}. Thus we have proved the required identity.
			\end{proof}

			\section{Concluding remarks}
			This paper gives two methods to compute the escape rate into a Markov hole. The first method describes escape rate in terms of the spectral radius of a specific matrix obtained using the higher block representation of the shift. The other method gives escape rate in terms of the smallest real positive pole of a certain rational function obtained using recurrence relations. Each method has limitations and advantages, as discussed in Section~\ref{sec:advantages}. We make a few observations now. For a shift on two symbols, for two holes corresponding to words $u=u_1\dots u_r$ and $v=v_1\dots v_r$ of the same length and with $\delta_u P_{u_ru_1}=\delta_v P_{v_rv_1}$, by a result in~\cite{BCL}, if $u$ and $v$ are both prime, then $\rho(C_u)=\rho(C_v)$. This fails to hold true if the symbol set has a size of at least 3, as illustrated below. Let 
			\[
			P=\begin{pmatrix}
				0.35 & 0.3 & 0.35\\ 0.3 & 0.4 & 0.3 \\ 0.35& 0.3 & 0.35
			\end{pmatrix}
			\]
			be a doubly stochastic matrix defining the Markov measure on the subshift. Consider two holes corresponding to prime words $u=112$ and $v=321$. Simple calculations show that, the escape rate into the holes are $\rho(C_u)=\ln (1.03786)$ and $\rho(C_v)=\ln (1.03189)$. Here $\delta_uP_{21}=\delta_v P_{13}$, but escape rates are different.
			%The measure of these holes are $\mu(u)=0.035$ and $\mu(v)=0.03$
			
			Moreover, by a result in~\cite{BCL}, if the subshift has a product measure, for two holes corresponding to two prime words $u$ and $v$ of the same length and same measure, we have $\rho(C_u)=\rho(C_v)$. This fails if the underlying Markov measure is not a product measure. For instance, let 
			\[
			P'=\begin{pmatrix}
				0.1 & 0.25 & 0.3 & 0.35\\ 0.25&0.15&0.4&0.2\\ 0.3& 0.4 & .05 & 0.25\\ 0.35 & 0.2 & 0.25 & 0.2
			\end{pmatrix}\] 
			be the doubly stochastic matrix defining the Markov measure $\mu=\mu_P$ on the subshift. Consider two holes corresponding to prime words $u=12$ and $v=34$. Here $\mu(u)=\mu(v)$ but $\rho(C_u)\ne\rho(C_v)$.
			
			These two examples above demonstrate that the patterns and relations for escape rates of various holes observed in the simple situations in a shift on two symbols or with a product measure fail in general. Therefore, it becomes extremely challenging to gauge any possible pattern in the case of the symbol set of large size and for any Markov measure. Supported by numerical results and preliminary analysis, we conjecture the following. Suppose $u$ and $v$ are two prime words of the same length. If $\mu(u) < \mu(v)$, then $\rho(C_u) < \rho(C_v)$. The following example shows that if we let go of the assumption that $u$ and $v$ are both prime words, then the conjecture fails. Let the Markov measure on the subshift be defined by the doubly stochastic matrix $P$ as above. Consider two holes corresponding to words $u=12$ and $v=22$. The measures of these holes satisfy $\mu(u)<\mu(v)$. However, the escape rates into the holes satisfy $\rho(C_u)>\rho(C_v)$. In this case, $u$ is prime, and $v$ is not.

			\bibliographystyle{plain} 
			\bibliography{ref}
		\end{document}